\documentclass[11pt,reqno]{amsart}
\usepackage{fullpage}

\usepackage{graphicx}
\usepackage[draft]{hyperref}
\usepackage{amsmath,amsopn,amssymb,amsfonts,stmaryrd}
\usepackage{verbatim}
\usepackage{amsthm}
\usepackage{mathtools}
\usepackage{color}
\usepackage{enumitem}
\usepackage[framemethod=TikZ]{mdframed}
\usepackage{bbm}
\usepackage{mathrsfs}
\usepackage{booktabs}
\usepackage{caption}
\usepackage{cleveref}
\usepackage{bm}
%\pagestyle{headings}
%%Tabelle
\usepackage{makecell}
%%geschwungener Pfeil
\usepackage{ latexsym }
\usepackage{cases}

\theoremstyle{plain}
\newtheorem*{remark}{Remark}

\newtheorem{theorem}{Theorem}[section]
\newtheorem{corollary}[theorem]{Corollary}

\newtheorem{lemma}[theorem]{Lemma}

\theoremstyle{definition}

\theoremstyle{remark}

%Reals
%non-zero real
%positive real
%Complex
%non-zero complex
%Projective
\newcommand{\Z}{{\mathbb Z}}%Integers
%Positive integers
%Negative integers
%non-zero integers
\newcommand{\N}{{\mathbb N}}%Natural numbers
%Rationals
%non-zero rationals
%positive rationals
%Algebraic closure of the rationals
%Octonions
%quaternions
%finite field
\newcommand{\R}{\mathbb{R}}

\newcommand{\C}{\mathbb{C}}
\newcommand{\E}{\mathbb{E}}
\renewcommand{\H}{\mathbb{H}}

\newcommand{\calC}{\mathcal{C}}

\newcommand{\calE}{\mathcal{E}}

\newcommand{\calK}{\mathcal{K}}

\newcommand{\calU}{\mathcal{U}}
\newcommand{\calV}{\mathcal{V}}

\newcommand{\abcd}{\begin{pmatrix}a&b\\c&d\end{pmatrix}}
\newcommand{\pabcd}{\left(\begin{smallmatrix}a&b\\c&d\end{smallmatrix}\right)}

\renewcommand{\Im}{{\rm Im}}

\newcommand{\sgn}{\mbox{sgn}}

\newcommand{\SL}{\mathrm{SL}}

\newcommand{\Arg}{\mathrm{Arg}}

\newcommand{\ol}[1]{\overline{#1}}

\newcommand{\zz}{\mathfrak{z}}

\newcommand{\pd}[2]{\frac{\partial^#1}{\partial #2^#1}}

% orientifold planes and D-branes

%%%%%%%%%%%% MACROS  MACROS   MACROS %%%%%%%%%

%\theoremstyle{plain}
%\newtheorem{thm}{Theorem}[section]
%\newtheorem{cor}[thm]{Corollary}
%\newtheorem{lem}[thm]{Lemma}
%\newtheorem{prop}[thm]{Proposition}
%\newtheorem{conj}[thm]{Conjecture}
%\newtheorem{fact}[thm]{Fact}
%\newtheorem{facts}[thm]{Facts}

%\theoremstyle{definition}
%\newtheorem{defn}[thm]{Definition}
%\newtheorem*{rem}{Remark}
%\newtheorem*{claim}{Claim}
%\newtheorem*{bsp}{Example}
%%\renewcommand\qedsymbol{$\blacksquare$}
\numberwithin{equation}{section}

\newsavebox{\mycases}% Store case "title" and brace

\allowdisplaybreaks

\newcommand{\pmat}[1]{\left( \smallmatrix #1 \endsmallmatrix \right)}
\newcommand{\mat}[1]{\left( \begin{matrix} #1 \end{matrix} \right)}

\newcommand{\flo}[1]{\lfloor #1\rfloor}

\renewcommand{\sgn}{\textnormal{sgn}}

%%%%%%%  Greek letters %%%%%%%%%%%%%%%%%%
\def\a{\alpha}
\def\b{\beta}
\def\d{\delta}

\def\k{\kappa}
\def\l{\lambda}

\def\vp{\varrho}

\def\z{\zeta}

\def\vth{\vartheta}

\def\e{\varepsilon}

\def\g{\gamma}
\def\t{\tau}

\def\TH{\Theta}

\def\del{  \partial}

\newcommand{\rank}{\operatorname{rank}}

\newcommand{\coeff}{\operatorname{coeff}}

\newcommand{\re}{{\rm Re}}
\newcommand{\im}{{\rm Im}}

\renewcommand{\sgn}{{\rm sgn}}

\def\wh{\widehat}

%%%%%%%%%%%%Ferres-Diagram

%%%%%%%%%%%%%%%%%%%%%%%%%%

\setlist[itemize]{noitemsep, topsep=0pt}

\makeatletter
\newcommand{\vast}{\bBigg@{2}}
\newcommand{\Vast}{\bBigg@{5}}
\makeatother

\renewcommand{\pmod}[1]{\   \left(  \mathrm{mod} \,  #1 \right)}
\newcommand{\Pmod}[1]{\    (  \mathrm{mod} \,  #1 )}
\newcommand{\erf}{{\mathrm{erf}}}

%\externaldocument[paper:]{"UnimodalRankMoments131"}
%\externaldocument[general:]{"General_transformation_law_54"}
%\externaldocument[res:]{"Results_07"}

% latexdiff -t UNDERLINE old.tex new.tex > diff.tex pdflatex diff.tex
%\externaldocument{UnimodalRankMoments116.pdf}

\title{Taylor coefficients of false Jacobi forms and ranks of unimodal sequences}

\thanks{This project has received funding from the European Research Council (ERC) under the European Union's Horizon 2020 research and innovation programme (grant agreement No. 101001179).  The authors were partially supported by the supported by the SFB/TRR 191 ``Symplectic Structures in Geometry, Algebra and Dynamics'', funded by the DFG (Projektnummer 281071066 TRR 191).}

\keywords{partition, unimodal sequence, rank, false theta function, Jacobi form, Circle Method}

\author{Walter Bridges and Kathrin Bringmann}

\subjclass[2020]{11P82,  11F50}

\begin{document}
\maketitle

\begin{abstract}
	 We apply the new framework for modularity of false theta functions developed by the second author and Nazaroglu to study the asymptotic behavior of Taylor coefficients of false Jacobi forms. The examples we study generate moments of the rank for unimodal sequences. For two types of unimodal sequences, we prove asymptotic series for the rank moments.
\end{abstract}

\section{Introduction and statement of results}\label{sec:intro}

In the theory of partitions, there are many generating functions $F(\z;q)$ of combinatorial interest that factor as 
\begin{equation}\label{E:genfnfactor}
	F(\zeta;q)=\text{Jacobi form} \cdot  \text{partial/false Jacobi theta function}  + \text{sparse series}.
\end{equation}
Setting $\z=e^{2\pi iz}$, we wish to study Taylor coefficients in $z$ of such $F(\z;q)$, themselves $q$-series, and we would like to describe the growth of the coefficients of $q^n$ as $n\to\infty$. That is, we would like a method to estimate
\[
	\coeff_{\left[q^n\right]} \frac{1}{(2\pi i)^\ell}\left[\frac{\del^\ell}{\del z^\ell} F(\z;q)\right]_{z=0},\qquad \text{as }n\to\infty.
\]
As modular (and mock modular) Jacobi forms are well understood, the primary difficulty is to understand the contribution from the partial/false theta function. In the present work, we extend the recent framework for the modularity of false theta functions due to the second author and Nazaroglu \cite{CN} to accommodate two examples. We prove two variable transformations for false theta functions, and we use the Hardy--Ramanujan Circle Method to  obtain asymptotic series.

\subsection{False theta functions}

{\it False theta functions} appear similar to classical theta functions but have different sign factors that prevent them from being modular forms. Consider in particular the false theta function\footnote{Note that in \cite{CN} a slightly different definition of $\psi$ was employed, where $\sgn(m)$ was used instead of $\sgn(m+\frac{z_2}{\t_2})$. Note that for $|\frac{z_2}{\t_2}|<\frac12$ these definitions agree.} ($\t=\t_1+i\t_2\in\H$, $z=z_1+iz_2\in\C$) 
\[
	\psi(z;\t) := i\sum_{m\in\Z+\frac12} \sgn\left(m+\frac{z_2}{\t_2}\right)(-1)^{m-\frac12}q^\frac{m^2}{2}\z^m \qquad \left(q:=e^{2\pi i\t},\z:=e^{2\pi iz},w\in\H\right).
\]
In recent work \cite{CN} the second author and Nazaroglu defined the following completion of $\psi$,
\[
	\wh\psi(z;\t,w) := i\sum_{m\in\Z+\frac12} \erf\left(-i\sqrt{\pi i(w-\t)}\left(m+\frac{z_2}{\t_2}\right)\right) (-1)^{m-\frac12} q^\frac{m^2}{2} \z^m,
\]
where $\erf(x):=\frac{2}{\sqrt\pi}\int_0^xe^{-t^2}dt$ is the {\it error function}, and where here and throughout we take the principal branch of the square-root. That $\wh\psi$ is a ``completion'' of $\psi$ means that $\psi$ occurs as limiting value of $\wh\psi$ (see \eqref{limit}) and $\wh\psi$ transforms like a Jacobi form.

\begin{theorem}[\cite{CN}, Theorem 2.3]\label{T:CNtrans}
	Let $z\in\C$ and $\t,w\in\H$. Then for $\pabcd\in\SL_2(\Z)$, we have
	\[
		\wh\psi(z;\t,w) = \e_{\t,w}\abcd^{-1}\chi\abcd^{-3} (c\t+d)^{-\frac12} e^{-\frac{\pi icz^2}{c\t+d}} \wh\psi\left(\frac{z}{c\t+d};\frac{a\t+b}{c\t+d},\frac{aw+b}{cw+d}\right),
	\]
	where $\e_{\t,w}\pabcd$ is defined in \eqref{E:epsilonbranchcutfactor} and $\chi$ is given in \eqref{E:etamultiplier}.
\end{theorem}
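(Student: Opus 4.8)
\emph{Proof strategy.} Since $S:=\left(\begin{smallmatrix}0&-1\\1&0\end{smallmatrix}\right)$ and $T:=\left(\begin{smallmatrix}1&1\\0&1\end{smallmatrix}\right)$ generate $\SL_2(\Z)$, the plan is to prove the asserted transformation for $S$ and $T$ and then to extend it to all of $\SL_2(\Z)$ by verifying that the automorphy factor $\e_{\t,w}\pabcd^{-1}\chi\pabcd^{-3}(c\t+d)^{-\frac12}e^{-\frac{\pi i cz^2}{c\t+d}}$ satisfies the relevant cocycle relation under composition; this reduces to the relations $S^2=(ST)^3=-I$, the known multiplier system of $\eta$ (which fixes $\chi$), and a consistency check for the principal branches of the square roots involved (which is precisely what $\e_{\t,w}$ encodes).

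The case of $T$ is a direct computation. Under $\t\mapsto\t+1$, $w\mapsto w+1$ the quantities $\t_2$ and $z_2$ are unchanged, so the error-function argument $-i\sqrt{\pi i(w-\t)}\left(m+\frac{z_2}{\t_2}\right)$ is unchanged, while $q^{\frac{m^2}{2}}=e^{\pi i\t m^2}$ acquires the factor $e^{\pi im^2}=e^{\frac{\pi i}{4}}$, which is independent of $m\in\Z+\frac12$ since $m^2-\frac14\in2\Z$. Thus $\wh\psi(z;\t+1,w+1)=e^{\frac{\pi i}{4}}\wh\psi(z;\t,w)$, matching the right-hand side (here $c=0$, $d=1$) once $\chi$ is taken to be the appropriate power of the $\eta$-multiplier, so that $\chi(T)^{-3}=e^{\frac{\pi i}{4}}$ up to a sign carried by $\e_{\t,w}(T)$.

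The case of $S$ is the substantive one, and I expect the main obstacle to be controlling the analytic continuation and the branches while carrying it out. A natural route is to rewrite each error function as an integral: since $\erf(0)=0$ and $\frac{\partial}{\partial w}\erf\left(-i\sqrt{\pi i(w-\t)}\,a\right)=\frac{a}{\sqrt{i(w-\t)}}e^{\pi i(w-\t)a^2}$, one gets $\erf\left(-i\sqrt{\pi i(w-\t)}\,a\right)=\int_\t^w\frac{a}{\sqrt{i(v-\t)}}e^{\pi i(v-\t)a^2}\,dv$, and interchanging the sum with the integral yields
\[
	\wh\psi(z;\t,w)=\int_\t^w\Theta(z;\t,v)\,dv,\qquad \Theta(z;\t,v):=i\sum_{m\in\Z+\frac12}\frac{m+\frac{z_2}{\t_2}}{\sqrt{i(v-\t)}}\,e^{\pi i(v-\t)\left(m+\frac{z_2}{\t_2}\right)^2}(-1)^{m-\frac12}q^{\frac{m^2}{2}}\z^m.
\]
Completing the square in $m$ exhibits $\Theta(z;\t,v)$, up to elementary factors, as the $z$-derivative of a classical Jacobi theta function of weight $\tfrac32$ in the variable $v$. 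Substituting $v\mapsto-1/v$ in the integral — which carries the endpoints $\t,w$ to $-1/\t,-1/w$ — while invoking the modular transformation of that theta function (and of the normalizing factor $(i(v-\t))^{-\frac12}$) under $v\mapsto-1/v$, $\t\mapsto-1/\t$ together with the change of elliptic variable $z\mapsto z/\t$, one should recover $\int_{-1/\t}^{-1/w}\Theta(z/\t;-1/\t,v')\,dv'=\wh\psi(z/\t;-1/\t,-1/w)$ times an explicit prefactor built from the automorphy factors of the theta kernel and of $\sqrt{i(v-\t)}$. The delicate point — where essentially all of the work lies — is to check that the principal branches of $\sqrt{i(v-\t)}$, of the $\sqrt{v}$ produced by the theta transformation, and of $\sqrt{c\t+d}$ combine exactly into $\e_{\t,w}(S)\chi(S)^{-3}(c\t+d)^{-\frac12}$; that branch bookkeeping, together with the cocycle verification from the first paragraph, completes the argument, the remaining input being the classical modularity of $\vartheta$.
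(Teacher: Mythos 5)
This statement is quoted from \cite{CN} (Theorem 2.3) and is not proved in the present paper, so there is no in-house argument to compare against; I can only measure your sketch against the original proof. Your route is in fact the same one \cite{CN} takes: reduce to the generators $S,T$, handle $T$ by inspection, and handle $S$ by writing $\wh\psi(z;\t,w)=\int_\t^w\Theta(z;\t,v)\,dv$ with the weight-$\frac32$ theta kernel $\Theta$ (your $\Theta$ is exactly the $w$-derivative of $\wh\psi$ that this paper itself computes in the proof of Theorem \ref{T:psitrans}), then transport the integral under $v\mapsto-1/v$ using the modularity of the kernel. The $T$ computation is essentially complete; note only that $\chi\pmat{1&1\\0&1}^{-3}=e^{-\pi i/4}$, not $e^{\pi i/4}$, which is exactly what the identity as stated requires since the factor multiplies $\wh\psi(z;\t+1,w+1)$, and $\e_{\t,w}\pmat{1&1\\0&1}=1$, so there is no sign left for $\e_{\t,w}$ to absorb.

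The gap is that the $S$ case --- which is the entire content of the theorem --- is left as a program. Two points there are not routine and are precisely where an attempt can silently fail. First, after completing the square the kernel is not the $z$-derivative of a classical Jacobi form in any naive sense: it is the combination $\sum_m\bigl(m+\frac{z_2}{\t_2}\bigr)(-1)^{m-\frac12}e^{\pi im^2v+2\pi im(z+(v-\t)\frac{z_2}{\t_2})}$, a mix of $\vartheta$ and $\partial_u\vartheta$ evaluated at the non-holomorphic argument $u=z+(v-\t)\frac{z_2}{\t_2}$. For the transformed integral to reassemble into $\wh\psi\bigl(\frac{z}{c\t+d};\frac{a\t+b}{c\t+d},\frac{aw+b}{cw+d}\bigr)$ one must verify that $\frac{z_2}{\t_2}$ and the shifted argument $u$ transform covariantly, i.e.\ that $\frac{z_2}{\t_2}$ is carried to $\Im\bigl(\tfrac{z}{c\t+d}\bigr)/\Im\bigl(\tfrac{a\t+b}{c\t+d}\bigr)$ and that the quadratic exponential prefactors recombine into $e^{-\frac{\pi icz^2}{c\t+d}}$; this is a genuine computation, not an invocation of the classical $\vartheta$-law. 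Second, the branch bookkeeping that produces the specific factor $\e_{\t,w}\pmat{a&b\\c&d}$ of \eqref{E:epsilonbranchcutfactor}, and the verification that this factor satisfies the cocycle identity needed to pass from $S$ and $T$ to all of $\SL_2(\Z)$ (including consistency at $-I$, using that $\wh\psi$ is even in $z$), are asserted rather than carried out. So the strategy is the right one and would succeed, but as written it identifies the difficulties without resolving them.
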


From the modularity of $\widehat{\psi}$ we derive a transformation law for $\psi$ requiring the Eichler integrals
\begin{equation*}\label{E:MordellEdefinition}
	\calE_\frac ac(z;\t) := e^{-\frac{\pi iaz_2^2}{c\t_2^2}}\int_\frac ac^{\t+i\infty+\e} e^\frac{\pi iz_2^2\zz}{\t_2^2} \frac{\sum_{m\in\Z+\frac12} (-1)^{m-\frac12}\left(m+\frac{z_2}{\t_2}\right)e^{\pi i\left(m^2\zz+2m\left(z+(\zz-\t)\frac{z_2}{\t_2}\right)\right)}}{\sqrt{i(\zz-\t)}} d\zz.
\end{equation*}
The following theorem extends Lemma 3.1 of \cite{CN} which treats the case $z=0$.
\begin{theorem}\label{T:psitrans}
	For $z\in\C$, we have, for $\pmat{a&b\\c&d}\in\SL_2(\Z)$ with $c>0$,
	\begin{multline*}
		\psi(z;\t) = \chi\abcd^{-3} (c\t+d)^{-\frac12} e^{-\frac{\pi icz^2}{c\t+d}}\\
		\times \left(\psi\left(\frac{z}{c\t+d};\frac{a\t+b}{c\t+d}\right) - ie^{\frac{\pi i}{c(c\t+d)}\left(\frac{\im\left(\frac{z}{c\t+d}\right)}{\im\left(\frac{a\t+ d}{c\t+d}\right)}\right)^2} \calE_\frac ac\left(\frac{z}{c\t+d};\frac{a\t+b}{c\t+d}\right)\right).
	\end{multline*}
\end{theorem}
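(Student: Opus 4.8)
The plan is to deduce Theorem~\ref{T:psitrans} from the modular transformation of the completion in Theorem~\ref{T:CNtrans} by sending the auxiliary variable $w$ to $i\infty$. By the limiting relation recalled above (see \eqref{limit}), $\psi(z;\t)=\lim_{w\to i\infty}\wh\psi(z;\t,w)$, the limit being taken along $w=\t+it$, $t\to\infty$, so that the argument of each error function tends to $\pm\infty$ and $\erf$ degenerates to $\sgn$. Fixing $\pabcd\in\SL_2(\Z)$ with $c>0$ and passing to this limit in Theorem~\ref{T:CNtrans}: the left side becomes $\psi(z;\t)$; the branch factor $\e_{\t,w}\pabcd^{-1}$ tends to $1$, since for $w$ far up there is no branch ambiguity in the $w$-variable; and on the right side the third argument $\frac{aw+b}{cw+d}$ tends to the rational number $\frac ac$ (from within $\H$, with imaginary part going to $0^+$, since $\frac{aw+b}{cw+d}-\frac ac=\frac{-1}{c(cw+d)}$). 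Hence
\[
	\psi(z;\t)=\chi\pabcd^{-3}(c\t+d)^{-\frac12}e^{-\frac{\pi icz^2}{c\t+d}}\lim_{w\to\frac ac}\wh\psi\left(\frac{z}{c\t+d};\frac{a\t+b}{c\t+d},w\right),
\]
and everything reduces to evaluating this boundary limit, where $z':=\frac{z}{c\t+d}$ and $\t':=\frac{a\t+b}{c\t+d}$.

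The point is that $\wh\psi(z';\t',w)$ extends continuously to $w$ near the real line (for $\im w<\im\t'$ the quantity $\pi i(w-\t')$ stays off the negative real axis, so the principal square root, and with it every summand, is well defined), and on the real line it differs from $\psi(z';\t')$ by an Eichler integral. Indeed, writing $\erf(x)=\sgn(x)-\sgn(x)\erfc(|x|)$ and inserting the integral representation of $\erfc$ (equivalently, invoking the $w$-integral formula for $\wh\psi-\psi$ already contained in \cite{CN}) gives
\[
	\wh\psi(z';\t',w)=\psi(z';\t')+(\text{prefactor})\int_w^{\t'+i\infty}\frac{\sum_{m\in\Z+\frac12}(-1)^{m-\frac12}\left(m+\frac{\im z'}{\im\t'}\right)e^{\pi i(\ldots)}}{\sqrt{i(\zz-\t')}}\,d\zz,
\]
whose integrand is exactly that appearing in the definition of $\calE_{a/c}$. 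Letting $w\to\frac ac$ replaces the lower endpoint by $\frac ac$; pulling out the Gaussian prefactors (the factor $e^{-\pi ia(\im z')^2/(c(\im\t')^2)}$ built into $\calE_{a/c}$, together with the residual exponential), and using $\im\t'=\t_2/|c\t+d|^2$ to rewrite the exponents, this limit equals $\psi(z';\t')-ie^{\frac{\pi i}{c(c\t+d)}\left(\im z'/\im\t'\right)^2}\calE_{\frac ac}(z';\t')$. Substituting into the displayed identity yields precisely the claimed formula. The case $z=0$ is Lemma~3.1 of \cite{CN}, so the new input is only the careful handling of the shifts $m\mapsto m+\frac{\im z'}{\im\t'}$ in the sign and error functions and of the $z$-dependent Gaussian factors.

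The main obstacle is thus bookkeeping rather than a single deep step, and it has three parts. First, one must justify interchanging $\lim_{w\to a/c}$ with the defining sum of $\wh\psi$ and with the contour integral; this is exactly where the $+\e$ shift in the upper endpoint $\t'+i\infty+\e$ of $\calE_{a/c}$ is needed, to control the convergence of the integrand as one approaches $i\infty$ along the real direction. Second, one must track the multiplier and branch-cut factors: verifying that $\e_{\t,w}\pabcd^{-1}\to1$ and that the $\chi\pabcd^{-3}$ produced by Theorem~\ref{T:CNtrans} is literally the one in the statement, which rests on the compatibility of the principal square root under $\t\mapsto\frac{a\t+b}{c\t+d}$. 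Third, one must check that the $z$-dependent exponentials match, i.e.\ that $e^{-\pi icz^2/(c\t+d)}$ combined with the exponential emitted when the Eichler integral is normalized into $\calE_{a/c}$ reproduces the prefactor $e^{\frac{\pi i}{c(c\t+d)}(\im z'/\im\t')^2}$. Each of these is a routine but delicate computation; granting them, the theorem is immediate from Theorem~\ref{T:CNtrans}.
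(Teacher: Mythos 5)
Your proposal is correct and follows essentially the same route as the paper's proof: both write $\wh\psi=\psi+\psi^*$ with $\psi^*$ the integral from $w$ to $\t+i\infty+\e$ (obtained by integrating $\frac{\del}{\del w}\wh\psi$), apply Theorem~\ref{T:CNtrans}, and send $w\to\t+i\infty+\e$ so that $\e_{\t,w}\pabcd\to1$, the transformed argument $\frac{aw+b}{cw+d}$ tends to $\frac ac$, and the surviving integral becomes $\calE_{a/c}$, with the exponential prefactor coming from $\frac{a\t+b}{c\t+d}=\frac ac-\frac{1}{c(c\t+d)}$. The three ``bookkeeping'' checks you defer are exactly the computations the paper carries out at the end of its proof.
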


Theorem \ref{T:psitrans} allows to estimate the Taylor coefficients in $z$ of $\psi$ if $\t$ tends to a rational number.

\subsection{Partition statistics: ranks and cranks}  

A {\it partition} $\l$ of a positive integer $n$ is a sequence of integers, $\l_1\ge\ldots\ge\l_\ell\ge1$, such that $\l_1+\ldots+\l_\ell=n$. The generating function for $p(n)$, the number of partitions of $n$, is the well-known infinite product (setting $p(0):=1$)
$$
	\sum_{n \geq 0} p(n)q^n = \frac{1}{(q;q)_{\infty}},
$$
where $(a;q)_n:=\prod_{j=0}^{n-1}(1-aq^j)$ for $n\in\N_0\cup\{\infty\}$. Note that $\eta(\t):=q^\frac{1}{24}\prod_{n=1}^\infty(1-q^n)$ is {\it Dedekind's eta function}, a modular form. Groundbreaking work of Hardy and Ramanujan developed the Circle Method to exploit the modularity to obtain an asymptotic series for $p(n)$ \cite{HR}. Rademacher later improved this to the following exact formula \cite{R}. For $h,k\in\Z$ with $\gcd(h,k)=1$, we define $0\le[-h]_k<k$ by $-h[-h]_k\equiv1\Pmod{k}$ and let $\chi$ be the multiplier for the Dedekind eta function as in \eqref{E:etamultiplier}. Define the {\it Kloostermann sums}
$$
	A_k(n):=i^{\frac{1}{2}}\sum_{\substack{0 \leq h < k \\ \gcd(h,k)=1}} \chi\mat{[-h]_k&-\frac{h[-h]_k+1}{k}\\k&-h}e^{-\frac{2\pi i}{24k}\left((24n-1)h+[-h]_k\right)}.
$$
Then
$$
	p(n)=\frac{2\pi}{(24n-1)^{\frac{3}{4}}}\sum_{k \geq 1}\frac{A_k(n)}{k}I_{\frac{3}{2}}\left(\frac{\pi\sqrt{24n-1}}{6k}\right).
$$

On the other hand, Ramanujan's famous congruences show surprising arithmetic regularity in $p(n)$, namely \cite{Ramanujan}
$$
	p(5n+4) \equiv 0 \pmod{5}, \qquad p(7n+5) \equiv 0 \pmod{7}, \qquad p(11n+6) \equiv 0 \pmod{11}.
$$
In attempt to explain these, Dyson defined the {\it rank} of a partition as $\rank(\l):=\l_1-\ell$, and he conjectured that this simple statistic can be used to decompose the set of partitions of $5n+4$ into 5 equal classes \cite{Dyson} and that the same should hold for the congruences modulo 7. These conjectures were settled by Atkin and Swinnerton-Dyer in \cite{ASD}. As the congruence modulo $11$ was yet to be explained, Dyson famously called for a ``crank'' statistic that would dissect all three congruences. Such a statistic was found by Andrews and Garvan \cite{AndrewsGarvan,Garvan}. Define $o(\l)$ as the number of ones in $\l$ and let $\mu(\l)$ be the number of parts larger than $o(\l)$. Then the {\it crank} is defined as
$$
	\mathrm{crank}(\lambda):=\begin{cases} \lambda_1 & \text{if $o(\lambda)=0$,} \\ \mu(\lambda)-o(\lambda) & \text{if $o(\lambda) \geq 1$.} \end{cases}
$$
Let $N(m,n)$ (resp. $M(m,n)$) denote the number of partitions of $n$ with rank (resp. crank) equal to $m$.  Then one has
\begin{align}\label{E:rankgenfn}
	R(\z;q) &:= \sum_{\substack{n\ge0\\m\in\Z}} N(m,n)q^n\z^m = \sum_{n\ge0} \frac{q^{n^2}}{(\z q;q)_n\left(\z^{-1}q;q\right)_n},\\
	\label{E:crankgenfn}
	C(\z;q) &:= \sum_{\substack{n\ge0\\m\in\Z}} M(m,n)\z^mq^n = \frac{(q;q)_\infty}{(\z q;q)_\infty\left(\z^{-1}q;q\right)_\infty}.
\end{align}
Like the partition generating function, these functions also enjoy certain modularity properties. Namely $C(\z;q)$ is essentially a Jacobi form, while $R(\z;q)$ is known to be a mock Jacobi form.

A study of moments of the rank and crank statistics was undertaken by Atkin and Garvan \cite{AG},
\begin{align*}
	\sum_{n\ge0} N_\ell(n)q^n &:= \sum_{n\ge0} \left(\sum_{m\in\Z} m^\ell N(m,n)\right) q^n = \frac{1}{(2\pi i)^\ell}\left[\frac{\del^\ell}{\del z^\ell} R(\z;q)\right]_{z=0},\\
	\sum_{n\ge0} M_\ell(n)q^n &:= \sum_{n\ge0} \left(\sum_{m\in\Z} m^\ell M(m,n)\right) q^n = \frac{1}{(2\pi i)^\ell}\left[\frac{\del^\ell}{\del z^\ell} C(\z;q)\right]_{z=0}.
\end{align*}
The symmetry $\z\mapsto\z^{-1}$ in \eqref{E:rankgenfn} and \eqref{E:crankgenfn} implies that $N_\ell(n)=M_\ell(n)=0$ for $\ell$ odd. Atkin and Garvan's ``rank-crank PDE'' connects $R(\z;q)$ and $C(\z;q)$ and allowed them to prove many identities for these moments. The rank-crank PDE was also exploited by the second author, Mahlburg, and Rhoades \cite{BMR} in their proof of the asymptotic formulas,
\begin{equation}\label{E:rankcrankmomentsasymp}
	N_{2\ell}(n) \sim M_{2\ell}(n) \sim \left(2^{2\ell}-2 \right)|B_{2\ell}| (6n)^{\ell}p(n),
\end{equation}
where $B_k$ is the $k$-th Bernoulli number. Note that for the crank moments, it is straightforward from \eqref{E:crankgenfn} to compute the derivatives of $C(\z;q)$ and write them recursively in terms of Eisenstein series \cite[equation (4.6)]{AG}, whereas derivatives of $R(\z;q)$ are less tractable. In follow-up work, the second author, Mahlburg, and Rhoades significantly improved on \eqref{E:rankcrankmomentsasymp} by exploiting the Jacobi and mock Jacobi transformations of $C(\z;q)$ and $R(\z;q)$ \cite{BMR2}. In particular, they avoided the rank-crank PDE and proved asymptotic series for the moments with polynomial error.

\subsection{Ranks for unimodal sequences}

Let $u(m,n)$ denote the number of unimodal sequences
\begin{equation}\label{E:unimodaldef}
	a_1 \le \ldots \le a_r \le c \ge b_1 \ge \ldots \ge b_s, \qquad\text{where}\qquad s - r = m \quad \text{and} \quad \ a_1+a_2+ \dots + b_s=n.
\end{equation}
The {\it size} of a unimodal sequence is the sum of the parts and the rank is $m$. Early work on enumeration and asymptotics for unimodal sequences is due to Auluck \cite{Auluck} and Wright \cite{Wright2}. Standard techniques (see \cite[\S 2.5]{Stanley}) yield that the generating function for $u(m,n)$ equals
$$
	U(\zeta;q)=\sum_{\substack{n \geq 0 \\ m \in \mathbb{Z}}}u(m,n)q^n\zeta^m=\sum_{n \geq 0} \frac{q^n}{(\zeta q,\zeta^{-1}q;q)_{n}}.
$$
In particular, the enumeration function $u(n)$ for all unimodal sequences can be factored as the product (essentially) of a weakly holomorphic modular form and a false theta function (see \cite{Wright2}),
$$
	U(1;q)=\sum_{n \geq 0} u(n)q^n=\frac{1}{(q;q)_{\infty}^2}\sum_{n \geq 0} (-1)^{n}q^{\frac{n(n+1)}{2}}.
$$
In Theorem \ref{thm:u1}, we show that, after subtracting a sparse series, $U(\zeta;q)$ transforms essentially as a ``false Jacobi form''.

In general, there are barriers preventing natural extensions of partition results to unimodal sequences. A recent achievement was an exact formula for $u(n)$ proved by the second author and Nazaroglu using their framework for modularity of false theta functions. Prior to \cite{CN}, essentially only the main exponential term was known (see \cite{Auluck, Wright2}). The asymptotic formula below corresponds to the second term in \cite[Theorem 1.3]{CN}.\footnote{Note that our definition here differs from the one in \cite{CN}. If one denotes those unimodal sequences by $u^*(n)$, then
\[
	u(n) = \coeff_{\left[q^n\right]} \frac{1}{(q;q)_\infty^2} - u^*(n).
\]} Define the Kloostermann sums $K_k(n,\nu)$ as in \eqref{E:KloostermannUnrestricted}.

\begin{theorem}[\cite{CN}, Theorem 1.3]\label{thm:cn1.3}
	We have
	\begin{multline*}
		u(n) = \frac{\pi}{2^\frac34\sqrt{3}(24n+1)^\frac34}\sum_{k\ge1} \sum_{0\le\nu\le2k-1} \frac{K_k(n,\nu)}{k^2}\\
		\times \int_{-1}^1 \cot\left(\frac{\pi}{2k}\left(\frac{x}{\sqrt{6}}-\nu-\frac12\right)\right) I_\frac32\left(\frac{\pi}{3\sqrt{2}k}\sqrt{\left(1-x^2\right)(24n+1)}\right)\left(1-x^2\right)^\frac34 dx.
	\end{multline*}
\end{theorem}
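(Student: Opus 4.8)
The plan is to run a Rademacher-type circle method on $U(1;q)$, with Theorem~\ref{T:psitrans} at $z=0$ playing the role that modularity of $\eta$ plays in the Hardy--Ramanujan--Rademacher formula for $p(n)$. The first step is bookkeeping. Splitting $m\in\Z+\frac12$ into $m>0$ and $m<0$ gives $\psi(0;\t)=2iq^{\frac18}\sum_{n\ge0}(-1)^nq^{\frac{n(n+1)}2}$, while Jacobi's identity gives $\sum_{n\ge0}(-1)^n(2n+1)q^{\frac{(2n+1)^2}8}=\eta(\t)^3$; together with $(q;q)_\infty^{-2}=q^{\frac1{12}}\eta(\t)^{-2}$ these yield
\[
	U(1;q)=\frac{1}{(q;q)_\infty^2}\sum_{n\ge0}(-1)^nq^{\frac{n(n+1)}2}=\frac{1}{2i}\,q^{-\frac1{24}}\,\frac{\psi(0;\t)}{\eta(\t)^2},
\]
so that $u(n)=\frac1{2i}\,\coeff_{[q^{n+1/24}]}\frac{\psi(0;\t)}{\eta(\t)^2}$. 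Writing this coefficient as a Cauchy integral $\frac1{2i}\int\frac{\psi(0;\t)}{\eta(\t)^2}e^{-2\pi i(n+\frac1{24})\t}\,d\t$ over a horocyclic contour and Farey-dissecting the contour into arcs around the rationals $\frac hk$ with $1\le k\le N$, $\gcd(h,k)=1$, puts us in position to apply the transformation laws, the final answer being obtained in the limit $N\to\infty$.

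On the arc at $\frac hk$, choose $\pabcd\in\SL_2(\Z)$ with $a=h$, $c=k$ and set $\t'=\frac{a\t+b}{c\t+d}$, so that $\t'\to i\infty$ as $\t\to\frac hk$. Combining the classical weight $-1$ transformation $\eta(\t)^{-2}=\chi\pabcd^{2}(c\t+d)\,\eta(\t')^{-2}$ with Theorem~\ref{T:psitrans} (noting that the exponential prefactors are trivial at $z=0$) gives
\[
	\frac{\psi(0;\t)}{\eta(\t)^2}=\chi\pabcd^{-1}(c\t+d)^{\frac12}\;\frac{\psi(0;\t')-i\,\calE_{\frac hk}(0;\t')}{\eta(\t')^2}.
\]
Here $\eta(\t')^{-2}=(q')^{-\frac1{12}}(1+O(q'))$ blows up exponentially, $\psi(0;\t')=O((q')^{1/8})$ is exponentially small (and will only feed the error term), while $\calE_{\frac hk}(0;\t')$ --- which by the Jacobi identity above is the Eichler integral $\int_{\frac hk}^{\t'+i\infty}\eta(\zz)^3\,(i(\zz-\t'))^{-1/2}\,d\zz$ of the weight $\frac32$ cusp form $\eta^3$ --- decays only polynomially in $\im(\t')$. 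Thus the dominant contribution near each cusp is $-i\,(q')^{-1/12}\,\calE_{\frac hk}(0;\t')$, and since the combined transformation factor is $(c\t+d)^{1/2}$ (effective weight $-\frac12$), the usual Rademacher step --- closing the Farey arcs into Ford circles and recognizing the integral representation of the $I$-Bessel function --- will produce a Bessel function of index $1-(-\frac12)=\frac32$; tracking the $q^{-1/24}$ shift, via $24n+1=24(n+\frac1{24})$, accounts for the prefactor $(24n+1)^{-3/4}$ and the Bessel argument $\frac{\pi}{3\sqrt2 k}\sqrt{(1-x^2)(24n+1)}$.

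The heart of the matter --- and the step I expect to be the main obstacle, precisely because the theorem is an exact identity rather than an asymptotic estimate --- is to evaluate the contribution of $\calE_{\frac hk}(0;\t')$ in closed form. The plan is to deform the contour defining $\calE_{\frac hk}(0;\t')$ to a standard path running transversally between the branch point and the cusp $\frac hk$, to parametrize it by $x\in[-1,1]$, and to unfold the theta expansion $\eta^3=\sum_{n\ge0}(-1)^n(2n+1)q^{(2n+1)^2/8}$ along it. Splitting $2n+1$ into residue classes modulo $2k$ introduces the auxiliary index $0\le\nu\le2k-1$, and resumming each class as a geometric-type series produces, via $\sum_{\ell\in\Z}(\ell+\alpha)^{-1}=\pi\cot(\pi\alpha)$, exactly the weight $\cot\bigl(\frac{\pi}{2k}(\frac x{\sqrt6}-\nu-\frac12)\bigr)$, while the $(1-x^2)^{3/4}$ and the $\sqrt{1-x^2}$ inside the Bessel arise from the saddle of the remaining radial integral. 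Summing the multipliers $\chi\pabcd$ against the exponential phases $e^{-2\pi ih(n+1/24)/k}$ and the residue phases over all admissible $h$ then assembles the Kloostermann sums $K_k(n,\nu)$ of \eqref{E:KloostermannUnrestricted}.

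It remains to control the error. One must show that the pieces discarded above --- the $\psi(0;\t')$ term, the $O(q')$ corrections to $\eta(\t')^{-2}$ and to the Eichler integrand, and the mismatch between the Farey arcs and the completed Ford circles --- all tend to $0$ as $N\to\infty$, and that the resulting series over $k$ converges; as always in Rademacher-type arguments, this convergence is the delicate point, and here it is secured by the $k^{-2}$, the decay of $I_{3/2}$, and Weil-type bounds on $K_k(n,\nu)$. Combining all of this and passing to the limit $N\to\infty$ yields the claimed exact formula for $u(n)$.
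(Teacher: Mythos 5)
This theorem is quoted from \cite{CN}; the present paper gives no proof of it, so the fair comparison is with the paper's own implementation of the same machinery in the proof of Theorem \ref{thm:ul} (whose $\ell=0$ case is the asymptotic version of this statement). Your overall plan is the right one and matches that route: the identity $U(1;q)=\tfrac{1}{2i}q^{-1/24}\psi(0;\t)/\eta(\t)^2$ is correct, the identification of $\calE_{\frac ac}(0;\t)$ with the Eichler integral of $\eta^3$ is correct and is exactly the mechanism in \cite{CN}, the effective weight $-\tfrac12$ and Bessel index $\tfrac32$ are right, and the assembly of the Kloosterman sums \eqref{E:KloostermannUnrestricted} from the multipliers and phases is as in the paper. (One bookkeeping slip: for the arc at $\tfrac hk$ you need $c=k$, $d=-h$, which forces $a=[-h]_k$, not $a=h$; accordingly the Eichler integral that appears is $\calE_{[-h]_k/k}$, as in Corollary \ref{C:generalpsitranshknotation}.)

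The genuine gap is at the step you yourself flag as the heart of the matter: the closed-form evaluation of the Eichler-integral contribution. As literally described --- deform the contour, ``unfold the theta expansion $\eta^3=\sum_n(-1)^n(2n+1)q^{(2n+1)^2/8}$ along it,'' then resum residue classes into a cotangent --- this fails, because the lower endpoint of the Eichler integral is the rational point $\tfrac{[-h]_k}{k}$, where the theta series has no decay and the termwise interchange of sum and integral is not justified (the resulting $m$-sum of endpoint contributions diverges). The device that makes the exact formula work is different: one first integrates \emph{each} theta term exactly using Lemma \ref{lem:lem24}, converting $\calE$ into a sum over $m\in\Z+\tfrac12$ of Gaussian (Mordell-type) integrals $\int_\R e^{-\pi x^2/(kZ)}(x-m)^{-1}\,dx$ (Lemma \ref{L:EichlerRewrite}), whose Gaussian factor restores convergence in $m$; one then splits each integral at $|x|=\sqrt{2D}$ with $D=\tfrac1{12}$ into $\calE^*$ and $\calE^e$, proves that $\calE^e$ is negligible (Lemma \ref{lem:calEbound}), and only on the compact piece performs the residue-class resummation $\pi\cot(\pi\alpha)=\lim_N\sum_{|r|\le N}(\alpha+r)^{-1}$ to produce the cotangent weight and, after rescaling, the integral over $x\in[-1,1]$ with the factor $e^{\frac{\pi}{6kZ}(1-x^2)}$ (Lemma \ref{lem:calE*}). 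This truncation is precisely what localizes the formula to $[-1,1]$ and separates main term from error; without it your resummation step has no meaning. Finally, the $(1-x^2)^{3/4}$ and the $\sqrt{1-x^2}$ in the Bessel argument do not come from a saddle point but from interchanging the $x$- and $Z$-integrals and applying the loop-integral representation of $I_\nu$ (Lemma \ref{L:BesselBound}) with $B=\tfrac{\pi(1-x^2)}{6k}$; that is a cosmetic point, but it matters if you intend to control errors uniformly in $x$ near $\pm1$.
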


In the present paper, we study the $\ell$-th unimodal rank moments, defined by
\[
	u_\ell(n) := \sum_{m\in\Z} m^\ell u(m,n) = \coeff_{\left[q^n\right]}\left[\left(\z\frac{\del}{\del\z}\right)^\ell U(\z;q)\right]_{\z=1}.
\]
For $\ell$ odd we have $u_\ell(n)=0$ by symmetry. For $\ell$ even the main asymptotic term and resulting logistic distribution of the rank was proved by the second author, Jennings-Shaffer, and Mahlburg \cite{BJSM}. We greatly improve their results to an asymptotic series, given in terms of integrals involving
$$
	C_j(w):=\left(\frac{1}{2\pi i}\frac{\del}{\del w}\right)^j\cot(\pi w)
$$
and the $I$-Bessel function $I_\nu$, certain numbers $\k(a,b,c)$ defined in equation \eqref{E:kappadef} and the Kloostermann sums $K_k(n,\nu)$ given in \eqref{E:KloostermannUnrestricted}.

\begin{theorem}\label{thm:ul}
	For $\ell\in2\N_0$ and $n\to\infty$, we have 
	\begin{multline*}
		u_{\ell}(n)=\frac{\pi}{2^\frac34\sqrt{3}(24n+1)^\frac34} \sum_{\substack{0 \leq j \leq \frac{\ell}{2}\\ a,b,c \geq 0\\a+b+c= j}} \binom{\ell}{2j}  \frac{\k(a,b,c)}{2^{\frac{a}{2}+c}}(24n+1)^{\frac{a}{2}+c} \sum_{\substack{1\le k\le  \sqrt{n}\\0\leq \nu\leq 2k-1}} k^{a-2}K_k(n,\nu) \\
		\times \int_{-1}^1 C_{\ell-2j}\left(\frac{1}{2k}\left(\frac{x}{\sqrt{6}}-\nu-\frac12\right)\right) \frac{I_{a+2c-\frac{3}{2}}\left(\frac{\pi}{3\sqrt{2}k}\sqrt{\left(24n+1\right)(1-x^2)}\right)}{(1-x^2)^{\frac{a}{2}+c-\frac{3}{4}}}dx \\+O\left(n^{\frac{5\ell}{4}-\frac{1}{2}}+n^{-\frac{1}{4}}\log(n)\right).
	\end{multline*}
\end{theorem}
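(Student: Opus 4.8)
The proof combines three ingredients: a ``false Jacobi form'' decomposition of $U(\z;q)$, the transformation law of Theorem~\ref{T:psitrans} for the false theta function $\psi$, and the Hardy--Ramanujan Circle Method. I would begin by establishing (this is the content of Theorem~\ref{thm:u1}), via a bilateral $q$-series manipulation of $U(\z;q)=\sum_{n\ge0}q^n/(\z q,\z^{-1}q;q)_n$, a decomposition of the shape \eqref{E:genfnfactor},
\[
	U(\z;q)\;=\;H(\z;q)\,\psi(z;\t)\;+\;(\text{sparse series}),
\]
where $H(\z;q)$ is, up to a power of $q$, a weakly holomorphic meromorphic Jacobi form built from $\eta$ and the Pochhammer symbols $(\z q,\z^{-1}q;q)_\infty$, holomorphic at $\z=1$ with $H(1;q)=q^{-1/8}/(2i\,(q;q)_\infty^2)$, and where the sparse series has coefficients of sub-exponential growth. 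At $\z=1$ this recovers Wright's factorisation $U(1;q)=(q;q)_\infty^{-2}\sum_{n\ge0}(-1)^nq^{n(n+1)/2}$, since $\psi(0;\t)=2iq^{1/8}\sum_{n\ge0}(-1)^nq^{n(n+1)/2}$.

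Next I would apply $(\z\partial_\z)^\ell=(2\pi i)^{-\ell}\partial_z^\ell$ to this identity and set $z=0$. The Leibniz rule, together with the symmetry $\z\mapsto\z^{-1}$ of both $U$ and $H$ (which kills every odd-order Taylor coefficient in $z$), produces a finite sum
\[
	u_\ell(n)=\sum_{0\le j\le\ell/2}\binom{\ell}{2j}\,\coeff_{[q^n]}\!\big(H_{2j}(q)\,\psi_{\ell-2j}(q)\big)+(\text{sparse}),
\]
where $H_{2j}$ is the $2j$-th Taylor coefficient of $H(\z;q)$ in $z$, a weakly holomorphic quasimodular form expressible through $E_2,E_4,E_6$ as in \cite[(4.6)]{AG}, and $\psi_{\ell-2j}$ is the $(\ell-2j)$-th Taylor coefficient of $\psi(z;\t)$, a false theta function of weight $\tfrac12+\ell-2j$ (up to lower-order corrections). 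The indices $a,b,c$ with $a+b+c=j$ and the constants $\k(a,b,c)$ of the statement will record the monomial decomposition of $H_{2j}$ and of the $z$-derivatives of the Eichler integral attached to $\psi$; this split is what ultimately produces $(24n+1)^{a/2+c}$, the power $k^{a-2}$, and the Bessel index $a+2c-\tfrac32$, which reduces to $\tfrac32$ --- as in Theorem~\ref{thm:cn1.3} --- when $\ell=0$.

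I would then run the Circle Method: write $u_\ell(n)$ as a contour integral around $q=0$, dissect into Farey arcs at rationals $h/k$, put $q=e^{2\pi i(h/k+iz'/k)}$ with $\re z'$ small on the arc at $h/k$, and insert for $H_{2j}$ the modular transformation of its quasimodular piece (standard, giving the expected exponential growth and multiplier data) and for $\psi_{\ell-2j}$ the transformation of Theorem~\ref{T:psitrans}. The Eichler integral $\calE_{a/c}(z/(c\t+d);\cdots)$ there, after the change of variables straightening its defining contour and after differentiating in $z$, is precisely what yields the integral $\int_{-1}^1(\cdots)\,dx$ over the rescaled integration variable; resumming the half-integer lattice sum inside $\calE_{a/c}$ against the Gaussian produces a $\cot$, which the remaining $\ell-2j$ derivatives in $z$ turn into $C_{\ell-2j}(\tfrac{1}{2k}(\tfrac{x}{\sqrt6}-\nu-\tfrac12))$. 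Evaluating the resulting $z'$-integral by the classical reduction to Hankel integrals $\tfrac{1}{2\pi i}\int e^{\alpha/t+nt}t^{-s}\,dt$ gives the $I$-Bessel functions, and the sums over $h$ and over the residue $\nu$ mod $2k$ (forced by the period of the theta-type factor, exactly as in Theorem~\ref{thm:cn1.3}) assemble into the Kloostermann sums $K_k(n,\nu)$ of \eqref{E:KloostermannUnrestricted}.

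What remains is estimation: the sparse series; the companion term $\psi(z/(c\t+d);\cdots)$ in Theorem~\ref{T:psitrans} together with all non-principal parts of the modular factors (all exponentially smaller than the main term); the Farey-arc tails; and the truncation of the $k$-sum at $k\le\sqrt n$. Each $z$-derivative acting near a cusp costs a factor $O(\sqrt n)$ --- from $e^{-\pi icz^2/(c\t+d)}$ in Theorems~\ref{T:CNtrans} and \ref{T:psitrans} and from the $\z$-dependence of $H$ --- so $\ell$ of them, combined with the standard Circle-Method error for $u(n)$, produce the stated $O(n^{5\ell/4-1/2}+n^{-1/4}\log n)$. The main obstacle, I expect, is exactly the interplay described in the previous paragraph: one must Taylor-expand the \emph{false} theta transformation (equivalently Theorem~\ref{T:CNtrans} for $\wh\psi$) in $z$, control the $z$-derivatives of the Mordell-type integrals $\calE_{a/c}$, and do so with error bounds uniform in $k$ so that the Farey dissection converges, and then match the rescaled, differentiated Eichler integral to the explicit integrand $C_{\ell-2j}(\cdots)\,I_{a+2c-3/2}(\cdots)/(1-x^2)^{a/2+c-3/4}$, thereby pinning down $\k(a,b,c)$. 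The rest is bookkeeping along the lines of \cite{CN} and \cite{BJSM}.
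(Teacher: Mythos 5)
Your proposal follows essentially the same route as the paper's proof: the Ramanujan/Kim--Lovejoy decomposition (Lemma \ref{L:genfnsrewrite}, where the Jacobi factor is $-i\calC^*/(2\eta)$ and the theta piece is $\vth(2z;\t)+\psi(2z;\t)$, so the false theta enters at argument $2z$), the transformation of Theorem \ref{T:psitrans} applied \emph{before} Taylor expanding in $z$, the splitting of the Eichler integral into a Mordell-type main part (whose half-integer lattice sum resums to a cotangent, yielding $C_{\ell-2j}$) plus a tail bounded by $\log(k)+k^\ell$, and the Farey/Bessel machinery of Lemma \ref{L:BesselBound} with the Kloostermann sums \eqref{E:KloostermannUnrestricted}. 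The only cosmetic deviation is your suggestion to express the Taylor coefficients of the Jacobi factor through Eisenstein series à la Atkin--Garvan; the paper instead keeps $\calC^*/\eta$ intact, transforms it as a Jacobi form, and obtains the constants $\k(a,b,c)$ from the expansion \eqref{E:f_kdef} of the elliptic prefactor $f_\nu$ --- both work, but the latter is what makes the bookkeeping uniform in $k$.
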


We consider also \textit{Durfee unimodal sequences} introduced by Kim and Lovejoy \cite{KL2}, which are those sequences \eqref{E:unimodaldef} where $k$ is the size of the Durfee square (see \cite{Andrews}) of the partition $\sum_{j=1}^ra_j$ and $b_s\le c-k$. Let $v(m,n)$ be the number of Durfee unimodal sequences of size $n$ and rank $m$.  Standard techniques (see \cite[proof of Proposition 3.1]{KL2}) show that the generating function is 
\[
	V(\z;q) = \sum_{\substack{n\ge0\\m\in\Z}} v(m,n)q^n\z^m = \sum_{0\le\ell\le m} \frac{ (q;q)_mq^{\ell^2+m}\z^\ell}{(\z q,q;q)_\ell(\z^{-1}q,q;q)_{m-\ell}}.
\]
Define the moments $v_\ell(n)$ as above. Although not obvious from the definition or the above generating function, $V(\z;q)$ is symmetric under $\z\mapsto\z^{-1}$.\footnote{This follows from \cite[Proposition 3.1]{KL2},
\[
	V(\z;q) = \sum_{m\ge0} \frac{\left(q^{m+1};q\right)_mq^m}{\left(\z q,\z^{-1}q;q\right)_m}.
\]} Again note that $v_\ell(n)=0$ for $\ell$ odd, the main asymptotic term and resulting logistic distribution may be found in \cite{BJSM}. Asymptotics for $v_\ell(n)$ are significantly more challenging than for $u_\ell(n)$ because $V(\z;q)$ depends on a shifted version of $\psi$ (see Lemma \ref{L:genfnsrewrite}).  Unlike for unrestricted unimodal sequences, the shifted $\psi$ has terms that contribute to asymptotic growth (compare Corollaries \ref{C:generalpsitranshknotation} and \ref{C:psitransdurfeecase}).  This is reflected by the terms below in the second sum on $k$.

To state the asymptotic series for $\ell$ even, we require some notation. Let $K:=\frac{k}{\gcd(k,6)}$ and define $\g_{k,\vp,\nu}$, $\d_{k,\vp,\nu}$, and $\e_{k,\vp}^\pm$ as in \eqref{E:gammadef}, \eqref{E:deltadef}, and \eqref{E:epsilonpmdef}, respectively. Let $\calK_{k,\vp,\nu}(n)$, $\calK_{k,\vp,j_2,j_3,a}^{\pm,[1]}(n)$, and $\calK_{k,\vp,j_3,a}^{\pm,[2]}(n)$ be the Kloostermann-type sums defined in \eqref{E:DurfeeKloostermann}, \eqref{E:erfDurfeeKloostermann1}, and \eqref{E:erfDurfeeKloostermann2}, respectively. We use the notation $\sum_\pm$ to denote a sum of two terms depending on the sign.

\begin{theorem}\label{thm:u2}
	For $\ell\in2\N_0$ and $n\to\infty$, we have
	\begin{align*}
		&v_\ell(n) = \frac{\pi}{2^\frac123^\frac94(4n+1)^\frac34}\sum_{\substack{0\le j\le\frac\ell2\\a,b,c\ge0\\a+b+c=j}} \binom{\ell}{2j}\frac{3^{\frac a2+c}\k(a,b,c)}{2^{\ell-2j}}(4n+1)^{\frac a2+c}\hspace{-.7cm}\sum_{\substack{0\le k\le\sqrt{n}\\0\le\nu\le6K-1\\0\le\vp\le\gcd(k,6)-1\\\gcd(\vp,\gcd(k,6))=1}} \hspace{-.5cm} \sqrt{\gcd(k,6)}k^{a-1}\calK_{k,\vp,\nu}(n)\\
		&\times \int_{-1}^1 \left(C_{\ell-2j}\left(\tfrac{1}{6k}\left(x-\gcd(k,6)\left(\nu+\tfrac12\right)+\tfrac k2-\vp\right)\right) - \d_{k,\vp,\nu}(-1)^{\frac\ell2+j}2(\ell-2j)! \left(\tfrac{3k}{\pi(x+\g_{k,\vp,\nu})}\right)^{\ell-2j+1}\right)\\
		&\hspace{9cm}\times  \frac{I_{a+2c-\frac32}\left(\frac{\pi}{\sqrt{3}k}\sqrt{(4n+1)\left(1-x^2\right)}\right)} {\left(1-x^2\right)^{\frac a2+c-\frac34}} dx\\
		&+\sum_{\substack{1\le k\le\sqrt{n}\\0\le\vp\le\gcd(k,6)-1\\\gcd(\vp,\gcd(k,6))=1}} \sum_\pm \e_{k,\vp}^\pm\frac{\sqrt{\gcd(k,6)}}{4n+1} \left(\sum_{\substack{j_1,j_3\ge0,j_2\ge1\\2j_1+j_2+j_3=\ell}} \sum_{\substack{a,b,c\ge0\\a+b+c=j_1}} \sum_{0\le j_4\le\frac{j_2-1}{2}}\right.\\
		&\hspace{2cm}\frac{\ell!3^{\frac a2+c+\frac{j_2}{2}+\frac{j_3}{2}+\frac{j_4}{2}-1}k^{a-\frac12+j_4}}{(2j_1)!j_2\cdot j_3!j_4!(j_2-1-2j_4)!2^{\ell+a-2j_1+j_4-1}\gcd(k,6)^{j_3}\pi^{j_4}}\k(a,b,c) \calK_{k,\vp,j_2,j_3,a}^{\pm,[1]}(n)\\
		&\hspace{5.5cm}\times (4n+1)^{\frac a2+c+\frac{j_2}{2}+\frac{j_3}{2}-\frac{j_4}{2}} I_{a+2c+j_2+j_3-j_4-2}\left(\frac{\pi}{\sqrt{3}k}\sqrt{4n+1}\right)\\
		&\pm \sum_{\substack{j_1,j_3\ge0\\2j_1+j_3=\ell}} \sum_{\substack{a,b,c\ge0\\a+b+c=j_1}} \binom{\ell}{2j_1}\k(a,b,c)\frac{3^{\frac a2+c+\frac{j_3}{2}-\frac{3}{2}}k^{a-\frac32}\pi}{2^{a+j_3-1}\gcd(k,6)^{j_3}} \calK_{k,\vp,j_3,a}^{\pm,[2]}(n)(4n+1)^{\frac a2+c+\frac{j_3}{2}-\frac{1}{2}}\\
		&\hspace{3.5cm}\left.{\vphantom{\sum_{\substack{j_1,j_3\ge0,j_2\ge1\\2j_1+j_2+j_3=\ell}}}}\times \int_0^1 t^{-a-2c-j_3+1}I_{a+2c+j_3-1}\left(\frac{\pi t}{\sqrt{3}k}\sqrt{4n+1}\right) dt\right) + O\left(n^{\frac{5\ell}{4}-\frac12}+\frac{\log(n)}{n^\frac14}\right).
	\end{align*}
\end{theorem}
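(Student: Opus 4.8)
The plan is to follow the three-stage template behind Theorems~\ref{thm:cn1.3} and~\ref{thm:ul} --- rewrite $V(\z;q)$ in the shape of~\eqref{E:genfnfactor}, extract Taylor coefficients in $z$, and run the Circle Method --- with the complication, flagged above, that the false-theta building block is a \emph{shifted} copy of $\psi$. First I would use Lemma~\ref{L:genfnsrewrite} to write $V(\z;q)$, up to a sparse series whose coefficients are negligible in the end, as the product of a weakly holomorphic modular/Jacobi factor --- built from $\frac{1}{(q;q)_\infty^2}$ and Jacobi theta quotients --- and a shifted specialization of $\psi$; the shift is precisely what separates this case from the unrestricted unimodal sequences of Theorem~\ref{thm:ul}. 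Applying $\left(\z\frac{\del}{\del\z}\right)^\ell = \left(\frac{1}{2\pi i}\frac{\del}{\del z}\right)^\ell$ and setting $\z=1$, the Leibniz rule distributes the derivative between the modular factor --- whose logarithmic derivatives at $\z=1$ produce quasimodular Eisenstein-type series and the combinatorial weights $\k(a,b,c)$ of~\eqref{E:kappadef} --- and the shifted $\psi$, whose $z$-derivatives at $z=0$ are then controlled through the transformation law.

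The key analytic input is the behaviour of the shifted $\psi$ near a rational point. I would specialize Theorem~\ref{T:psitrans} to the Durfee shift, recording the outcome as Corollary~\ref{C:psitransdurfeecase} (the analogue of Corollary~\ref{C:generalpsitranshknotation}): this expresses $\psi$ near $\frac hk$ in terms of (i) the transformed $\psi$ and (ii) the Eichler integral $\calE_{a/c}$. The crucial new phenomenon --- encoded by $\g_{k,\vp,\nu}$, $\d_{k,\vp,\nu}$, and $\e_{k,\vp}^\pm$ of~\eqref{E:gammadef}, \eqref{E:deltadef}, \eqref{E:epsilonpmdef} --- is that for the shifted $\psi$ the limiting value of term~(i) does \emph{not} vanish; it contributes a convergent geometric-type series which, together with the $\cot$ produced by $\psi$ as $\t\to0$, yields both the functions $C_{\ell-2j}$ and the rational correction term involving $\d_{k,\vp,\nu}$ and $(x+\g_{k,\vp,\nu})^{-(\ell-2j+1)}$ in the first block of the statement, while term~(ii) is responsible for the entire second block.

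Then I would set up the Circle Method: express $v_\ell(n)$ as a contour integral of the transformed $q$-series and apply a Farey/Ford-circle dissection indexed by the fractions $\frac hk$ with $1\le k\le\sqrt n$, substituting on each arc the transformation laws --- the standard $\eta$-transformation for the modular factor, which supplies the main exponential growth (of order $\exp(\tfrac{\pi}{\sqrt{3} k}\sqrt{4n+1})$), and Corollary~\ref{C:psitransdurfeecase} for the shifted $\psi$. On each major arc the remaining $\t_1$-integral, after extension to a Hankel-type contour, evaluates to $I$-Bessel functions via $\int z^{-s}\exp\!\left(\tfrac{A}{kz}+Bz\right) dz = (\text{const})\,I_{s-1}(\cdots)$; the non-vanishing limiting-$\psi$ piece brings rational functions of the local variable (hence the $C_j$), and the Eichler integral brings an additional parameter, producing the $\int_0^1 t^{\,\cdots}I_{\cdots}\!\left(\tfrac{\pi t}{\sqrt{3} k}\sqrt{4n+1}\right) dt$ of the last block. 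Collecting the exponential sums over $h$ and over the residues $\nu,\vp$ gives the Kloostermann-type sums $\calK_{k,\vp,\nu}(n)$, $\calK^{\pm,[1]}_{k,\vp,j_2,j_3,a}(n)$, $\calK^{\pm,[2]}_{k,\vp,j_3,a}(n)$ of~\eqref{E:DurfeeKloostermann}--\eqref{E:erfDurfeeKloostermann2}, and trivial bounds on the tail $k>\sqrt n$ (where the Bessel arguments are bounded) and on the unused portions of the Eichler integrals yield the error $O\!\left(n^{5\ell/4-1/2}+n^{-1/4}\log n\right)$.

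I expect the main obstacle to be exactly this treatment of the shifted $\psi$: proving Corollary~\ref{C:psitransdurfeecase} with the correct branch and multiplier bookkeeping, and then, inside the Circle Method, isolating precisely which parts of the non-vanishing limiting term and of the Eichler integral contribute to genuine asymptotic growth as opposed to the error term --- this is what forces the elaborate second block of the statement with its nested sums. A secondary but heavy difficulty is the combinatorial bookkeeping: carrying the weights $\k(a,b,c)$, the Leibniz binomials, and the many powers of $3$, $k$, $\gcd(k,6)$, and $4n+1$ consistently through the transformation and the Bessel-integral evaluations so that they assemble into the stated closed form.
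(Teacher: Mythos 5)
Your overall architecture is the paper's: rewrite $V$ via Lemma~\ref{L:genfnsrewrite}~(2), specialize Theorem~\ref{T:psitrans} to the shifted argument to get Corollary~\ref{C:psitransdurfeecase}, convert the Eichler integral to a Mordell-type integral, extract Taylor coefficients, and finish with the Farey dissection and Lemma~\ref{L:BesselBound}. However, you misidentify where the two main blocks of the statement come from, and the misidentification hides the one genuinely new idea of this proof. You claim that the limiting value of the transformed $\psi$ produces the $C_{\ell-2j}$ terms and the rational correction in $\d_{k,\vp,\nu}$, while the Eichler integral produces the entire second block. In fact the restricted theta sum $\Psi$ (the piece of the transformed $\psi$ supported on $|m-\a_{H,K}|>\frac{1}{\gcd(k,6)}$) decays exponentially and is absorbed into the error, exactly as in the unrestricted case; the $\cot$, hence the $C_{\ell-2j}$, arises from the Eichler integral via the partial-fraction identity $\pi\cot(\pi x)=\lim_{N\to\infty}\sum_{-N\le r\le N}\frac{1}{x+r}$ in Lemma~\ref{lem:lem46}, and the correction term in $\d_{k,\vp,\nu}$ and $(x+\g_{k,\vp,\nu})^{-(\ell-2j+1)}$ records precisely the summands that must be \emph{excluded} from that partial-fraction expansion.

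The step your plan is missing is the treatment of the finitely many $m\in\Z+\frac12$ with $|m-\a_{H,K}|\le\frac{1}{\gcd(k,6)}$, classified in Lemma~\ref{L:alphabadms}. For these $m$ the exponential factor behaves like $e^{\frac{\pi}{KW}\left(\frac{1}{\gcd(k,6)^2}-(\a_{H,K}-m)^2\right)}$, so they cannot be left inside either the theta sum or the Eichler integral: they would destroy both the exponential-decay bound on $\Psi$ and the Mordell-integral analysis of $\E^*$. The paper splits them off from \emph{both} pieces and recombines each such $m$'s sign term and integral term into a single error function via Lemma~\ref{lem:lem24}; it is these $\erf$ terms (nonzero exactly when $\e_{k,\vp}^\pm=1$, with $m=\a_{H,K}$ dropping out by parity) that generate the entire second block, including the $\int_0^1 t^{\cdots}I_{\cdots}$ integrals coming from $\left[\erf\right]_{z=0}=\mp\frac{\sqrt2 i}{\sqrt{3kZ}}\int_0^1e^{\frac{\pi t^2}{6kZ}}dt$. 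Without this splitting your outline does not produce the second block and the estimates for the first block fail, so you should make this step explicit before the bookkeeping of $\k(a,b,c)$, the powers of $Z$, and the Kloostermann sums can be carried out.
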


\begin{remark}
	If $\ell=0$, then Theorem \ref{thm:u2} becomes 
	\begin{multline*}
		v_0(n) = \frac{\pi}{2^\frac123^\frac94(4n+1)^\frac34}\sum_{\substack{0\le k\le\sqrt{n}\\0\le\nu\le6K-1\\0\le\vp\le\gcd(k,6)-1\\\gcd(\vp,\gcd(k,6))=1}} \hspace{-.5cm} \frac{\sqrt{\gcd(k,6)}\calK_{k,\vp,\nu}(n)}{k}\int_{-1}^1 I_{-\frac32}\left(\frac{\pi}{\sqrt{3}k}\sqrt{(4n+1)\left(1-x^2\right)}\right)\\
		\times \left(1-x^2\right)^\frac34 \left(\cot\left(\tfrac{\pi}{6k}\left(x-\gcd(k,6)\left(\nu+\tfrac12\right)+\tfrac k2-\vp\right)\right) - \d_{k,\vp,\nu}\tfrac{6k}{\pi(x+\g_{k,\vp,\nu})}\right) dx\\
		+\sum_{\substack{1\le k\le \sqrt{n}\\0\le\vp\le\gcd(k,6)-1\\\gcd(\vp,\gcd(k,6))=1}} \sum_\pm \e_{k,\vp}^\pm\frac{\sqrt{\gcd(k,6)}}{4n+1} \left(\frac{2\calK_{k,\vp,0,0,0}^{\pm,[1]}(n)}{3\sqrt{k}} I_{-2}\left(\frac{\pi}{\sqrt{3}k}\sqrt{4n+1}\right)\right.\\
		\left.\pm \frac{2\pi \calK_{k,\vp,0,0}^{\pm,[2]}(n)}{(3k)^\frac32\sqrt{4n+1}}\int_0^1 tI_{-1}\left(\frac{\pi t}{\sqrt{3}k}\sqrt{4n+1}\right) dt\right) + O\left(\frac{\log(n)}{n^\frac14}\right).
	\end{multline*}
	Following the method of \cite{CN}, it should be possible to improve the above to an exact formula.
\end{remark}

The paper is organized as follows. In Section \ref{sec:pre} we recall basic facts about some special functions as well as transformation properties of the $\eta$-function, the $\vth$-function, and the crank-generating function. In Section \ref{sec:modtrans} we prove Theorem \ref{T:psitrans} and show further transformation properties. In Section \ref{sec:mordellint}, we split off contributions of the occuring Eichler integrals. In Section \ref{sec:taylorcoeff} we determine asymptotic expansions of various functions and in Section \ref{sec:circmeth} we finish the proofs of Theorem \ref{thm:ul} and Theorem \ref{thm:u2} using the Circle Method.

\section*{Acknowledgments}

We thank Caner Nazaroglu for useful conversations, and we thank William Craig and Joshua Males for their helpful comments on an earlier draft.  We thank the two anonymous referees for providing useful feedback which improved the exposition.

\section{Preliminaries}\label{sec:pre}

\subsection{Special functions}%\label{L:WatsonIBessel}

Following \cite[pp. 404--405]{Lehner} and in particular using the representation of the $I$-Bessel function as a loop integral (as given in \cite{Watson}, \S 6.22, equation (1)) we conclude the following lemma.

\begin{lemma}\label{L:BesselBound}
	Suppose that $k\in\N$, $\nu\in\R$ and $\vth_1,\vth_2,A,B\in\R^+$ satisfy $k\ll\sqrt{n}$, $A\ll\frac nk$, $B\ll\frac1k$, and $k\vth_1$, $k\vth_2\asymp\frac{1}{\sqrt{n}}$. Then we have
	\[
		\int_{\frac kn-ik\vth_1}^{\frac kn+ik\vth_2} Z^{-\nu}\exp\left(AZ+\frac BZ\right)dZ = 2\pi i\left(\frac AB\right)^\frac{\nu-1}{2}I_{\nu-1}\left(2\sqrt{AB}\right) +
		\begin{cases}
			O\left(n^{\nu-\frac12}\right) & \text{if }\nu\ge0,\\
			\vspace{-.45cm}\\
			O\left(n^\frac{\nu-1}{2}\right) & \text{if }\nu<0.
		\end{cases}
	\]
\end{lemma}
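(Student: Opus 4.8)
The plan is to start from the classical Hankel-type loop integral representation of the $I$-Bessel function,
\[
	I_{\nu-1}(2\sqrt{AB}) = \frac{1}{2\pi i}\left(\frac{A}{B}\right)^{\frac{\nu-1}{2}} \int_{\mathcal{H}} Z^{-\nu}\exp\left(AZ + \frac{B}{Z}\right) dZ,
\]
where $\mathcal{H}$ is a Hankel contour that begins at $-\infty$ below the negative real axis, loops counterclockwise around the origin, and returns to $-\infty$ above the negative real axis (this is Watson \S 6.22 (1), after the change of variables matching $AZ + B/Z$ to the standard exponent). The idea is then to deform $\mathcal{H}$ so that it agrees with the given short vertical segment $[\frac kn - ik\vth_1, \frac kn + ik\vth_2]$ near the origin, and to estimate the contribution of the remaining ``tails'' — the two horizontal rays running off to $-\infty$ — and of the arcs needed to close the contour between the endpoints of the segment and those rays. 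Since the integrand is holomorphic on $\mathbb{C}\setminus(-\infty,0]$ and decays suitably, Cauchy's theorem lets us replace $\mathcal{H}$ by (segment) $+$ (connecting arcs) $+$ (horizontal tails), so that
\[
	\int_{\frac kn - ik\vth_1}^{\frac kn + ik\vth_2} Z^{-\nu}\exp\left(AZ + \frac{B}{Z}\right) dZ = 2\pi i\left(\frac{A}{B}\right)^{\frac{\nu-1}{2}} I_{\nu-1}(2\sqrt{AB}) - (\text{tails}) - (\text{arcs}).
\]

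The bulk of the work is bounding the tail and arc integrals under the stated size constraints $k\ll\sqrt n$, $A\ll n/k$, $B\ll 1/k$, $k\vth_1,k\vth_2\asymp n^{-1/2}$. On the horizontal tails one writes $Z = -u \pm i\varepsilon$ with $u$ ranging over $[\delta,\infty)$ for a suitable $\delta\asymp k/n$ (the real part of the segment endpoints), so that $\IRe(AZ) = -Au \le 0$ is strongly negative for large $u$ and $\IRe(B/Z)$ is bounded; the factor $e^{-Au}$ together with $|Z|^{-\nu}\le u^{-\nu}$ (or $\le \delta^{-\nu}$ when $\nu<0$) gives a convergent integral of size $O(A^{\nu-1})$ when $\nu\ge 1$ and, after splitting at $u\asymp 1/A$, size $O(\delta^{-\nu+1})$ or $O(A^{\nu-1})$ in the borderline ranges; plugging in $A\asymp n/k$, $\delta\asymp k/n$ and $k\ll\sqrt n$ collapses all of these to the claimed $O(n^{\nu-1/2})$ for $\nu\ge0$ and $O(n^{(\nu-1)/2})$ for $\nu<0$. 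The short connecting arcs, which join $\frac kn \pm ik\vth_j$ to the corresponding ray at radius $\asymp k/n$, have length $O(k/n)$, and on them $|Z|\asymp k/n$, $|e^{AZ}|\le e^{O(Ak/n)} = e^{O(1)}$, $|e^{B/Z}| \le e^{O(Bn/k)} = e^{O(1)}$, so they contribute $O((k/n)^{1-\nu})$, which is absorbed into the same bounds. One must also check that the segment itself is safely inside the domain of holomorphy and that on it $|e^{B/Z}|$ stays bounded: here $|Z|\asymp k/n$ along the segment since $|\IRe Z| = k/n$ dominates or matches $|\Im Z|\le k\vth_j \asymp k/n$, and $\IRe(1/Z) = \IRe Z/|Z|^2 \asymp n/k$, so $|e^{B/Z}| = e^{O(1)}$ — this is exactly why the hypotheses $B\ll 1/k$ and $k\vth_j\asymp n^{-1/2}$ are imposed.

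The main obstacle is organizing the case analysis on $\nu$ cleanly: the integral $\int_\delta^\infty u^{-\nu} e^{-Au}\,du$ behaves like $\Gamma(1-\nu)A^{\nu-1}$ when $\nu<1$ but diverges at $0$ when $\nu\ge 1$, so for $\nu\ge 1$ one genuinely needs the lower cutoff $\delta\asymp k/n$ and must track how $\delta^{1-\nu}$ versus $A^{\nu-1}$ compete; the two stated regimes ($\nu\ge 0$ and $\nu<0$) arise precisely from which of these dominates after substituting the admissible sizes, and one should double-check the boundary value $\nu=0$ (where a logarithm could a priori appear but is killed by the cutoff) and small positive $\nu$. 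I would handle this by splitting the tail at $u = 1/A$: on $[\delta,1/A]$ bound $e^{-Au}\le 1$ and integrate the power, on $[1/A,\infty)$ bound $u^{-\nu}$ by its value at $u=1/A$ (for $\nu\ge0$) or note it is decreasing, and in each piece substitute $A\asymp n/k$, $\delta\asymp k/n$, $k\ll\sqrt n$. Everything else — Cauchy's theorem for the deformation, the elementary arc bounds — is routine, so the lemma follows by assembling these estimates.
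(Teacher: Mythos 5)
Your overall strategy---Watson's loop-integral representation of $I_{\nu-1}$, deformation of the Hankel contour onto the prescribed vertical segment, and estimation of the leftover pieces---is exactly the paper's (which follows Lehner). Your tail estimate $\int_\delta^\infty u^{-\nu}e^{-Au}\,du$ with the split at $u\asymp 1/A$ is also essentially the paper's treatment of the two rays along the negative real axis (the paper additionally combines them across the branch cut into a single integral with a factor $\sin(\pi\nu)$, but that cancellation is not needed for the bound), granting that one reads the hypotheses as $A\asymp n/k$, as the application requires and as both you and the paper implicitly do.

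There is, however, a concrete geometric error in your treatment of the connecting pieces. You place the arcs ``at radius $\asymp k/n$'' with length $O(k/n)$ and $|Z|\asymp k/n$ on them. But the endpoints of the segment are $\frac kn\pm ik\vth_j$ with $k\vth_j\asymp n^{-1/2}$, and since $k\ll\sqrt n$ one has $k/n\ll n^{-1/2}$; hence $\left|\frac kn\pm ik\vth_j\right|\asymp n^{-1/2}$, which is in general much larger than $k/n$. No arc of radius $\asymp k/n$ passes through these points, and any path joining them to the negative real axis has length $\gg n^{-1/2}$, not $O(k/n)$. The same conflation of the two scales $k/n$ and $n^{-1/2}$ reappears in your assertion that $|\Im Z|\le k\vth_j\asymp k/n$ on the segment, and in the claim that $|e^{B/Z}|=e^{O(1)}$ there: at the real-axis crossing $\re(1/Z)=n/k$, so $|e^{B/Z}|$ can be as large as $e^{Cn/k^2}$ (harmless, since the segment integral is the quantity being computed rather than bounded, but the assertion is false). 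The repair is what the paper does: connect the segment to the rays by the rectangle with horizontal sides at $\Im Z=-k\vth_1$ and $\Im Z=k\vth_2$ (length $2k/n$, with $|Z|\ge k\vth_j\asymp n^{-1/2}$ there) and vertical sides at $\re Z=-\frac kn$ (length $k\vth_j\asymp n^{-1/2}$, with $|Z|\ge k/n$ there). On all of these $\re Z\le \frac kn$ and $\re\left(\frac1Z\right)\le\frac{k/n}{(k\vth_j)^2}\asymp k$, so $e^{AZ+B/Z}=e^{O(1)}$, and the resulting contributions fall within the claimed error terms in both cases $\nu\ge0$ and $\nu<0$. So the lemma does follow by your method, but the arc estimate as written rests on an impossible contour and must be replaced by this (or an equivalent) computation.
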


 Our proof is based on \cite[pp. 404--405]{Lehner} and is reproduced here for completeness.

\begin{proof}[Proof sketch.]\phantom{\qedhere} 
	\hspace{-1cm} The following integral representation follows from \cite{Watson}, \S 6.22, equation (1):
	
	\begin{multline*}
		\hspace{-2mm} 2\pi i\left(\frac AB\right)^\frac{\nu-1}{2}I_{\nu-1}\left(2\sqrt{AB}\right) = \left(\int_{-\infty}^{-\frac kn}+\int_{-\frac kn}^{-\frac kn-ik\vth_1}+\int_{-\frac kn-ik\vth_1}^{\frac kn-ik\vth_1}+ \int_{\frac kn-ik\vth_1}^{\frac kn+ik\vth_2}+\int_{\frac kn+ik\vth_2}^{-\frac kn+ik\vth_2}+ \int_{-\frac kn+ik\vth_2}^{-\frac kn} \right. \\ \left. +\int_{-\frac kn}^{-\infty}\right) Z^{-\nu}\exp\left(AZ+\frac BZ\right) dZ =: J_1 + J_2 + \ldots + J_7.
	\end{multline*}
	Note that $J_4$ is the left-hand side of Lemma \ref{L:BesselBound}, so it remains to bound the other integrals.
	
	For $J_2$ and $J_6$, we use $\re(Z)=-\frac kn<0$ and $\re(\frac1Z)=\frac{\re(Z)}{|Z|^2}<0$, to obtain $|J_2|,|J_6|\ll n^{\nu-\frac{1}{2}}$. For $J_3$ and $J_5$, we use $\re(Z)\leq \frac kn<0$ and $\re(\frac1Z)\ll k$, to show that
	$$
	|J_3|,|J_5|\ll \begin{cases} n^{\nu-\frac{1}{2}} & \text{if $\nu\geq 0,$} \\ n^{\frac{\nu-1}{2}} & \text{if $\nu < 0$.} \end{cases}
	$$
	Finally, we combine
	\begin{align*}
	J_1 + J_7  &=-2i\sin(\pi \nu)\int_{-\frac{k}{n}}^{-\infty} |Z|^{-\nu}\exp\left(AZ+\frac{B}{Z}\right) dZ.
	\end{align*}
	The lemma follows by bounding
	\begin{numcases}{|J_1+J_7| \ll}
		n^{\nu-\frac{1}{2}} & \text{if $\nu>1,$} \notag \\ 
		n^{\frac{\nu-1}{2}} & \text{if $\nu \leq 1.$} \tag*{$\square$}
	\end{numcases}  
\end{proof}

We also require certain functions occurring in the transformation law of our unimodal rank generating functions. Define for $z\in\C$, $\t\in\H$
\[
	f_\nu(z;\t) := \frac{\sin(\pi z)}{\sinh\left(\frac{\pi z}{\t}\right)} e^\frac{\pi\nu z^2}{\t}.
\]
By Lemma 3.1 of \cite{BMR}, we have
\begin{equation}\label{E:f_kdef}
	f_\nu(z;\t) = \sum_{r\ge0} \frac{(2\pi iz)^{2r}}{(2r)!}\sum_{\substack{a,b,c\geq0\\a+b+c=r}} \nu^a\k(a,b,c)\t^{1-a-2c},
\end{equation}
where, with $B_\nu(x)$ the $\nu$-th Bernoulli polynomial,
\begin{equation}\label{E:kappadef}
	\k(a,b,c) := \frac{(2(a+b+c))!(-1)^{a+c}B_{2c}\left(\frac12\right)}{a!(2b+1)!(2c)!\pi^a4^{a+b}}.
\end{equation}
We let $b_r(\nu;\t):=\sum_{\substack{a,b,c\ge0\\a+b+c=r}}\nu^a\k(a,b,c)\t^{1-a-2c}$. A direct calculation yields the following lemma.

\begin{lemma}\label{lem:boundsfnuck}
	Let $k\ll\sqrt{n}$ and suppose that $Z=\frac kn-ik\TH$ with $-\vth_1\le\TH\le\vth_2$, where $k\vartheta_1, k\vartheta_2 \asymp \frac{1}{\sqrt{n}}$. Then we have 
	\[
		b_r(jk;Z) \ll_j |Z|^{1-2r}.
	\]
\end{lemma}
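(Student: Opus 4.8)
The claim is a routine size estimate once one unpacks the definition of $b_r(\nu;\t) = \sum_{a+b+c=r}\nu^a\k(a,b,c)\t^{1-a-2c}$, so the plan is to bound each summand individually and then sum the finitely many terms. First I would substitute $\nu = jk$ and $\t = Z$ into the general term, getting $(jk)^a\k(a,b,c)Z^{1-a-2c}$. Since $a+b+c=r$ is fixed, the exponent on $Z$ is $1-a-2c$, which I want to compare against the target exponent $1-2r = 1-2a-2b-2c$. The difference is $(1-a-2c)-(1-2a-2b-2c) = a+2b \ge 0$, so we have $|Z|^{1-a-2c} = |Z|^{1-2r}\cdot|Z|^{a+2b}$, and it remains to see that $|jk|^a|Z|^{a+2b}\ll_j 1$ uniformly over the allowed range of $Z$. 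This is where the hypothesis $k\ll\sqrt n$ and $Z = \frac kn - ik\TH$ with $k\TH \asymp \frac{1}{\sqrt n}$ enters: it gives $|Z| \asymp k|\TH| \ll \frac{1}{\sqrt n} \cdot \frac{1}{k}\cdot k$... more carefully, $|Z| \le \frac kn + k|\TH|$, and since $k\TH \asymp n^{-1/2}$ while $\frac kn \le \frac{1}{\sqrt n}\cdot\frac{1}{\sqrt n}\cdot\sqrt n = \frac{1}{\sqrt n}$ (using $k\ll\sqrt n$), we get $|Z|\ll \frac{1}{\sqrt n}$, hence $k|Z|\ll \frac{k}{\sqrt n}\ll 1$. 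Therefore $|jk|^a|Z|^{a+2b} = j^a (k|Z|)^a |Z|^{2b} \ll_j 1$ since $k|Z|\ll 1$ and $|Z|\ll 1$ (indeed $|Z|\to 0$), while $a,b\le r$ are bounded.

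Assembling this, each of the (at most $\binom{r+2}{2}$) summands is $\ll_j |\k(a,b,c)|\,|Z|^{1-2r}$, and since $\k(a,b,c)$ depends only on the fixed data $a,b,c$ (bounded because $a+b+c=r$), summing gives $b_r(jk;Z) \ll_{j,r} |Z|^{1-2r}$, which is the asserted bound (the implied constant's dependence on $r$ is harmless since $r$ will be a fixed Taylor index in the applications). I would also note that one should double-check the lower bound $|Z|\gg k|\TH| \gg \frac{1}{\sqrt n}\cdot\frac1k$ is not actually needed here — the estimate only requires an upper bound on $|Z|$ and on $k|Z|$ — which simplifies matters, although it is available from $k\vartheta_i\asymp n^{-1/2}$ should a two-sided comparison ever be wanted.

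The only mild subtlety, and the one I would be most careful about, is the uniformity: the bound must hold uniformly in $Z$ across the strip $-\vartheta_1\le\TH\le\vartheta_2$ and uniformly in $k$ in the range $k\ll\sqrt n$, with the implied constant depending only on $j$ (and $r$). This is automatic from the chain above because every inequality used ($k\ll\sqrt n$, $k\vartheta_i\asymp n^{-1/2}$, hence $|Z|\ll n^{-1/2}$ and $k|Z|\ll 1$) is uniform in exactly those parameters. There is no genuine obstacle; the lemma is a bookkeeping step isolating the growth rate $|Z|^{1-2r}$ of the building blocks $b_r$ so that it can be fed into the Bessel-integral estimate of Lemma \ref{L:BesselBound} later in the Circle Method argument.
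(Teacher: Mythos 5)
Your proof is correct and is precisely the ``direct calculation'' the paper alludes to (the paper gives no written proof): one bounds each summand $(jk)^a\k(a,b,c)Z^{1-a-2c}$ by writing $|Z|^{1-a-2c}=|Z|^{1-2r}|Z|^{a+2b}$ and using $k|Z|\ll1$ and $|Z|\ll1$, which follow from $k\ll\sqrt n$ and $k\vth_1,k\vth_2\asymp n^{-1/2}$. Nothing further is needed.
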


Finally, we require the following integral evaluation (see below equation (3,4) of \cite{CN}).

\begin{lemma}\label{lem:lem24}
	 Let $V\in\C$ with $\re(V)>0$ and $a\in\R\setminus\{0\}$. Then for any $\e>0$,
	\[
		\int_{-V}^{\infty-i\e} \frac{e^{-\pi a^2\zz}}{\sqrt{-\zz}} d\zz = \frac{1}{ia}\left(\sgn(a)+\erf\left(ia\sqrt{\pi V}\right)\right),
	\]
	where the path of integration avoids the branch cut of $\sqrt{-\zz}$ along the positive real axis.
\end{lemma}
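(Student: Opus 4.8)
The plan is to split the contour at the origin and evaluate the two resulting pieces separately. Since the integrand $\frac{e^{-\pi a^2\zz}}{\sqrt{-\zz}}$ is holomorphic on $\C\setminus[0,\infty)$, which is simply connected, and decays as $\re(\zz)\to+\infty$, the value of the integral does not depend on the chosen path inside this domain; I therefore take the path to consist of the straight segment from $-V$ to $0$ (which lies in $\{\re\le0\}$, hence avoids the branch cut) followed by a path running off to $+\infty$ along the lower side of the positive real axis, and write $\int_{-V}^{\infty-i\e}=I_1+I_2$ accordingly.

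For $I_1=\int_{-V}^0\frac{e^{-\pi a^2\zz}}{\sqrt{-\zz}}\,d\zz$, I substitute $u=\sqrt{-\zz}$ (principal branch), so $\zz=-u^2$ and $d\zz=-2u\,du$; as $\zz$ runs from $-V$ to $0$ the point $-\zz$ runs through the right half-plane from $V$ to $0$, so $u$ runs from $\sqrt V$ to $0$ with $\re(u)>0$ throughout (in particular $\sqrt{u^2}=u$), and one gets $I_1=2\int_0^{\sqrt V}e^{\pi a^2u^2}\,du$. With $\erfi(x):=\frac{2}{\sqrt\pi}\int_0^xe^{t^2}\,dt$ and the identity $\erfi(x)=-i\,\erf(ix)$, the substitution $t=\sqrt\pi\,|a|\,u$ gives $I_1=\tfrac{1}{|a|}\erfi\!\big(|a|\sqrt{\pi V}\big)=\tfrac{1}{ia}\erf\!\big(ia\sqrt{\pi V}\big)$, where the cases $a>0$ and $a<0$ both reduce to this since $\erfi$ is odd. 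For $I_2=\int_0^{\infty-i\e}\frac{e^{-\pi a^2\zz}}{\sqrt{-\zz}}\,d\zz$, on the lower side of $(0,\infty)$ the principal branch gives $\sqrt{-\zz}=i\sqrt x$ with $x=\re(\zz)$, so $I_2=\frac1i\int_0^\infty x^{-1/2}e^{-\pi a^2x}\,dx=\frac{1}{i|a|}=\frac{\sgn(a)}{ia}$ by the standard evaluation $\int_0^\infty x^{-1/2}e^{-\alpha x}\,dx=\sqrt{\pi/\alpha}$ (here $\alpha=\pi a^2>0$). Adding the two pieces yields $\int_{-V}^{\infty-i\e}\frac{e^{-\pi a^2\zz}}{\sqrt{-\zz}}\,d\zz=\frac{1}{ia}\big(\sgn(a)+\erf(ia\sqrt{\pi V})\big)$, as claimed.

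The computation is routine, so there is no genuine obstacle; the one point that needs care — and the only thing worth writing out in detail — is the branch bookkeeping for $\sqrt{-\zz}$: checking that $u=\sqrt{-\zz}$ stays in $\{\re>0\}$ along the segment (this is where the hypothesis $\re(V)>0$ enters, guaranteeing $-\zz$ avoids $(-\infty,0]$) and that the principal square root equals $+i\sqrt x$ rather than $-i\sqrt x$ on the lower side of the cut. As an independent sanity check, differentiating both sides of the asserted identity in $V$ gives $e^{\pi a^2V}/\sqrt V$ on each side, and both sides tend to $\frac{\sgn(a)}{ia}=\frac{1}{i|a|}$ as $V\to0^+$, so they agree on the connected region $\re(V)>0$ by analytic continuation.
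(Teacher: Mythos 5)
Your proof is correct. The paper itself does not prove this lemma but only cites the derivation below equation (3.4) of \cite{CN}, so there is no in-text argument to compare against; your computation is a complete, self-contained verification along the standard lines. The two points that actually require care are exactly the ones you isolate: (i) the contour deformation is legitimate because the integrand is holomorphic on the simply connected domain $\C\setminus[0,\infty)$, decays like $e^{-\pi a^2\re(\zz)}$ as $\re(\zz)\to+\infty$, and has an integrable singularity at the origin, so the path may be taken as the segment from $-V$ to $0$ followed by the lower edge of the cut; and (ii) the branch bookkeeping, namely that $-\zz$ traverses the right half-plane on the segment (using $\re(V)>0$), so $u=\sqrt{-\zz}$ stays in $\{\re(u)>0\}$ and the substitution gives $2\int_0^{\sqrt V}e^{\pi a^2u^2}\,du=\frac{1}{ia}\erf\bigl(ia\sqrt{\pi V}\bigr)$, while on the lower edge $\sqrt{-\zz}=i\sqrt{x}$ yields $\frac{1}{i|a|}=\frac{\sgn(a)}{ia}$ via the Gaussian integral. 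Both sign cases of $a$ are handled correctly through the oddness of $\erf$ and $\erfi$, and your closing check by differentiation in $V$ confirms the identity on all of $\re(V)>0$.
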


\subsection{Modular forms and Jacobi forms}

We define $\calC^*(z;\t):=q^{-\frac{1}{24}}{C}(\z;q)$. The classical {\it Jacobi theta function} is given as
$$
	\vartheta(z;\tau):=i\sum_{m \in \mathbb{Z}+\frac{1}{2}}(-1)^{m-\frac{1}{2}}q^{\frac{m^2}{2}}\zeta^m.
$$
For $\pmat{a&b\\c&d}\in\SL_2(\Z)$, the multiplier for the Dedekind eta function is defined as 
\begin{equation}\label{E:etamultiplier}
	\chi\abcd :=
	\begin{cases}
		\left(\frac{d}{|c|}\right)e^{\frac{\pi i}{12}\left((a+d)c-bd\left(c^2-1\right)-3c\right)} & \text{if }c\text{ is odd},\\
		\vspace{-.4cm}\\
		\left(\frac cd\right)e^{\frac{\pi i}{12}\left(ac\left(1-d^2\right)+d(b-c+3)-3\right)} & \text{if }c\text{ is even},
	\end{cases}
\end{equation}
where $(\frac\cdot\cdot)$ is the Kronecker symbol. We have the following transformation laws (see \cite{Kn}).

\begin{lemma}\label{L:etathetaCstartrans}
	For $\re(Z)>0$ and $\gcd(h,k)=1$, we have 
	\begin{align*}%\label{E:etatrans}
		\eta\left(\frac hk+\frac{iZ}{k}\right) &= \chi\mat{[-h]_k&-\frac{h[-h]_k+1}{k}\\k&-h}^{-1} \frac{1}{\sqrt{iZ}} \eta\left(\frac{[-h]_k}{k}+\frac{i}{kZ}\right),\\
		\vth\left(z;\frac hk+\frac{iZ}{k}\right) &= \chi\mat{[-h]_k&-\frac{h[-h]_k+1}{k}\\k&-h}^{-3} \frac{1}{\sqrt{iZ}} e^{-\frac{\pi kz^2}{Z}} \vth\left(\frac{z}{iZ};\frac{[-h]_k}{k}+\frac{i}{kZ}\right),\\
		%\label{E:Cstartrans}
		\calC^*\left(z;\frac hk+\frac{iZ}{k}\right) &= \frac{\sin(\pi z)}{\sin\left(\frac{\pi z}{iZ}\right)} \chi\mat{[-h]_k&-\frac{h[-h]_k+1}{k}\\k&-h} \frac{1}{\sqrt{iZ}} e^\frac{\pi kz^2}{Z} \calC^*\left(\frac{z}{iZ};\frac{[-h]_k}{k}+\frac{i}{kZ}\right).
	\end{align*}
\end{lemma}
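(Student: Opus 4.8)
The plan is to realise the map $\t\mapsto\frac hk+\frac{iZ}{k}$ as the action of a single element of $\SL_2(\Z)$ and then read off each of the three identities from the classical transformation law of the corresponding modular object, treating $\calC^*$ as a ratio of the other two.

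First I would set $h':=[-h]_k$ and
\[
	\gamma := \mat{h' & -\frac{hh'+1}{k} \\ k & -h}.
\]
Since $-hh'\equiv 1\Pmod k$, the entry $\frac{hh'+1}{k}$ is an integer, and $\det\gamma = -hh'+(hh'+1)=1$, so $\gamma\in\SL_2(\Z)$. Writing $\t=\frac{h+iZ}{k}$, one checks directly that $c\t+d=k\t-h=iZ$ and
\[
	\gamma\t=\frac{h'\t-\frac{hh'+1}{k}}{iZ}=\frac{h'(h+iZ)-(hh'+1)}{kiZ}=\frac{h'iZ-1}{kiZ}=\frac{i(h'Z+i)}{kiZ}=\frac{h'}{k}+\frac{i}{kZ},
\]
which is exactly the second $\eta$/$\vth$/$\calC^*$-argument appearing on each right-hand side. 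Hence every one of the three claimed formulas is simply the modularity of $\eta$, $\vth$, resp.\ $\calC^*$ under $\gamma$, with $c\t+d$ specialised to $iZ$. For $\eta$ and $\vth$ this is immediate from the standard $\SL_2(\Z)$-transformations (see \cite{Kn}): from $\eta(\gamma\t)=\chi(\gamma)(c\t+d)^\frac12\eta(\t)$ and $\vth\!\left(\frac{z}{c\t+d};\gamma\t\right)=\chi(\gamma)^3(c\t+d)^\frac12 e^{\frac{\pi i c z^2}{c\t+d}}\vth(z;\t)$, solving for $\eta(\t)$, resp.\ $\vth(z;\t)$, and using $c\t+d=iZ$ together with $e^{\frac{\pi i k z^2}{iZ}}=e^{\frac{\pi k z^2}{Z}}$, gives the first two displays.

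For $\calC^*$ I would first record, via the Jacobi triple product expansions of $\vth$ and $\eta$, the identity
\[
	\calC^*(z;\t)=q^{-\frac1{24}}C(\z;q)=-\frac{2\sin(\pi z)\,\eta(\t)^2}{\vth(z;\t)},
\]
and then combine the two transformations just used: the $\chi$-powers collapse to $\chi(\gamma)^3/\chi(\gamma)^2=\chi(\gamma)$, the powers of $c\t+d$ to $(c\t+d)/(c\t+d)^{\frac12}=(iZ)^{\frac12}$, the exponential to $e^{\frac{\pi k z^2}{Z}}$, and the remaining $\sin$-factors assemble into $\frac{\sin(\pi z)}{\sin(\pi z/(iZ))}$, which is the asserted formula. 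I expect the only genuine work here to be bookkeeping: fixing the correct power of the $\eta$-multiplier attached to $\vth$ in the present normalisation, keeping the principal branch of $\sqrt{iZ}$ consistent through the three identities and through the division in the $\calC^*$ step, and tracking the sign of the quadratic exponential $e^{\pm\pi k z^2/Z}$. None of this is deep, but it is where care is needed; everything else follows mechanically from the matrix identity $\gamma\t=\frac{h'}{k}+\frac{i}{kZ}$, $c\t+d=iZ$.
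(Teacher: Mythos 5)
Your proposal is correct and is essentially the argument the paper has in mind: the lemma is stated without proof (only a citation to Knopp), and the intended derivation is exactly to recognize $\gamma=\left(\begin{smallmatrix}[-h]_k&-\frac{h[-h]_k+1}{k}\\k&-h\end{smallmatrix}\right)\in\SL_2(\Z)$ with $c\t+d=iZ$ and $\gamma\t=\frac{[-h]_k}{k}+\frac{i}{kZ}$, apply the standard $\eta$- and $\vth$-transformations, and deduce the $\calC^*$-formula from $\calC^*(z;\t)=-\frac{2\sin(\pi z)\eta(\t)^2}{\vth(z;\t)}$ (which your triple-product computation correctly establishes, and which makes the multiplier $\chi(\gamma)^{3}/\chi(\gamma)^{2}=\chi(\gamma)$, the weight $1-\tfrac12=\tfrac12$, and the exponential $e^{\pi kz^2/Z}$ come out as claimed).
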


\section{Modular transformations and the proof of Theorem \ref{T:psitrans}}\label{sec:modtrans}

\subsection{Rewriting the generating functions}

We use identities of Ramanujan \cite{Andrews} and Kim--Lovejoy \cite{KL,KL2} to rewrite $U(\z;q)$ and $V(\z;q)$ in terms of $\vth,\eta,\calC^*,\psi$, and certain sparse series. Define
\begin{align*}
	\calU_1(z;\t) &:= -\frac{i\calC^*(z;\t)}{2\eta(\t)}(\vth(2z;\t)+\psi(2z;\t)),\qquad \calV_1(z;\t) := -q^\frac{1}{12}\z^{-\frac12}\frac{\calC^*(z;\t)}{\eta(\t)}\psi\left(3z-\t+\frac12;6\t\right),\\
	H_1(\z;q) &:= (1-\z)\sum_{m\ge0} (-1)^m\left(1-\z^2q^{2m+1}\right)q^\frac{m(3m+1)}{2}\z^{3m},\qquad H_2(\z;q) := (1-\z)\sum_{m\ge0} q^{m(m+1)}\z^m.
\end{align*}

\begin{lemma}\label{L:genfnsrewrite}
	Let $z\in\R$.
	\begin{enumerate}[leftmargin=*,label=\rm{(\arabic*)}]
		\item We have
		\[
			U(\z;q) = q^{-\frac{1}{24}}\calU_1(z;\t) + H_1(\z;q).
		\]

		\item For $\ell\in2\N_0$, we have
		\[
			\left[\frac{\del^\ell}{\del z^\ell} V(\z;q)\right]_{z=0} = \left[\frac{\del^\ell}{\del z^\ell} \left(q^{-\frac14}\calV_1(z;\t)+H_2(\z;q)\right)\right]_{z=0}.
		\]
	\end{enumerate}
\end{lemma}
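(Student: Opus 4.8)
The plan is to derive both identities from classical $q$-series identities, reducing each generating function to a product of the Jacobi-type building blocks $\calC^*$, $\eta$, $\vth$, $\psi$ plus an explicit sparse correction term. For part (1), I would start from the Bailey-pair / Ramanujan-type identity (the one attributed to Ramanujan and recorded in \cite{Andrews}, and used by Kim--Lovejoy in \cite{KL}) that expresses $U(\z;q) = \sum_{n\ge0} q^n/(\z q,\z^{-1}q;q)_n$ as a single sum whose summand, after the standard manipulation, becomes $\frac{1}{(\z q,\z^{-1}q;q)_\infty}$ times a bilateral-type theta sum. Writing $(q;q)_\infty/\big((\z q;q)_\infty(\z^{-1}q;q)_\infty\big) = \calC^*(z;\t)q^{1/24}$ and recognizing the remaining sum as a combination of $\vth(2z;\t)$ and the false theta $\psi(2z;\t)$ (which accounts for the sign factor $\sgn(m+z_2/\t_2)$; here $z\in\R$ so $z_2=0$ and the $\sgn$ is simply $\sgn(m)$) together with a finite/sparse remainder, gives $U(\z;q) = q^{-1/24}\calU_1(z;\t) + (\text{sparse})$. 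I would then identify the sparse remainder explicitly and check it equals $H_1(\z;q)$; this is where one must be careful about which terms of the theta-type sum survive as a genuine $q$-series versus which collapse into the sparse piece. The factor $2z$ rather than $z$ is forced by the $(\z q;q)_n(\z^{-1}q;q)_n$ in the denominator of $U$, which upon completing squares produces $\z^{\pm 1}$-powers pairing up as $\z^{\pm 2m}$.

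For part (2), the key input is the Kim--Lovejoy identity \cite[Proposition 3.1]{KL2}, $V(\z;q) = \sum_{m\ge0} \frac{(q^{m+1};q)_m q^m}{(\z q,\z^{-1}q;q)_m}$, which already exhibits the denominator $(\z q,\z^{-1}q;q)_m$ and hence the crank-type factor. Writing $(q^{m+1};q)_m = (q;q)_{2m}/(q;q)_m$ and $(q;q)_\infty/(\z q,\z^{-1}q;q)_\infty = \calC^*(z;\t)q^{1/24}$, one is left with a sum over $m$ of a quadratic-exponent term times a ratio of $q$-Pochhammer symbols; this is precisely the shape that resolves (via another classical identity, again from \cite{KL,KL2} or Ramanujan's notebooks) into $\calC^*(z;\t)/\eta(\t)$ times a \emph{shifted} false theta $\psi(3z - \t + \tfrac12; 6\t)$, plus a sparse series. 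The shift $-\t+\tfrac12$ and the modulus $6\t$ come from completing the square in $q^{\ell^2+m}\z^\ell$ against the Durfee-square bookkeeping $0\le\ell\le m$; the prefactor $-q^{1/12}\z^{-1/2}$ absorbs the leftover monomial. Since we only need the statement after applying $\partial^\ell/\partial z^\ell|_{z=0}$ for even $\ell$, I would not need the two sides to agree as functions of $z$ — only their even Taylor coefficients at $z=0$ — which lets me discard any odd-in-$z$ discrepancy between $V(\z;q)$ and $q^{-1/4}\calV_1(z;\t)+H_2(\z;q)$; invoking the $\z\mapsto\z^{-1}$ symmetry of $V$ (footnoted in the text) and of $\calC^*$ and $\psi(3z-\t+\tfrac12;6\t)$ makes this clean.

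The main obstacle is the bookkeeping in identifying the sparse series $H_1$ and $H_2$ \emph{exactly}: the classical identities typically come with partial theta functions plus finite correction sums, and matching those corrections — including getting the $(1-\z)$ prefactors, the $(-1)^m$, the exponents $\tfrac{m(3m+1)}{2}$ and $m(m+1)$, and the $\z$-powers $\z^{3m}$, $\z^m$ right — requires carefully tracking every boundary term when converting a one-sided sum into a theta-type sum over $\Z+\tfrac12$. A secondary subtlety is confirming that the chosen branch conventions for $\psi$ (the $\sgn(m+z_2/\t_2)$ versus $\sgn(m)$ issue flagged in the footnote) cause no trouble: since we impose $z\in\R$, we have $z_2=0$, so $\sgn(m+z_2/\t_2)=\sgn(m)$ and the definitions coincide, but one should also note that in $\calV_1$ the relevant argument is $3z-\t+\tfrac12$, whose imaginary part is $-\t_2$, so strictly one works with the limiting definition \eqref{limit} of $\psi$ there and this must be spelled out.
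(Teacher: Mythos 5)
Your proposal follows essentially the same route as the paper: part (1) is quoted from the Ramanujan/Kim--Lovejoy identity with the observation that $\sgn\big(m+\frac{z_2}{\t_2}\big)=\sgn(m)$ for $z\in\R$, and part (2) rests on Kim--Lovejoy's Proposition 3.1 (the paper invokes the decomposition $V=C\cdot G_2+H_2$ directly rather than rederiving it from the single-sum form) together with exactly the evenness trick you describe --- replacing one of the two one-sided pieces of the partial theta sum by its image under $\z\mapsto\z^{-1}$, which is harmless for even $\ell$, so that the two pieces fold into the bilateral shifted $\psi$. One small correction: the subtlety with the non-real argument $3z-\t+\tfrac12$ is not resolved via the limiting definition \eqref{limit}, but simply by noting that $\big|\Im\big(3z-\t+\tfrac12\big)/\Im(6\t)\big|=\tfrac16<\tfrac12$, so that $\sgn\big(m+\frac{z_2}{\t_2}\big)=\sgn(m)$ for every $m\in\Z+\tfrac12$ and the two sign conventions for $\psi$ coincide.
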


\begin{proof}
	(1) Since $\sgn(m+\frac{z_2}{\t_2})=\sgn(m)$ for $z\in\R$, the claim follows directly from \cite[entry 6.3.2]{LostNotebook} (see also \cite[Proposition 2.1]{KL}).\\
	(2) We begin with (see \cite[Proposition 3.1]{KL2}), 
	\[
		V(\z;q) = C(\z;q)G_2(\z;q) + H_2(\z;q),
	\]
	where
	\[
		G_2(\z;q) := \frac{1}{(q;q)_\infty}\sum_{m\ge0} \left(1-\z q^{2m+1}\right)   q^{3m^2+2m}\z^{3m+1}.
	\]
	
	We next rewrite $G_2$ as $G_2(\z;q)=G_2^{[1]}(\z;q)+G_2^{[2]}(\z;q)$, where
	\[
	G_2^{[1]}(\z;q) := \frac{1}{(q;q)_{\infty}}\sum_{m\ge0} q^{3m^2+2m}\z^{3m+1},\qquad
	%	= -iq^{-1}\z^{-\frac12} \sum_{m\ge0} (-1)^m q^{\frac{6m^2}{2}} e^{2\pi i\left(m+\frac12\right)\left(3z+2\t+\frac12\right)}\\
	%	&\hspace{.1cm}= -\frac i2q^{-\frac12}\z^{-\frac12} \sum_{m\in\Z} \left(1+\sgn\left(m+\frac12\right)\right) (-1)^mq^{\frac{6m^2}{2}}e^{2\pi i\left(m+\frac12\right)\left(3z+2\t+\frac12\right)}\\
	%	&\hspace{.1cm}= -\frac i2q^{-\frac12}\z^{-\frac12} \left(\vth\left(3z+2\t+\frac12;6\t\right)+\psi\left(3z+2\t+\frac12;6\t\right)\right).
	%\end{align*}
	%\begin{align*}
	G_2^{[2]}(\z;q) := -\frac{1}{(q;q)_{\infty}}\sum_{m\ge0} q^{3m^2+4m+1}\z^{3m+2}.
	%	= iq^{-1}\z^{-\frac12} \sum_{m\ge0} (-1)^mq^{\frac{6m^2}{2}} e^{2\pi i\left(m+\frac12\right)\left(3z+4\t+\frac12\right)}\\
	%	&\hspace{.1cm}= \frac i2q^{-1}\z^{-\frac12} \sum_{m\in\Z} \left(1+\sgn\left(m+\frac12\right)\right) (-1)^mq^{\frac{6m^2}{2}}e^{2\pi i\left(m+\frac12\right)\left(3z+4\t+\frac12\right)}\\
	%	&= \frac i2q^{-1}\z^{-\frac12}\left(\vth\left(3z+4\t+\frac12;6\t\right) + \psi\left(3z+4\t+\frac12;6\t\right)\right).
	\]
	In $G_2^{[2]}(\z;q)$, we make the change of variables $m\mapsto-m-1$ to obtain
	\[
		G_2^{[2]}(\z;q)=-\frac{1}{(q;q)_{\infty}}\sum_{m\le-1} q^{3m^2+2m} \z^{-3m-1}.
	\]
	Next note that for $\ell$ even
	\[
		\left[\frac{\del^\ell}{\del z^\ell}C(\z;q)G_2(\z;q)\right]_{z=0} = \left[\frac{\del^\ell}{\del z^\ell}C(\z;q) \left(G_2^{[1]}(\z;q)+G_2^{[2]}\left(\z^{-1};q\right)\right)\right]_{z=0}.
	\]
	Now combine, for $z\in\R$,
	\[
		G_2^{[1]}(\z;q) + G_2^{[2]}\left(\z^{-1};q\right) = -\frac{q^{-\frac{1}{4}}\z^{-\frac12} }{(q;q)_{\infty}} \psi\left(3z-\t+\frac12;6\t\right),
	\]
	where we use that for $z\in\R$ we have $|\frac{\Im(3z-\t+\frac12)}{\Im(6\t)}|=\frac16<\frac12$. From this, we obtain the claim.
\end{proof}

\subsection{Proof of Theorem \ref{T:psitrans}}

We are now ready to prove Theorem \ref{T:psitrans}.

\begin{proof}[Proof of Theorem \ref{T:psitrans}]
	 We first note that
	\begin{equation}\label{limit}
		\lim_{t\to\infty} \wh\psi(z;\t,\t+it+\e) = \psi(z;\t),
	\end{equation}
	which follows from
	\[
		\lim_{|z|\to\infty} \erf(z) =
		\begin{cases}
			1 & \text{if }|\Arg(z)|<\frac\pi4,\\
			-1 & \text{if }\frac{3\pi}{4}<|\Arg(z)|\le\pi.
		\end{cases}
	\]
	We write ($w\in \H $ can be choosen freely)
	\[
		\wh\psi(z;\t,w) = \psi(z;\t) + \psi^*(z;\t,w).
	\]
	Using that $\erf'(x)=\frac{2}{\sqrt{\pi}}e^{-x^2}$, we obtain, away from the branch cut,
	$$
		\frac{\del}{\del w}\wh\psi(z;\t,w)=\frac{i}{\sqrt{i(w-\tau)}}\sum_{m \in \mathbb{Z}+\frac{1}{2}}(-1)^{m-\frac{1}{2}}\left(m+\frac{z_2}{\t_2}\right)q^{\frac{m^2}{2}}\zeta^me^{\pi i(w-\t)\left(m+\frac{z_2}{\t_2}\right)^2}.
	$$
	From \eqref{limit} we thus obtain 
	\[
		\psi^*(z;\t,w) = -ie^{-\frac{\pi i\t z_2^2}{\t_2^2}} \int_w^{\t+i\infty+\e} e^\frac{\pi iz_2^2\zz}{\t_2^2} \frac{\sum_{m\in\Z+\frac12} (-1)^{m-\frac12}\left(m+\frac{z_2}{\t_2}\right)e^{\pi i\left(m^2\zz+2m\left(z+(\zz-\t)\frac{z_2}{\t_2}\right)\right)}}{\sqrt{i(\zz-\t)}} d\zz.
	\]
	Now recall that by \Cref{T:CNtrans} we have, for $\pabcd\in\SL_2(\Z)$,
	\begin{equation*}\label{E:psihatmodularity}
		\wh\psi\left(\frac{z}{c\t+d};\frac{a\t+b}{c\t+d},\frac{aw+b}{cw+d}\right) = \e_{\t,w}\abcd \chi\abcd^3 (c\t+d)^\frac12 e^\frac{\pi icz^2}{c\t+d} \wh{\psi}(z;\t,w),
	\end{equation*}
	where 
	\begin{equation}\label{E:epsilonbranchcutfactor}
		\e_{\t,w}\abcd := \sqrt{\frac{i(w-\t)}{(c\t+d)(cw+d)}} \frac{\sqrt{c\t+d}\sqrt{cw+d}}{\sqrt{i(w-\t)}}.
	\end{equation}
	Then we have 
	\begin{multline*}
		\psi\left(\frac{z}{c\t+d};\frac{a\t+b}{c\t+d}\right) - \e_{\t,w}\abcd \chi\abcd^3 (c\t+d)^\frac12 e^\frac{\pi icz^2}{c\t+d} \psi(z;\t)\\
		= -\psi^*\left(\frac{z}{c\t+d};\frac{a\t+b}{c\t+d},\frac{aw+b}{cw+d}\right) + \e_{\t,w}\abcd \chi\abcd^3 (c\t+d)^\frac12 e^\frac{\pi icz^2}{c\t+d} \psi^*(z;\t,w).
	\end{multline*}
	Now letting $w\to\t+i\infty+\e$ the integral defining $\psi^*(z;\t,w)$ vanishes and we are left with
	\begin{multline*}
		-\lim_{w\to\t+i\infty+\e} \psi^*\left(\frac{z}{c\t+d};\frac{a\t+b}{c\t+d},\frac{aw+b}{cw+d}\right) = ie^{-\pi i\frac{a\t+b}{c\t+d}\left(\frac{\im\left(\frac{z}{c\t+d}\right)}{\im\left(\frac{a\t+d}{c\t+d}\right)}\right)^2} \int_\frac ac^{\frac{a\t+b}{c\t+d}+i\infty+\e} e^{\pi i\left(\frac{\im\left(\frac{z}{c\t+d}\right)}{\im\left(\frac{a\t+ d}{c\t+d}\right)}\right)^2\zz}\\
		\times \frac{\sum_{m\in\Z+\frac12} (-1)^{m-\frac12} \left(m+\frac{\im\left(\frac{z}{c\t+d}\right)}{\im\left(\frac{a\t+d}{c\t+d}\right)}\right) e^{\pi i\left(m^2\zz+2m\left(\frac{z}{c\t+d}+\left(\zz-\frac{a\t+b}{c\t+d}\right) \frac{\im\left(\frac{z}{c\t+d}\right)}{\im\left(\frac{a\t+d}{c\t+ d}\right)}\right)\right)}}{\sqrt{i\left(\zz-\frac{a\t+b}{c\t+d}\right)}} d\zz,
	\end{multline*}
	using that $c>0$ and computing $\lim_{w\to\t+i\infty+\e}\e_{\t,w}\pmat{a&b\\c&d}=1$. The claim now follows from
	\[
		\frac{a\t+b}{c\t+d} = \frac ac - \frac{1}{c(c\t+d)}. \qedhere
	\]
\end{proof}

Setting $a:=[-h]_k$, $b:=-\frac{h[-h]_k+1}{k}$, $c:=k$, $d:=-h$, and $\t:=\frac hk+\frac{iZ}{k}$ in Theorem \ref{T:psitrans}, we immediately obtain the following corollary.

\begin{corollary}\label{C:generalpsitranshknotation}
	For $z\in\C$, we have
	\begin{multline*}
		\psi\left(z;\frac hk+\frac{iZ}{k}\right) = \chi\mat{[-h]_k&-\frac{h[-h]_k+1}{k}\\k&-h}^{-3} (iZ)^{-\frac12} e^{-\frac{\pi kz^2}{Z}}\\
		\times \left(\psi\left(\frac{z}{iZ};\frac{[-h]_k}{k}+\frac{i}{kZ}\right) - ie^{\frac{\pi k}{Z}\left(\frac{\re\left(z\ol Z\right)}{\re(Z)}\right)^2} \calE_\frac{[-h]_k}{k}\left(\frac{z}{iZ};\frac{[-h]_k}{k}+\frac{i}{kZ}\right)\right).
	\end{multline*}
\end{corollary}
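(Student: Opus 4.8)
The plan is to verify that $M:=\mat{[-h]_k & -\frac{h[-h]_k+1}{k}\\ k & -h}$ is an admissible element of $\SL_2(\Z)$ and then substitute $a=[-h]_k$, $b=-\frac{h[-h]_k+1}{k}$, $c=k$, $d=-h$, and $\t=\frac hk+\frac{iZ}{k}$ into Theorem \ref{T:psitrans}, simplifying each factor. Since the corollary is just a specialization of that theorem, there is no real obstacle; the work consists of a handful of elementary manipulations.

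First I would confirm admissibility: the $(1,2)$-entry is an integer because the defining relation $-h[-h]_k\equiv1\Pmod k$ gives $k\mid h[-h]_k+1$; the determinant is $-h[-h]_k+(h[-h]_k+1)=1$; and the lower-left entry $c=k$ is positive, exactly as required in Theorem \ref{T:psitrans}. Thus the theorem applies with the above data.

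Next I would simplify the automorphy factors. A direct computation gives $c\t+d=iZ$, hence $(c\t+d)^{-\frac12}=(iZ)^{-\frac12}$ and $e^{-\frac{\pi icz^2}{c\t+d}}=e^{-\frac{\pi kz^2}{Z}}$, while $\chi(M)^{-3}$ is precisely the multiplier appearing in the corollary. Using the identity $\frac{a\t+b}{c\t+d}=\frac ac-\frac1{c(c\t+d)}$ established at the end of the proof of Theorem \ref{T:psitrans}, the transformed modular variable is $\frac{[-h]_k}{k}-\frac1{kiZ}=\frac{[-h]_k}{k}+\frac i{kZ}$, which simultaneously identifies the arguments of $\psi$ and of $\calE_{[-h]_k/k}$ on the right-hand side, and $\frac z{c\t+d}=\frac z{iZ}$.

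The only step that needs a little care is the exponential prefactor of the Eichler integral. Here $\frac{\pi i}{c(c\t+d)}=\frac{\pi i}{kiZ}=\frac\pi{kZ}$, and short computations give $\im\!\left(\frac z{iZ}\right)=-\frac{\re(z\ol Z)}{|Z|^2}$ and $\im\!\left(\frac{[-h]_k}{k}+\frac i{kZ}\right)=\frac{\re(Z)}{k|Z|^2}$, so the squared ratio of these imaginary parts equals $k^2\big(\re(z\ol Z)/\re(Z)\big)^2$ and the prefactor collapses to $e^{\frac{\pi k}{Z}(\re(z\ol Z)/\re(Z))^2}$. Substituting all of this into Theorem \ref{T:psitrans} produces the stated identity. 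Throughout one takes the principal branch of the square root, which is what makes the imaginary-part manipulations in this last step legitimate.
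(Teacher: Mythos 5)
Your proposal is correct and is exactly the paper's argument: the corollary is obtained by substituting $a=[-h]_k$, $b=-\frac{h[-h]_k+1}{k}$, $c=k$, $d=-h$, $\t=\frac hk+\frac{iZ}{k}$ into Theorem \ref{T:psitrans}, and your simplifications of $c\t+d=iZ$, of the transformed variable via $\frac{a\t+b}{c\t+d}=\frac ac-\frac1{c(c\t+d)}$, and of the exponential prefactor (including the computation $\im(z/(iZ))=-\re(z\ol Z)/|Z|^2$ and $\im(\frac{[-h]_k}{k}+\frac i{kZ})=\re(Z)/(k|Z|^2)$) all check out. The paper simply states that the corollary follows immediately; your write-up supplies the same routine verifications in full.
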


\subsection{Unrestricted unimodal sequences}

Using Corollary \ref{C:generalpsitranshknotation} and Lemma \ref{L:etathetaCstartrans}, the function $\calU_1$ transforms as follows.

\begin{theorem}\label{thm:u1}
	We have for $Z\in\C$ with $\re(Z)>0$, $0\le h<k,\gcd(h,k)=1$, $z\in\R$ with $|kz|<\frac14$ 
	\begin{multline*}
		\calU_1\left(z;\frac hk+\frac{iZ}{k}\right) = -i \chi\mat{[-h]_k&-\frac{h[-h]_k+1}{k}\\k&-h}^{-1} \left(iZ\right)^{-\frac12}\\
		\times \left({}f_{3k}\left(z;-Z\right) \calU_1\left(\frac{z}{iZ};\frac{[-h]_k}{k}+\frac{i}{kZ}\right) + \tfrac12 f_k(z;Z) \frac{\calC^*\left(\frac{z}{iZ}; \frac{[-h]_k}{k}+\frac{i}{kZ}\right)}{\eta\left(\frac{[-h]_k}{k}+\frac{i}{kZ}\right)} \calE_\frac{[-h]_k}{k}\left(\frac{2z}{iZ};\frac{[-h]_k}{k}+\frac{i}{kZ}\right)\right).
	\end{multline*}	
\end{theorem}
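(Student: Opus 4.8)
The plan is to expand the definition
\[
	\calU_1(z;\t) = -\frac{i\,\calC^*(z;\t)}{2\eta(\t)}\bigl(\vth(2z;\t)+\psi(2z;\t)\bigr)
\]
at $\t=\tfrac hk+\tfrac{iZ}{k}$, transform each of the four constituent functions to the dual point $\t':=\tfrac{[-h]_k}{k}+\tfrac{i}{kZ}$, and then reassemble. Throughout I abbreviate $M:=\mat{[-h]_k&-\frac{h[-h]_k+1}{k}\\k&-h}$ and $w:=\tfrac{z}{iZ}$ (here $w$ is a local shorthand, not the auxiliary variable occurring in $\wh\psi$).

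First I would transform $\eta(\t)$, $\calC^*(z;\t)$ and $\vth(2z;\t)$ via Lemma~\ref{L:etathetaCstartrans}, applying the $\vartheta$-law with $z\mapsto 2z$ so that it contributes $e^{-4\pi kz^2/Z}$ and $\vth(2w;\t')$, while $\calC^*$ contributes the factor $\tfrac{\sin(\pi z)}{\sin(\pi w)}$, the multiplier $\chi(M)$ and the exponential $e^{\pi kz^2/Z}$. For $\psi(2z;\t)$ I would use Corollary~\ref{C:generalpsitranshknotation} with $z\mapsto 2z$; since $z\in\R$ we have $\tfrac{\re(2z\ol{Z})}{\re(Z)}=2z$, whence
\[
	\psi(2z;\t) = \chi(M)^{-3}(iZ)^{-\frac12}e^{-\frac{4\pi kz^2}{Z}}\Bigl(\psi(2w;\t') - i\,e^{\frac{4\pi kz^2}{Z}}\calE_{\frac{[-h]_k}{k}}(2w;\t')\Bigr),
\]
so that $\vth(2z;\t)+\psi(2z;\t)$ equals $\chi(M)^{-3}(iZ)^{-\frac12}e^{-4\pi kz^2/Z}$ times $\bigl(\vth(2w;\t')+\psi(2w;\t')-i\,e^{4\pi kz^2/Z}\calE_{\frac{[-h]_k}{k}}(2w;\t')\bigr)$.

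Next I would substitute these into $\calU_1(z;\t)$ and collect. The multipliers telescope to $\chi(M)^{1+1-3}=\chi(M)^{-1}$, the three factors $(iZ)^{-\frac12}$ collapse to one, and the exponentials combine to $e^{\pi kz^2/Z}e^{-4\pi kz^2/Z}=e^{-3\pi kz^2/Z}$ on the part carrying $\vth(2w;\t')+\psi(2w;\t')$ and to $e^{\pi kz^2/Z}e^{-4\pi kz^2/Z}e^{4\pi kz^2/Z}=e^{\pi kz^2/Z}$ on the part carrying $\calE_{\frac{[-h]_k}{k}}(2w;\t')$. Using $\pi w=-\tfrac{i\pi z}{Z}$, hence $\sin(\pi w)=-i\sinh(\tfrac{\pi z}{Z})$, together with the definition $f_\nu(z;\t)=\tfrac{\sin(\pi z)}{\sinh(\pi z/\t)}e^{\pi\nu z^2/\t}$ and $\sinh(-x)=-\sinh(x)$, one obtains
\[
	e^{-\frac{3\pi kz^2}{Z}}\frac{\sin(\pi z)}{\sin(\pi w)} = i\,\frac{\sin(\pi z)}{\sinh(\pi z/Z)}e^{-\frac{3\pi kz^2}{Z}} = -i\,f_{3k}(z;-Z),\qquad e^{\frac{\pi kz^2}{Z}}\frac{\sin(\pi z)}{\sin(\pi w)} = i\,f_k(z;Z).
\]
Recognizing $-\tfrac i2\,\tfrac{\calC^*(w;\t')}{\eta(\t')}\bigl(\vth(2w;\t')+\psi(2w;\t')\bigr)=\calU_1(w;\t')$ and keeping track of the overall scalar $-\tfrac i2$ — together with the extra $-i$ in front of the Eichler integral, which combines to $-\tfrac i2\cdot(-i)=-\tfrac12$ — the two pieces collapse precisely to
\[
	-i\,\chi(M)^{-1}(iZ)^{-\frac12}\Bigl(f_{3k}(z;-Z)\,\calU_1(w;\t') + \tfrac12\,f_k(z;Z)\,\tfrac{\calC^*(w;\t')}{\eta(\t')}\,\calE_{\frac{[-h]_k}{k}}(2w;\t')\Bigr),
\]
which is the assertion. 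The hypothesis $|kz|<\tfrac14$ is used only to guarantee $\bigl|\tfrac{\im(2w)}{\im(\t')}\bigr|=2k|z|<\tfrac12$, so that the $\psi$ appearing in $\calU_1(w;\t')$ is literally the function used in the combinatorial rewriting of Lemma~\ref{L:genfnsrewrite}; it plays no role in the computation itself.

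The only genuinely delicate point — and the step I would check most carefully — is the exponential bookkeeping: one must verify that the factor $e^{4\pi kz^2/Z}$ attached to the Eichler integral in the $\psi$-transformation exactly cancels the $e^{-4\pi kz^2/Z}$ coming from $\vartheta$, leaving $e^{\pi kz^2/Z}$ to pair with $\sin(\pi z)/\sinh(\pi z/Z)$ into $f_k(z;Z)$ (and \emph{not} $f_{3k}$), whereas the modular part retains $e^{-3\pi kz^2/Z}$ and thereby forces the argument $-Z$ inside $f_{3k}(z;-Z)$. Everything else — the telescoping of the $\eta$-multiplier powers and the elementary identities $\sin(\pi z/(iZ))=-i\sinh(\pi z/Z)$ and $\sinh(-x)=-\sinh(x)$ — is routine.
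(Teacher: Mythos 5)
Your proposal is correct and is exactly the computation the paper has in mind: it states the theorem with only the remark that it follows from Corollary \ref{C:generalpsitranshknotation} and Lemma \ref{L:etathetaCstartrans}, and you have carried out that substitution with the right bookkeeping (the multipliers telescoping to $\chi^{-1}$, the exponentials $e^{-3\pi kz^2/Z}$ versus $e^{\pi kz^2/Z}$ producing $f_{3k}(z;-Z)$ and $f_k(z;Z)$ respectively, and $\frac{\re(2z\ol Z)}{\re(Z)}=2z$ for real $z$). No gaps.
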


\subsection{Durfee unimodal sequences}

Recall that $K=\frac{k}{\gcd(k,6)}$. We further set $H:=\frac{6h}{\gcd(k,6)}$ and $\a_{H,K}:= \frac K2-\frac H6$. Before stating the relevant transformation, we state the following property of the rational numbers $\a_{H,K}$. A direct calculation gives the following.

\begin{lemma}\label{L:alphabadms}
	We have
	\begin{multline*}
		\left\{m \in \mathbb{Z}+\frac{1}{2}: |m-\alpha_{H,K}|\leq \frac{1}{\gcd(k,6)}\right\}  \\
		= \begin{cases}
			\{\alpha_{H,K}, \alpha_{H,K}\pm 1\} & \text{if $\gcd(k,12)=1$,} \\ \left\{\alpha_{H,K} \pm \frac{1}{2}\right\} & \text{if $\gcd(k,12)=2$,}\\
			\{\alpha_{H,K}\}  & \text{if $\gcd(k,12) =4$,} \\
			\left\{\alpha_{H,K}+\frac{1}{\gcd(k,6)}\right\}  & \text{if $\gcd(k,12) \in \{3,6\}$ and $h \equiv 1\pmod{3}$,}\vspace{1mm} \\
			\left\{\alpha_{H,K}-\frac{1}{\gcd(k,6)}\right\}  & \text{if $\gcd(k,12) \in \{3,6\}$ and $h \equiv 2\pmod{3}$,}  \\
			0 & \text{if $\gcd(k,12)= 12.$}
		\end{cases}
	\end{multline*}
\end{lemma}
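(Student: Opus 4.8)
\emph{Proof proposal.} The plan is to clear denominators and reduce the statement to a finite congruence count. Write $g := \gcd(k,6)$, so that $K = k/g$, $H = 6h/g$, and hence
\[
	\alpha_{H,K} = \frac K2 - \frac H6 = \frac{k-2h}{2g}.
\]
For $m = \ell + \tfrac12 \in \Z + \tfrac12$ one has $2gm = 2g\ell + g \in \Z$, so $|m - \alpha_{H,K}| \le \tfrac1g$ holds exactly when the integer $n := 2gm - (k-2h)$ satisfies $|n| \le 2$; and as $m$ runs over $\Z + \tfrac12$, the integer $n$ runs over the residue class $g + 2h - k \pmod{2g}$. Since $g - k$ is always even (if $k$ is odd then so is $g = \gcd(k,6)$, and if $k$ is even then so is $g$), the integer $n$ is automatically even, so $|n| \le 2$ forces $n \in \{-2, 0, 2\}$, and then $m = \alpha_{H,K} + \tfrac{n}{2g}$ equals $\alpha_{H,K}$ or $\alpha_{H,K} \pm \tfrac1g$. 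Hence the set in the lemma is exactly
\[
	\left\{ \alpha_{H,K} + \tfrac{n}{2g} : n \in \{-2,0,2\},\ n \equiv g + 2h - k \pmod{2g} \right\},
\]
and it remains only to decide, case by case, which of $-2, 0, 2$ lie in that residue class.

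Here the value of $\gcd(k,12)$ is precisely what is needed: it determines $g$ together with $k \bmod 2g$. For instance $\gcd(k,12) = 1$ gives $g = 1$; $\gcd(k,12) = 2$ gives $g = 2$ and $k \equiv 2 \pmod 4$; $\gcd(k,12) = 4$ gives $g = 2$ and $4 \mid k$; $\gcd(k,12) = 3$ gives $g = 3$ and $k \equiv 3 \pmod 6$; and $\gcd(k,12) \in \{6, 12\}$ gives $g = 6$ with $k \equiv 6$, resp.\ $0$, $\pmod{12}$. The coprimality $\gcd(h,k) = 1$ then supplies the needed facts about $h$: $h$ is odd whenever $2 \mid k$, and $3 \nmid h$ whenever $3 \mid k$, so that $h \equiv 1$ or $2 \pmod 3$ in the cases $\gcd(k,12) \in \{3, 6\}$. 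Substituting these into $g + 2h - k \pmod{2g}$ one reads off: for $g = 1$, all of $\{-2,0,2\}$ (the class being $0 \bmod 2$), giving $\{\alpha_{H,K}, \alpha_{H,K} \pm 1\}$; for $\gcd(k,12) = 2$, the pair $\{-2, 2\}$, giving $\{\alpha_{H,K} \pm \tfrac12\}$; for $\gcd(k,12) = 4$, only $0$, giving $\{\alpha_{H,K}\}$; for $\gcd(k,12) \in \{3,6\}$, the single value $2$ if $h \equiv 1 \pmod 3$ and $-2$ if $h \equiv 2 \pmod 3$, giving $\{\alpha_{H,K} \pm \tfrac1{\gcd(k,6)}\}$; and for $\gcd(k,12) = 12$, none of $-2,0,2$, so the set is empty.

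There is no real obstacle: the argument is a finite verification. The only point requiring care is the bookkeeping in the last step --- correctly combining the constraint on $h$ forced by $\gcd(h,k) = 1$ (that $h$ is odd, resp.\ $h \not\equiv 0 \pmod 3$) with the residue of $k$ modulo $2\gcd(k,6)$ in each of the six cases, and then translating the surviving values of $n$ back through $m = \alpha_{H,K} + \tfrac{n}{2g}$. This is exactly the ``direct calculation'' alluded to in the statement of the lemma.
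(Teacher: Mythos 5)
Your proof is correct, and it is a careful writeup of exactly the ``direct calculation'' the paper alludes to (the paper itself supplies no proof of this lemma). The reduction to the integer $n=2gm-(k-2h)$ lying in the residue class $g+2h-k\pmod{2g}$ with $|n|\le 2$, the observation that $g-k$ is always even so that $n\in\{-2,0,2\}$, and the case check using $\gcd(k,12)$ together with the parity/divisibility constraints on $h$ coming from $\gcd(h,k)=1$ all check out against each branch of the statement (with the ``$0$'' in the last case read as the empty set).
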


We analyze the terms with $|m-\a_{H,K}|\le\frac{1}{\gcd(k,6)}$ in $\psi$ and $\calE$ differently, and to this end we define
\begin{align*}
	\Psi(z;\t) &:= i{{\sum}^*_{m\in\Z+\frac12}} \sgn\left(m+\frac{z_2}{\t_2}\right)(-1)^{m-\frac12}q^\frac{m^2}{2}\z^m,\\[-12pt]
	% \vskip-1.6em
	\E_\frac ac(z;\t) &:= e^{-\frac{\pi iaz_2^2}{c\t_2^2}}\int_\frac ac^{\t+i\infty+\e} e^{\pi i\frac{z_2^2}{\t_2^2}\zz} \frac{{{\sum}^*_{m\in\Z+\frac12}} (-1)^{m-\frac12}\left(m+\frac{z_2}{\t_2}\right) e^{\pi i\left(m^2\zz+2m\left(z+(\zz-\t)\frac{z_2}{\t_2}\right)\right)}}{\sqrt{i(\zz-\t)}} d\zz,
\end{align*}
where ${{\sum}^*_{m\in\Z+\frac12}}$ denotes the summation restricted to those $m\in\Z+\tfrac12$ with $|m-\alpha_{H,K}|>\frac{1}{\gcd(k,6)}$.

To state our transformations, it is convenient to define $\b:=3z-\frac hk+\frac12$. Note that $\b\in\R$ if and only if $z\in\R$. Let $W:=\frac{6}{\gcd(k,6)}Z$ and denote by ${{\sum}^{**}_{m\in\Z+\frac12}}$ the summation restricted to these $m\in\Z+\frac12$ with $|m-\a_{H,K}|\leq\frac{1}{\gcd(k,6)}$.

\begin{corollary}\label{C:psitransdurfeecase}
	For $z\in\R$, we have 
	\begin{align*}%\label{E:wtpsitrans}
		&\psi\left(3z-\frac hk-\frac{iZ}{k}+\frac12;6\left(\frac hk+\frac{iZ}{k}\right)\right)\\
		&\hspace{.33cm}= \chi\mat{[-H]_K&-\frac{H[-H]_K+1}{K}\\K&-H}^{-3} (iW)^{-\frac12} e^{-\frac{\pi K}{W}\left(\b-\frac{iZ}{k}\right)^2}\\
		&\hspace{.66cm}\times \left( \Psi\left(\frac{\b}{iW}-\frac{1}{6K};\frac{[-H]_K}{K}+\frac{i}{KW}\right) - ie^{\frac{\pi K\b^2}{W}} \E_\frac{[-H]_K}{K}\left(\frac{\b}{iW}-\frac{1}{6K}; \frac{[-H]_K}{K}+\frac{i}{KW}\right)\right.\\
		&\hspace{1cm}\left. + i{{\sum}^{**}_{m\in\Z+\frac12}} (-1)^{m-\frac12} e^{\pi im^2\left(\frac{[-H]_K}{K}+\frac{i}{KW}\right) + 2\pi im\left(\frac{\b}{iW}-\frac{1}{6K}\right)} \erf\left(i(3Kz+\alpha_{H,K}-m)\sqrt{\frac{\pi }{KW}}\right)\right).
	\end{align*}
\end{corollary}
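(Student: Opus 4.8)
The plan is to reduce the statement to Corollary~\ref{C:generalpsitranshknotation} after a change of parameters, and then to extract and evaluate the finitely many terms with $|m-\a_{H,K}|\le\frac1g$ (where $g:=\gcd(k,6)$) via Lemma~\ref{lem:lem24}; throughout, $H=\frac{6h}g$, $K=\frac kg$, $W=\frac6gZ$. First I would record the elementary facts $\gcd(H,K)=1$ (which follows from $\gcd(h,k)=1$ and $\gcd(\frac6g,K)=1$), $K\mid H[-H]_K+1$ (so that $-\frac{H[-H]_K+1}K\in\Z$ and the matrix below lies in $\SL_2(\Z)$), and the identities
\[
	6\Bigl(\tfrac hk+\tfrac{iZ}k\Bigr)=\tfrac HK+\tfrac{iW}K,\qquad \tfrac{iZ}k=\tfrac{iW}{6K},\qquad 3z-\tfrac hk-\tfrac{iZ}k+\tfrac12=\b-\tfrac{iW}{6K},\qquad 3Kz+\a_{H,K}=K\b.
\]
Thus the left-hand side equals $\psi\bigl(\b-\frac{iW}{6K};\frac HK+\frac{iW}K\bigr)$, and Corollary~\ref{C:generalpsitranshknotation} applies verbatim with $(h,k,Z)$ replaced by $(H,K,W)$ (its hypotheses $\gcd(H,K)=1$, $K>0$, $\re(W)>0$ all hold) and with first argument $\b-\frac{iW}{6K}$. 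Writing $M:=\mat{[-H]_K&-\frac{H[-H]_K+1}K\\K&-H}$ and $\t:=\frac HK+\frac{iW}K$, we have $c\t+d=iW$; and since $z\in\R$ forces $\re\bigl((\b-\frac{iW}{6K})\ol W\bigr)=\b\re(W)$, the prefactor produced is $\chi(M)^{-3}(iW)^{-\frac12}e^{-\frac{\pi K}W(\b-\frac{iZ}k)^2}$ while the Eichler-integral exponential collapses to $e^{\frac{\pi K\b^2}W}$. At this stage the identity reads as claimed except that $\Psi$ and $\E_\frac{[-H]_K}K$ are replaced by the full $\psi$ and $\calE_\frac{[-H]_K}K$, and the explicit $\erf$-sum is absent.

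Second, at the transformed point $\bigl(\frac\b{iW}-\frac1{6K},\ \frac{[-H]_K}K+\frac i{KW}\bigr)$ one computes $z_2/\t_2=-K\b$. I would then split both $\psi$ and $\calE_\frac{[-H]_K}K$ at this point into the part summing over $m\in\Z+\frac12$ with $|m-\a_{H,K}|>\frac1g$, which is exactly $\Psi$ resp.\ $\E_\frac{[-H]_K}K$, and the part over $|m-\a_{H,K}|\le\frac1g$, which by Lemma~\ref{L:alphabadms} is a sum of at most three terms (so no convergence subtlety arises). The latter part of $\psi$ equals $i\,{{\sum}^{**}_{m\in\Z+\frac12}}\sgn(m-K\b)(-1)^{m-\frac12}e^{\pi im^2(\frac{[-H]_K}K+\frac i{KW})+2\pi im(\frac\b{iW}-\frac1{6K})}$. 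Hence it remains to prove, for each such $m$ (we may assume $m\ne K\b$, since otherwise all terms in sight vanish), that
\[
	(\text{$m$-term of }\psi)-i\,e^{\frac{\pi K\b^2}W}(\text{$m$-term of }\calE_\frac{[-H]_K}K)=i(-1)^{m-\frac12}e^{\pi im^2(\frac{[-H]_K}K+\frac i{KW})+2\pi im(\frac\b{iW}-\frac1{6K})}\erf\!\Bigl(i(K\b-m)\sqrt{\tfrac\pi{KW}}\Bigr).
\]

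Third, I would evaluate the $m$-th summand of $\calE_\frac{[-H]_K}K\bigl(\frac\b{iW}-\frac1{6K};\frac{[-H]_K}K+\frac i{KW}\bigr)$: substituting $z_2/\t_2=-K\b$ and collecting the $\zz$-dependence in the integrand into $e^{\pi i(m-K\b)^2\zz}$, the change of variables $\zz=\frac{[-H]_K}K+\frac i{KW}+iu$ (so $\sqrt{i(\zz-\t)}=\sqrt{-u}$, $d\zz=i\,du$, lower endpoint $-\frac1{KW}$, path running to $\infty-i\e$) turns the integral into $i\,e^{\pi i(m-K\b)^2(\frac{[-H]_K}K+\frac i{KW})}\int_{-1/(KW)}^{\infty-i\e}\frac{e^{-\pi(m-K\b)^2u}}{\sqrt{-u}}\,du$. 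Lemma~\ref{lem:lem24} with $a=m-K\b\in\R\setminus\{0\}$ and $V=\frac1{KW}$ (here $\re(V)>0$ and $-V$ lies in the left half-plane, as required) evaluates this as $\frac1{m-K\b}\bigl(\sgn(m-K\b)+\erf(i(m-K\b)\sqrt{\pi/(KW)})\bigr)e^{\pi i(m-K\b)^2(\frac{[-H]_K}K+\frac i{KW})}$. A direct computation then shows that $e^{\pi K\b^2/W}$ times the full exponential prefactor of the resulting $m$-term equals $e^{\pi im^2(\frac{[-H]_K}K+\frac i{KW})+2\pi im(\frac\b{iW}-\frac1{6K})}$ — the two phases proportional to $[-H]_K\b^2$ cancel each other, and the remaining phase $-\pi K\b^2/W$ is absorbed by $e^{\pi K\b^2/W}$. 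Consequently, in $\psi-i\,e^{\pi K\b^2/W}\calE_\frac{[-H]_K}K$ the term $\sgn(m-K\b)$ coming from the $m$-summand of $\psi$ cancels exactly the $\sgn(m-K\b)$ produced by Lemma~\ref{lem:lem24}, and only the $\erf$-term survives; using that $\erf$ is odd and that $K\b=3Kz+\a_{H,K}$, this is precisely the asserted ${{\sum}^{**}_{m\in\Z+\frac12}}$-contribution.

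The main obstacle I anticipate is the (largely computational) bookkeeping of the exponential and phase factors — in particular, verifying that adding $\pi K\b^2/W$ to the exponent of the $\calE$-summand reproduces exactly the exponent of the target $\erf$-summand, which is what forces the two $\sgn$-contributions to cancel — together with checking that the orientation and branch-of-square-root conventions of Lemma~\ref{lem:lem24} are compatible with those built into $\calE$ after the substitution $\zz=\frac{[-H]_K}K+\frac i{KW}+iu$.
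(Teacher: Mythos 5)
Your proposal is correct and follows essentially the same route as the paper: apply Corollary \ref{C:generalpsitranshknotation} with $(h,k,Z)$ replaced by $(H,K,W)$, split off the finitely many terms with $|m-\a_{H,K}|\le\frac{1}{\gcd(k,6)}$ from both $\psi$ and $\calE$, and evaluate those via the substitution $\zz\mapsto\frac{[-H]_K}{K}+\frac{i}{KW}+iu$ and Lemma \ref{lem:lem24}, with the two $\sgn(m-K\b)$ contributions cancelling. The phase bookkeeping you flag as the main risk indeed works out exactly as you describe (the $[-H]_K K\b^2$ phases cancel and $e^{\pi K\b^2/W}$ absorbs the remaining factor), so nothing is missing.
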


\begin{proof}
	Corollary \ref{C:generalpsitranshknotation} implies that
	\begin{multline*}%\label{E:wtpsitrans}
		\psi\left(3z-\frac hk-\frac{iZ}{k}+\frac12;6\left(\frac hk+\frac{iZ}{k}\right)\right) = \chi\mat{[-H]_K&-\frac{H[-H]_K+1}{K}\\K&-H}^{-3}(iW)^{-\frac12}e^{-\frac{\pi K}{W}\left(\b-\frac{iZ}{k}\right)^2}\\
		\times \left(\psi\left(\frac{\b}{iW}-\frac{1}{6K};\frac{[-H]_K}{K}+\frac{i}{KW}\right) - ie^\frac{\pi K \b^2}{W}\calE_\frac{[-H]_K}{K}\left(\frac{\b}{iW}-\frac{1}{6K}; \frac{[-H]_K}{K}+\frac{i}{KW}\right)\right).
	\end{multline*}
	 Letting $\zz\mapsto i\zz+\frac{[-H]_K}{K}+\frac{i}{KW}$ in the integral in $\calE$ and simplifying exponents, the terms with $|m-\a_{H,K}|\le\frac{1}{\gcd(k,6)}$ (without the prefactors) become 
	\begin{multline*}
		(-1)^{m+\frac12}e^{\pi im^2\left(\frac{[-H]_K}{K}+\frac{i}{KW}\right)+2\pi im\left(\frac{\b}{iW}-\frac{1}{6K}\right)}\\
		\left(i\sgn(3Kz+\a_{H,K}-m)+(3Kz+\a_{H,K}-m)\int_{-\frac{1}{KW}}^{\infty-i\e} \frac{e^{-\pi(3Kz+\a_{H,K}-m)^2\zz}}{\sqrt{-\zz}} d\zz\right).
	\end{multline*}
	Using Lemma \ref{lem:lem24} we obtain the claim. \qedhere
\end{proof}

We state the transformation of $\calV_1$ we require the following multiplier
\begin{equation*}%\label{muH}
	\chi_{h,k} := \frac{\chi\mat{[-h]_k&-\frac{h[-h]_k+1}{k}\\k&-h}^2}{\chi \mat{[-H]_K&-\frac{H[-H]_K+1}{K}\\K&-H}^3}.
\end{equation*}
The following lemma follows from Lemma \ref{L:etathetaCstartrans} and Corollary \ref{C:psitransdurfeecase}.

\begin{lemma}\label{lem:calv}
	With notation as above and $z\in\R$ sufficiently small, we have
	\begin{align*}
		&\calV_1\left(z;\frac hk+\frac{iZ}{k}\right) = i\chi_{h,k}(iW)^{-\frac12}\frac{\calC^*\left(\frac{z}{iZ};\frac{[-h]_k}{k}+ \frac{i}{kZ}\right)}{\eta\left(\frac{[-h]_k}{k}+\frac{i}{kZ}\right)}\\
		&\hspace{1cm}\times \left({\vphantom{e^{\left(\frac{\a_{H,K}^2}{36K}\right)}}}e^{-\frac{\pi ih}{6k}-\frac{6\pi\a_{H,K}z}{W}-\frac{\pi\a_{H,K}^2}{KW}+\frac{\pi i}{6}} f_\frac k2(z;-Z) \Psi\left(\frac{\b}{iW}-\frac{1}{6K};\frac{[-H]_K}{K}+\frac{i}{KW}\right)\right.\\
		&\hspace{1.75cm}+ie^{-\frac{\pi ih}{6k}+\frac{\pi i}{6}} f_k(z;Z)\mathbb{E}_\frac{[-H]_K}{K}\left(\frac{\b}{iW}-\frac{1}{6K};\frac{[-H]_K}{K}+\frac{i}{KW}\right)\\
		&\hspace{2.5cm}\left.+i{{\sum}^{**}_{m\in\Z+\frac12}}(-1)^{m-\frac12}f_{\frac{k}{2}}(z;-Z) \erf\left(i(3Kz+\alpha_{H,K}-m)\sqrt{\frac{\pi }{KW}}\right) \right. \\ & \hspace{4.5cm}  \left. {\vphantom{e^{\left(\frac{\a_{H,K}^2}{36K}\right)}}} \times e^{\frac{\pi z}{Z}(m-\alpha_{H,K})-\frac{\pi}{KW}(m-\alpha_{H,K})^2+\pi i\left(\frac{[-H]_Km^2}{K}+\frac{H}{36K}-\frac{1}{3K}(m-\alpha_{H,K})\right)}\right).
	\end{align*}
\end{lemma}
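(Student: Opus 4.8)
The proof is a direct computation combining the modular transformation of $\calC^*/\eta$ from Lemma \ref{L:etathetaCstartrans} with the transformation of the shifted false theta function from Corollary \ref{C:psitransdurfeecase}. Starting from $\calV_1(z;\t)=-q^{1/12}\z^{-1/2}\frac{\calC^*(z;\t)}{\eta(\t)}\psi(3z-\t+\frac12;6\t)$, I substitute $\t=\frac hk+\frac{iZ}{k}$ and transform the three factors separately. Applying the first and third parts of Lemma \ref{L:etathetaCstartrans} to $\calC^*$ and $\eta$, the $(iZ)^{-1/2}$-factors from $\calC^*$ and $\eta^{-1}$ cancel and one is left with $\chi\mat{[-h]_k&-\frac{h[-h]_k+1}{k}\\k&-h}^{2}\frac{\sin(\pi z)}{\sin(\pi z/(iZ))}e^{\pi kz^2/Z}\frac{\calC^*(z/(iZ);\ldots)}{\eta(\ldots)}$. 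For the false theta factor, note that $3z-\t+\frac12=\b-\frac{iZ}{k}$ with $\b=3z-\frac hk+\frac12$ and that $6\t=\frac HK+\frac{iW}{K}$, so Corollary \ref{C:psitransdurfeecase} applies directly and produces the multiplier $\chi\mat{[-H]_K&-\frac{H[-H]_K+1}{K}\\K&-H}^{-3}$, the square-root factor $(iW)^{-1/2}$, the global prefactor $e^{-\frac{\pi K}{W}(\b-iZ/k)^2}$, and the three pieces built from $\Psi$, $\E$, and the finite sum $\sum^{**}$ over $m$ with $|m-\a_{H,K}|\le\frac1{\gcd(k,6)}$ (enumerated by Lemma \ref{L:alphabadms}).

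It remains to assemble the prefactors and simplify the exponentials. The three multipliers multiply to $\chi_{h,k}$ by definition, $(iW)^{-1/2}$ is the only surviving square-root factor, and $\frac{\calC^*(z/(iZ);\ldots)}{\eta(\ldots)}$ appears as in the statement. For the exponentials I would use repeatedly $\frac KW=\frac k{6Z}$, $K\gcd(k,6)=k$, $W=\frac6{\gcd(k,6)}Z$, $H\gcd(k,6)=6h$, $\a_{H,K}=K(\frac12-\frac hk)$, and $\frac H{36K}=\frac h{6k}$. Writing $q^{1/12}\z^{-1/2}=e^{\frac{\pi ih}{6k}-\frac{\pi Z}{6k}-\pi iz}$ and expanding $e^{-\frac{\pi K}{W}(\b-iZ/k)^2}$, the $\pm\frac{\pi Z}{6k}$ cancel and the $z$-linear parts regroup via $\frac{\pi i\b}3=\pi iz-\frac{\pi ih}{3k}+\frac{\pi i}6$ into the constant $e^{-\frac{\pi ih}{6k}+\frac{\pi i}6}$. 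The $z^2$-content behaves differently across the three pieces: for the $\E$-piece it is cancelled by the factor $e^{\pi K\b^2/W}$ in Corollary \ref{C:psitransdurfeecase}, leaving $e^{\pi kz^2/Z}$, and since $\frac{\sin(\pi z)}{\sin(\pi z/(iZ))}e^{\pi kz^2/Z}=if_k(z;Z)$ this yields the $\E$-line; for the $\Psi$- and $\sum^{**}$-pieces there is no such factor, so the $z^2$-terms combine to $e^{-\pi kz^2/(2Z)}$, and $\frac{\sin(\pi z)}{\sin(\pi z/(iZ))}e^{-\pi kz^2/(2Z)}=-if_{k/2}(z;-Z)$. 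Finally the residual $z$-linear, $m$-linear, $m^2$- and constant exponentials — including the factors $e^{\pi im^2(\frac{[-H]_K}{K}+\frac i{KW})+2\pi im(\frac\b{iW}-\frac1{6K})}$ carried along in the $\sum^{**}$ sum — are rewritten with the identities above to match the exponents on the three lines of the claim.

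I expect the main obstacle to be purely organizational: there is no new analytic input — everything reduces to Lemma \ref{L:etathetaCstartrans}, Corollary \ref{C:psitransdurfeecase} and algebra — but one must track roughly a dozen exponential factors simultaneously and spot the right groupings, in particular that the mixed $z^2$-contributions collapse to $\pm\frac{\pi kz^2}{2Z}$ (hence to $f_{k/2}(\cdot;-Z)$) for the $\Psi$- and $\sum^{**}$-pieces while cancelling entirely for the $\E$-piece, and that the $m$-dependent exponentials in the $\sum^{**}$ sum rearrange precisely into the displayed form. The hypothesis that $z\in\R$ be sufficiently small is used to ensure $\big|\Im(\text{argument})/\Im(\text{modulus})\big|<\frac12$ at every occurrence of $\psi$ and $\Psi$, so that the $\sgn$-normalization agrees with the one in these definitions, and to keep the $\calC^*$- and $f_\nu$-factors away from their poles and zeros; with these in place the computation is routine.
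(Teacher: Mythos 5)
Your proposal is correct and follows exactly the route the paper intends: the paper gives no written proof beyond the remark that the lemma ``follows from Lemma \ref{L:etathetaCstartrans} and Corollary \ref{C:psitransdurfeecase}'', and your computation — cancelling the $(iZ)^{-1/2}$ factors from $\calC^*$ and $\eta$, combining the multipliers into $\chi_{h,k}$, expanding $e^{-\frac{\pi K}{W}(\b-iZ/k)^2}$ against $q^{\frac{1}{12}}\z^{-\frac12}$ and $e^{\frac{\pi kz^2}{Z}}$, and recognizing $if_k(z;Z)$ for the $\E$-piece versus $-if_{\frac k2}(z;-Z)$ for the $\Psi$- and $\sum^{**}$-pieces — is precisely that verification, with the key identities ($\frac KW=\frac{k}{6Z}$, $\a_{H,K}=K(\frac12-\frac hk)$, etc.) correctly identified.
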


\section{Mordell-type integrals}\label{sec:mordellint}

\subsection{Unrestricted unimodal sequences}

Similarly as in the proof of Lemma 3.2 of \cite{CN}, we obtain the following representations of $\calE_\frac{[-h]_k}{k}$.

\begin{lemma}\label{L:EichlerRewrite}
 For $z\in\R$ sufficiently small and $\re(Z)>0$, we have
	\[
	\calE_{\frac{[-h]_k}{k}}\left(\frac{2z}{iZ};\frac{[-h]_k}{k}+\frac{i}{kZ}\right) = \frac{1}{\pi i} \sum_{m\in\Z+\frac12} (-1)^{m-\frac12} e^{\pi im^2\frac{[-h]_k}{k}} \lim_{\e\to0^+} \int_{-\infty}^\infty \frac{e^{-\frac{\pi x^2}{kZ}}}{x-(m-2kz)(1+i\e)} dx.
	\]
\end{lemma}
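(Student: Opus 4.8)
The plan is to start from the definition of $\calE_{\frac{[-h]_k}{k}}$ evaluated at the relevant arguments, massage the integral defining it into a form where the dependence on the summation index $m$ factors out, and then identify the remaining $\zz$-integral with a standard Mordell-type integral that can be written as a limit of a real-line integral with a shifted pole. Concretely, first I would substitute $z \mapsto \frac{2z}{iZ}$, $\t \mapsto \frac{[-h]_k}{k}+\frac{i}{kZ}$ into the definition of $\calE_\frac ac(z;\t)$ from Section~\ref{sec:intro}. Here $z\in\R$, so $z_2 = \im\left(\frac{2z}{iZ}\right) = \re\left(\frac{2z}{Z}\right)$ and $\t_2 = \im\left(\frac{[-h]_k}{k}+\frac{i}{kZ}\right) = \re\left(\frac{1}{kZ}\right)$; for $z$ small the ratio $\frac{z_2}{\t_2}$ is controlled and equals $2kz$ up to the usual real/imaginary bookkeeping (using $\re(1/Z)$ in numerator and denominator cancels the $Z$-dependence), which is exactly the shift appearing as $m - 2kz$ in the statement. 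The next step is to make the change of variables $\zz \mapsto i\zz + \frac{[-h]_k}{k}+\frac{i}{kZ}$ (as done in the proof of Corollary~\ref{C:psitransdurfeecase}) to straighten the contour of integration so that it runs parallel to the imaginary axis, and to collect the quadratic and linear exponential factors in $m$; this produces the prefactor $(-1)^{m-\frac12}e^{\pi i m^2 \frac{[-h]_k}{k}}$ outside the integral.

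After the change of variables, the surviving integral should be of the Gaussian-times-rational-function type
\[
	\int \frac{e^{-\frac{\pi x^2}{kZ}}}{x - (m-2kz)(1+i\e)}\, dx,
\]
and the work is to justify the passage from the contour integral $\int_{\cdot}^{\cdot+i\infty+\e}$ to the real-line principal-value-type integral with a pole pushed off the real axis. The mechanism is the one behind Lemma~\ref{lem:lem24}: writing $\frac{1}{\sqrt{-\zz}}$ (or the Gaussian) via its integral representation and swapping the order of integration converts the error-function structure into a Cauchy-kernel structure. Alternatively, and perhaps more cleanly, I would invoke Lemma~\ref{lem:lem24} itself: it expresses $\int_{-V}^{\infty - i\e}\frac{e^{-\pi a^2\zz}}{\sqrt{-\zz}}d\zz$ in terms of $\sgn(a) + \erf(ia\sqrt{\pi V})$, so reading that identity backwards lets me replace each $\erf$ term (which is what $\calE$ is built from after the completion machinery) with a $\zz$-integral, and then a further Fourier-type manipulation (Gaussian integral over $x$) turns that into the $\frac{e^{-\pi x^2/(kZ)}}{x-(m-2kz)(1+i\e)}$ kernel. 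The $(1+i\e)$ factor and the $\lim_{\e\to0^+}$ encode which side of the pole the contour passes, matching the $\e$ in the upper limit $\infty - i\e$ of Lemma~\ref{lem:lem24}.

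The main obstacle I anticipate is bookkeeping the branch of the square root and the exact location of the pole relative to the contour: one must check that the sign conventions in $\sqrt{i(\zz-\t)}$, the principal branch convention stated in Section~\ref{sec:intro}, and the direction of the $\e$-shift all line up so that the final real integral has its pole displaced to the correct side (hence the $(1+i\e)$ rather than $(1-i\e)$), and that interchanging $\sum_{m}$ with the integral and with the $\e\to0^+$ limit is legitimate — this needs the Gaussian decay $e^{-\frac{\pi x^2}{kZ}}$ with $\re\left(\frac{1}{kZ}\right)>0$ (which holds since $\re(Z)>0$) to dominate the $m$-sum uniformly in $\e$, plus the restriction that $z$ is small so that $m - 2kz$ stays away from $0$ for all half-integers $m$, keeping the poles off the real axis in the limit. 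Apart from that, the computation is a direct parallel to the proof of Lemma~3.2 of \cite{CN}, so I would simply indicate the substitutions and cite that argument for the details.
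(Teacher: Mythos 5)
Your proposal follows essentially the same route as the paper, which gives no details and simply adapts the proof of Lemma~3.2 of \cite{CN}: substitute, straighten the contour via $\zz\mapsto i\zz+\tau$, complete the square so that the $m$-th term carries the factor $(-1)^{m-\frac12}e^{\pi i m^2\frac{[-h]_k}{k}}$ times $i(m-2kz)e^{-\frac{\pi(m-2kz)^2}{kZ}}\int_{-\frac{1}{kZ}}^{\infty-i\e}\frac{e^{-\pi(m-2kz)^2\zz}}{\sqrt{-\zz}}\,d\zz$, evaluate that integral by Lemma~\ref{lem:lem24}, and identify $\sgn(a)+\erf\bigl(ia\sqrt{\pi V}\bigr)$ with $\frac{1}{\pi i}e^{\pi V a^2}\lim_{\e\to0^+}\int_{\R}\frac{e^{-\pi Vx^2}}{x-a(1+i\e)}\,dx$ (the Sokhotski--Plemelj/Mordell identity behind \cite[(3.3)--(3.4)]{CN}). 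Two small corrections: the ratio is $\frac{z_2}{\t_2}=-2kz$ (so that $m+\frac{z_2}{\t_2}=m-2kz$), and the $m$-sum on the right-hand side is only conditionally convergent --- the Cauchy-kernel integrals decay merely like $\frac{1}{|m|}$ --- so the rearrangement rests on the oscillation of $(-1)^{m-\frac12}e^{\pi i m^2\frac{[-h]_k}{k}}$ rather than on uniform Gaussian domination of the sum, exactly as in \cite{CN}.
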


For $0\le D\le\frac{1}{12}$, we write
\begin{equation*}%\label{E:Estaredecomposition}
e^{\frac{2\pi D}{kZ}} \calE_{\frac{[-h]_k}{k}}^*\left(\frac{2z}{iZ};\frac{[-h]_k}{k}+\frac{i}{kZ}\right) = \calE_{\frac{[-h]_k}{k},D}^*\left(\frac{2z}{iZ};\frac{[-h]_k}{k}+\frac{i}{kZ}\right) + \calE_{\frac{[-h]_k}{k},D}^e\left(\frac{2z}{iZ};\frac{[-h]_k}{k}+\frac{i}{kZ}\right),
\end{equation*}
where
\begin{align*} 
	\calE_{\tfrac{[-h]_k}{k},D}^*\left(\tfrac{2z}{iZ};\tfrac{[-h]_k}{k}+\tfrac{i}{kZ}\right) &:= \frac{e^\frac{2\pi D}{kZ}}{\pi i} \sum_{m\in\Z+\frac12} (-1)^{m-\frac12}e^{\pi im^2\frac{[-h]_k}{k}} \lim_{\e\to0^+} \int_{-\sqrt{2D}}^{\sqrt{2D}} \frac{e^{-\frac{\pi x^2}{kZ}}}{x-(m-2kz)(1+i\e)} dx,\\
	\calE_{\tfrac{[-h]_k}{k},D}^e\left(\tfrac{2z}{iZ};\tfrac{[-h]_k}{k}+\tfrac{i}{kZ}\right) &:= \frac{e^\frac{2\pi D}{kZ}}{\pi i}\hspace{-0.2cm}  \sum_{m\in\Z+\frac12} (-1)^{m-\frac12}e^{\pi im^2\frac{[-h]_k}{k}} \lim_{\e\to0^+} \int_{|x|\ge\sqrt{2D}} \frac{e^{\frac{\pi x^2}{kZ}}}{x-(m-2kz)(1+i\e)} dx.
\end{align*}
%\begin{align*}%\label{E:Estardef}
%	&\calE_{\frac{[-h]_k}{k},D}^*\left(\frac{2z}{iZ};\frac{[-h]_k}{k}+\frac{i}{kZ}\right)\\
%	&\hspace{3.75cm}:=  \frac{e^{\frac{2\pi D}{kZ}}}{\pi i}\sum_{m\in\Z+\frac12} (-1)^{m-\frac12} e^{\pi im^2\frac{[-h]_k}{k}} \lim_{\e\to0^+} \int_{-\sqrt{2D}}^{\sqrt{2D}} \frac{e^{-\frac{\pi x^2}{kZ}}}{x-(n-2kz)(1+i\e)} dx,\\
%	%\label{E:Eedef}
%	&\calE_{\frac{[-h]_k}{k},D}^e\left(\frac{2z}{iZ};\frac{[-h]_k}{k}+\frac{i}{kZ}\right)\\
%	&\hspace{3.75cm}:= \frac{e^{\frac{2\pi D}{kZ}}}{\pi i} \sum_{m\in\Z+\frac12} (-1)^{m-\frac12} e^{\pi im^2\frac{[-h]_k}{k}}\lim_{\e\to0^+} \int_{|x|\ge\sqrt{2D}} \frac{e^{\frac{\pi x^2}{kZ}}}{x-(m-2kz)(1+i\e)} dx.
%\end{align*}

\begin{lemma}\label{lem:calEbound}
	Assume that $0\le D\le\frac{1}{12}$ and $\re(\frac1Z)\ge\frac k2$. Then, for $\ell{\in\N_0}$
	\[
	\left[\frac{\del^\ell}{\del z^\ell} \calE_{\frac{[-h]_k}{k},D}^e\left(\frac{2z}{iZ};\frac{[-h]_k}{k}+\frac{i}{kZ}\right)\right]_{z=0} \ll \log(k)+k^{\ell}.
	\]
\end{lemma}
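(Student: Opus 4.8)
The plan is to differentiate the series termwise, resolve the $\lim_{\e\to0^+}$ with the Sokhotski--Plemelj formula, and then estimate separately a Hadamard finite--part integral and a ``residue'' contribution that turns out to be essentially a derivative of the Jacobi theta function. First, the only $z$--dependence in the series defining $\calE^e_{\frac{[-h]_k}{k},D}\big(\tfrac{2z}{iZ};\cdot\big)$ sits in $(x-(m-2kz)(1+i\e))^{-1}$, and $\big[\tfrac{\del^\ell}{\del z^\ell}(x-(m-2kz)(1+i\e))^{-1}\big]_{z=0}=(-1)^\ell\ell!\,(2k(1+i\e))^\ell(x-m(1+i\e))^{-(\ell+1)}$; using $\lim_{\e\to0^+}(1+i\e)^\ell=1$ it suffices to bound $(2k)^\ell$ times $\sum_{m\in\Z+\frac12}(-1)^{m-\frac12}e^{\pi im^2\frac{[-h]_k}{k}}L_m$, where $L_m:=e^{\frac{2\pi D}{kZ}}\lim_{\e\to0^+}\int_{|x|\ge\sqrt{2D}}e^{-\frac{\pi x^2}{kZ}}(x-m(1+i\e))^{-(\ell+1)}dx$. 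Write $\rho:=\re(\tfrac1Z)\ge\tfrac k2$ and $G(x):=e^{\frac{2\pi D}{kZ}}e^{-\frac{\pi x^2}{kZ}}$; on the range of integration we have the uniform Gaussian bound $|G(x)|=e^{-\frac{\pi\rho}{k}(x^2-2D)}\le e^{-\frac\pi2(x^2-2D)}$, so $G$ and all its derivatives decay Gaussianly there with rate $\ge\tfrac\pi2$. Each pole $x=m$ lies in $\{|x|\ge\sqrt{2D}\}$ but is uniformly bounded away from its boundary, $|\pm\sqrt{2D}-m|\ge\tfrac12-\tfrac1{\sqrt6}>0$, so $L_m$ exists and Sokhotski--Plemelj for the order-$(\ell+1)$ pole gives $L_m=\mathrm{f.p.}\!\int_{|x|\ge\sqrt{2D}}\frac{G(x)}{(x-m)^{\ell+1}}\,dx+\sgn(m)\frac{i\pi}{\ell!}G^{(\ell)}(m)$.

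For the residue piece, $G^{(\ell)}(x)=G(x)Q_\ell(x)$ where $Q_\ell$ is a polynomial in $x$ and $(kZ)^{-1}$, of degree $\le\ell$ in each and of parity $(-1)^\ell$ in $x$, while $(-1)^{m-\frac12}e^{\pi im^2\frac{[-h]_k}{k}}G(m)=(-1)^{m-\frac12}e^{\frac{2\pi D}{kZ}}e^{\pi im^2\tau'}$ with $\tau':=\frac{[-h]_k}{k}+\frac i{kZ}$, $\Im\tau'=\rho/k\ge\tfrac12$. A parity check in $m$ then shows the residue piece vanishes when $\ell$ is odd (in particular it vanishes for $\ell=0$, consistent with $\vartheta(0;\tau')=0$), while for $\ell$ even it equals $\tfrac{2i\pi}{\ell!}\sum_{m>0}(-1)^{m-\frac12}e^{\pi im^2\tau'}Q_\ell(m)$; estimating this termwise (the $m=\tfrac12$ term dominating, as one also sees from the Jacobi triple product) gives $\ll_\ell e^{-\pi\rho/(4k)}$ times powers of $|kZ|^{-1}$. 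Since $|e^{\frac{2\pi D}{kZ}}|=e^{2\pi D\rho/k}\le e^{\pi\rho/(6k)}$ for $D\le\tfrac1{12}$, the exponential $e^{-\pi\rho/(4k)}$ more than compensates; combined with the $(2k)^\ell$ prefactor and the control on $|kZ|^{-1}$ in terms of $\rho$ on the admissible range of $Z$, the residue piece is $\ll_\ell k^\ell$.

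For the finite--part piece, integrating by parts $\ell$ times produces $\tfrac1{\ell!}\mathrm{p.v.}\!\int_{|x|\ge\sqrt{2D}}\frac{G^{(\ell)}(x)}{x-m}\,dx$ plus boundary terms of shape $\frac{G^{(j)}(\pm\sqrt{2D})}{(\pm\sqrt{2D}-m)^{\ell-j}}$, $0\le j\le\ell-1$. Since $G^{(j)}(\pm\sqrt{2D})$ does not depend on $m$, summing a boundary term against $(-1)^{m-\frac12}e^{\pi im^2\frac{[-h]_k}{k}}$ leaves $\sum_{m\in\Z+\frac12}\frac{(-1)^{m-\frac12}e^{\pi im^2[-h]_k/k}}{(\pm\sqrt{2D}-m)^{r}}$ with $r=\ell-j\ge1$: a cotangent/digamma--type sum, absolutely convergent and $O(1)$ for $r\ge2$, and $O(\log k)$ for $r=1$ by decomposing the $2k$--periodic coefficient into residue classes and using square--root cancellation of quadratic Gauss sums in its partial sums. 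The $\mathrm{p.v.}$ integral is treated by splitting each $m$--integral at $|x-m|=1$: on $|x-m|<1$ one bounds by the local supremum of $(G^{(\ell)})'$, which is summable in $m$ against the Gaussian weight, and on $|x-m|\ge1$ one interchanges summation and integration, reducing again to $\sum_m\frac{(-1)^{m-\frac12}e^{\pi im^2[-h]_k/k}}{x-m}=O(\log k)$ after using the Gaussian to localize $G^{(\ell)}$ to $|x|=O(1)$. Re--inserting the $(2k)^\ell$ and the powers of $kZ$ yields $\ll\log(k)+k^\ell$. I expect the genuine obstacle to be exactly this last bookkeeping: all estimates must be uniform in $Z$ using only $\re(1/Z)\ge k/2$, which forces one to keep the sharp Gaussian rate $\tfrac\pi2$ and, above all, to extract real cancellation from the quadratic-Gauss-sum structure of the coefficients (the triangle inequality alone already diverges) --- this being precisely where the $\log k$ is produced.
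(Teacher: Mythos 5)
Your decomposition (Sokhotski--Plemelj into a finite-part integral plus a $\delta^{(\ell)}$-term, then $\ell$-fold integration by parts) is genuinely different from the paper's, which instead folds the integral onto $[0,\infty)$ via $u=x^2-2D$, splits by partial fractions, and rotates the contour to $e^{\pm\pi i/4}\R^+$. Unfortunately your route has a concrete gap that the paper's rotation is specifically designed to avoid: the boundary terms $G^{(j)}(\pm\sqrt{2D})$, $1\le j\le\ell-1$. At $x=\pm\sqrt{2D}$ the prefactor $e^{\frac{2\pi D}{kZ}}$ exactly cancels $e^{-\frac{\pi x^2}{kZ}}$, so $G^{(j)}(\pm\sqrt{2D})$ is a polynomial of degree $j$ in $(kZ)^{-1}$ with \emph{no} Gaussian suppression at all; the hypothesis $\re(\tfrac1Z)\ge\tfrac k2$ bounds $|kZ|^{-1}$ from \emph{below} (by $\tfrac12$), not from above, and you never bound these factors. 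Even granting $|kZ|^{-1}\asymp 1$, the term $j=\ell-1$, $r=1$ contributes $(2k)^\ell\cdot O(\log k)$, which already exceeds the claimed $\log(k)+k^\ell$; the same happens in your principal-value piece, where the $O(\log k)$ sum $\sum_m c_m/(x-m)$ is multiplied by the full $(2k)^\ell$ prefactor and by $\sup|G^{(\ell)}|\asymp|kZ|^{-\ell}$. Since integration by parts is an identity, these large boundary terms must cancel against part of the p.v.\ integral, and your term-by-term estimation destroys that cancellation. By contrast, in the paper's proof the rotated $u$-integrals are bounded by an absolute constant using only $\re(e^{\pm\pi i/4}/(kZ))\ge\tfrac{1}{2\sqrt2}$, so no positive powers of $(kZ)^{-1}$ ever appear outside the exponentially damped residue sum, and the single conditionally convergent sum $\sum_m c_m/m^{\ell+1}$ (the source of the $\log k$) occurs only at $\ell=0$, where it carries no $k^\ell$. (To be fair, the paper's residue terms also carry powers of $(kZ)^{-1}$ compensated only by $e^{-\pi(m^2-2D)\re(1/(kZ))}$, so the stated hypothesis is doing implicit work there too; but your boundary terms have zero compensation, which is strictly worse.)

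Two smaller points. First, your parenthetical on the residue piece is wrong: $\ell=0$ is even, not odd, and because of the $\sgn(m)$ the $m$ and $-m$ terms \emph{add}, so the $\ell=0$ residue piece is the partial theta $2\pi i\,e^{\frac{2\pi D}{kZ}}\sum_{m>0}(-1)^{m-\frac12}e^{\pi im^2\tau'}\ne\vartheta(0;\tau')$; it is exponentially small but not zero (your even-$\ell$ estimate does cover it, so this is cosmetic). Second, the $O(\log k)$ for $\sum_m c_m/(x-m)$ does not need square-root cancellation of Gauss sums: writing the $2k$-periodic coefficients via $\lim_N\sum_{|r|\le N}\frac{1}{y+r}=\pi\cot(\pi y)$ gives $\frac{\pi}{2k}\sum_{\nu\bmod 2k}c_\nu\cot\bigl(\tfrac{\pi(x-\nu)}{2k}\bigr)$, and the triangle inequality applied to \emph{this} (using $|\cot(\pi t)|\ll 1+\|t\|^{-1}$) already yields $O(\log k)$ --- exactly the mechanism behind the paper's Lemma \ref{lem:lem46}. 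Your Abel-summation route can be made to work (the complete Gauss sum vanishes by the antisymmetry $c_{-m}=-c_m$), but it is harder than necessary and does not repair the $(2k)^\ell\log k$ and $|kZ|^{-j}$ overshoots above.
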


\begin{proof}  
	We follow the proof of Lemma 3.3 in \cite{CN}. Assume that $z\in\R$ is sufficiently small. Combining the integral over the negative and positive reals gives, making the change of variables $u=x^2-2D$ 
	\begin{multline*}
		\calE_{\frac{[-h]_k}{k},D}^e\left(\frac{2z}{iZ};\frac{[-h]_k}{k}+\frac{i}{kZ}\right)\\
		= \frac{i}{\pi}\sum_{m\in\Z+\frac12} (-1)^{m-\frac12}(m-2kz)e^{\pi im^2\frac{[-h]_k}{k}}\lim_{\e\to0^+} \int_0^\infty \frac{e^{-\frac{\pi u}{kZ}}}{\sqrt{u+2D}\left(u+2D-(m-2kz)^2(1+i\e)^2\right)} du.
	\end{multline*}
	Using the identity $\frac{1}{a-b}=\frac{a}{b(a-b)}-\frac1b$, we split
	\begin{multline*}
		\frac{1}{u+2D-(m-2kz)^2(1+i\e)^2}\\
		= \frac{u+2D}{(m-2kz)^2(1+i\e)^2(u+2D-(m-2kz)^2(1+i\e)^2)} - \frac{1}{(m-2kz)^2(1+i\e)^2}
	\end{multline*}
	and consider the contribution from each term separately which we denote by $\calE_{\frac{[-h]_k}{k},D,1}^e$ and $\calE_{\frac{[-h]_k}{k},D,2}^e$, respectively. We start with $\calE_{\frac{[-h]_k}{k},D,1}^e$ and write  
	\begin{multline*}
		\calE_{\frac{[-h]_k}{k},D,1}^e\left(\frac{2z}{iZ};\frac{[-h]_k}{k}+\frac{i}{kZ}\right)\\
		= \frac i\pi\sum_{m\in\Z+\frac12} \frac{(-1)^{m-\frac12}e^{\pi im^2\frac{[-h]_k}{k}}}{m-2kz}\lim_{\e\to0^+} \int_0^\infty \frac{\sqrt{u+2D}e^{-\frac{\pi u}{kZ}}}{u+2D-(m-2kz)^2(1+i\e)^2} du.
	\end{multline*}
	The poles of the integrand lie at $u=-2D+(m-2kz)^2(1+i\e)^2$. These have positive real part if $\e$ is sufficiently small. If $\re(\frac1Z)\ge\frac k2$, then either $\re(\frac{e^\frac{\pi i}{4}}{kZ})\ge\frac{1}{2\sqrt2}$ or $\re(\frac{e^{-\frac{\pi i}{4}}}{kZ})\ge\frac{1}{2\sqrt2}$. Using the Residue Theorem, we shift the path of integration to $e^\frac{\pi i}{4}\R^+$ if $\re(\frac{e^\frac{\pi i}{4}}{kZ})\ge\frac{1}{2\sqrt2}$ or to $e^{-\frac{\pi i}{4}}\R^+$ if $\re(\frac{e^{-\frac{\pi i}{4}}}{kZ})\ge\frac{1}{2\sqrt2}$. In the first case we pick up residues which contribute  
	\[
		-2\sum_{m\in\Z+\frac12} \sgn(m)(-1)^{m-\frac12}e^{\pi im^2\frac{[-h]_k}{k}-\frac{\pi\left(m^2-2D\right)}{kZ}} \sum_{j\ge0} \frac{\left(\frac{4\pi z}{Z}\right)^j(m-kz)^j}{j!}.
	\]
	The contribution of the $\ell$-th coefficient in $z$ is $O(k^\ell)$.
	
	We next bound the remaining part and distinguish whether $\re(\frac{e^\frac{\pi i}{4}}{kZ})\ge\frac{1}{2\sqrt2}$ or $\re(\frac{e^{-\frac{\pi i}{4}}}{kZ})\ge\frac{1}{2\sqrt2}$. We first assume that $\re(\frac{e^\frac{\pi i}{4}}{kZ})\ge\frac{1}{2\sqrt2}$ and shift the path of integration to $e^\frac{\pi i}{4}\R^+$. Now the poles are away from the path of integration, and Lebesgue's Theorem on Dominated Convergence allows us to set $\e=0$. Thus, the limit (making the change of variables $u\mapsto e^\frac{\pi i}{4}u$) equals
	\[
		\frac{ie^\frac{\pi i}{4}}{\pi}\sum_{m\in\Z+\frac 12} \frac{(-1)^{m-\frac12}e^{\pi im^2\frac{[-h]_k}{k}}}{m-2kz} \int_0^\infty \frac{\sqrt{\frac{1+i}{\sqrt{2}}u+2D} e^{-\frac{\pi(1+i)u}{\sqrt{2}kZ}}}{\frac{1+i}{\sqrt{2}}u+2D-(m-2kz)^2} du.
	\]
	We bound
	\begin{equation*}%\label{max}
		\hspace{-2mm}\left[\frac{\del^\ell}{\del z^\ell} \frac{1}{(m-2kz)\left(\frac{1+i}{\sqrt{2}}u+2D-(m-2kz)^2\right)}\right]_{z=0}  \ll_\ell \frac{k^{\ell}}{m^3}.
	\end{equation*}
	Now estimating $|e^{-\frac{\pi(1+i)u}{\sqrt{2}}}|\le e^{-\frac{\pi u}{\sqrt{2}}}$ we bound the $\ell$-th derivative of $\calE_{\varrho,D,1}^e$ at $z=0$ against
	\[
		\ll k^\ell \sum_{m\in\Z+\frac12} \frac{1}{m^3} \int_0^\infty \left|\frac{1+i}{\sqrt{2}}u+2D\right|^\frac12 e^{-\frac{\pi u}{\sqrt{2}}} du \ll k^\ell.
	\]
	
	The case $\re(\frac{e^{-\frac{\pi i}{4}}}{kZ})\ge\frac{1}{2\sqrt{2}}$ is treated in the same way.
	
	\indent\indent In $\calE_{\frac{[-h]_k}{k},D,2}^e$, we may take $\e\to0^+$, to obtain 
	\[
		\calE_{\frac{[-h]_k}{k},D,2}^e\left(\frac{2z}{iZ};\frac{[-h]_k}{k}+\frac{i}{kZ}\right) = \frac{1}{\pi i}\sum_{m\in\Z+\frac12} \frac{(-1)^{m-\frac12}e^{\pi im^2\frac{[-h]_k}{k}}}{m-2kz}\int_0^\infty \frac{e^{-\frac{\pi u}{kZ}}}{\sqrt{u+2D}} du.
	\]
	Taylor expanding in $z$, the coefficient of $z^{\ell}$ is
	\[
		\frac{(2k)^\ell}{\pi i}\sum_{m\in\Z+\frac12} \frac{(-1)^{m-\frac12}e^{\pi im^2\frac{[-h]_k}{k}}}{m^{\ell+1}} \int_0^\infty \frac{e^{-\frac{\pi u}{kZ}}}{\sqrt{u+2D}} du.
	\]
	Since $\re(\frac{1}{kZ})\ge\frac12$, the integral is $O(1)$. For $\ell\ge2$, the series converges absolutely, and the overall term is $\ll k^\ell$. For $\ell=0$, the sum is $O(\log(k))$ as in \cite{CN}.
\end{proof}

Proceeding as in (3.11) of \cite{CN}, we obtain the following representation as a Mordell-type integral.

\begin{lemma}\label{lem:calE*}  
	For $z\in\R$ and $\re(Z)>0$, we have  
	\begin{multline*}
		\calE_{\frac{[-h]_k}{k},\frac{1}{12}}^*\left(\frac{2z}{iZ};\frac{[-h]_k}{k}+\frac{i}{kZ}\right)\\
		= \frac{1}{2\sqrt{6}ki}\sum_{\nu=0}^{2k-1} (-1)^\nu e^{\pi i\left(\nu+\frac12\right)^2\frac{[-h]_k}{k}}\int_{-1}^1 \cot\left(\frac{\pi}{2k}\left(\frac{x}{\sqrt{6}}-\nu-\frac12+2kz\right)\right) e^{\frac{\pi}{6kZ}\left(1-x^2\right)} dx.
	\end{multline*}
\end{lemma}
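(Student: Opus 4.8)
The plan is to follow the derivation of equation (3.11) in \cite{CN}, adapted to the present normalization. One starts from the definition of $\calE_{\frac{[-h]_k}{k},D}^*$ specialized to $D=\frac1{12}$, so that the truncation point is $\sqrt{2D}=\frac1{\sqrt6}$. The first step is a rescaling of the integration variable: substituting $x\mapsto\frac{x}{\sqrt6}$ sends the interval $[-\frac1{\sqrt6},\frac1{\sqrt6}]$ to $[-1,1]$, replaces $e^{-\frac{\pi x^2}{kZ}}$ by $e^{-\frac{\pi x^2}{6kZ}}$, and --- this is the point of the choice $D=\frac1{12}$ --- combines with the prefactor $e^{\frac{2\pi D}{kZ}}=e^{\frac{\pi}{6kZ}}$ to produce exactly the kernel $e^{\frac{\pi}{6kZ}(1-x^2)}$ appearing in the claim. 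One is then left with
\[
\calE_{\frac{[-h]_k}{k},\frac1{12}}^*\!\left(\tfrac{2z}{iZ};\tfrac{[-h]_k}{k}+\tfrac{i}{kZ}\right)=\frac{1}{\sqrt6\,\pi i}\sum_{m\in\Z+\frac12}(-1)^{m-\frac12}e^{\pi im^2\frac{[-h]_k}{k}}\lim_{\e\to0^+}\int_{-1}^1\frac{e^{\frac{\pi}{6kZ}(1-x^2)}}{\frac{x}{\sqrt6}-(m-2kz)(1+i\e)}\,dx.
\]

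The next step is to interchange the sum over $m$ with the integral and the limit, and then regroup the summands by the residue of $m$ modulo $2k$. Writing $m=\nu+\frac12+2kj$ with $0\le\nu\le2k-1$ and $j\in\Z$, one checks that $(-1)^{m-\frac12}=(-1)^\nu$ and that $e^{\pi im^2\frac{[-h]_k}{k}}$ depends only on $\nu$: since $m\in\Z+\frac12$, the shift $m\mapsto m+2k$ changes $\pi im^2\frac{[-h]_k}{k}$ by $2\pi i$ times an integer. The inner sum over $j$ is then a shifted cotangent series, handled by the partial-fraction expansion $\sum_{j\in\Z}\frac1{w+j}=\pi\cot(\pi w)$ (symmetric summation). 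Applying this collapses the $j$-sum, and after letting $\e\to0^+$ it becomes $\frac{\pi}{2k}\cot\!\big(\frac{\pi}{2k}(\frac{x}{\sqrt6}-\nu-\frac12+2kz)\big)$; collecting the constant $\frac1{\sqrt6\,\pi i}\cdot\frac{\pi}{2k}=\frac1{2\sqrt6\,ki}$ then yields the asserted identity.

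The main obstacle --- and essentially the only nontrivial point, exactly as in \cite{CN} --- is the justification of these interchanges and of the cotangent summation, since the $m$-sum (equivalently the inner $j$-sum) is only conditionally convergent, its terms decaying like $1/|m|$. One must therefore work with symmetric partial sums, use the regularization factor $(1+i\e)$ to keep the poles $x=\sqrt6(m-2kz)(1+i\e)$ off the contour $[-1,1]$ (which, for $z$ in the range relevant to the applications, is automatic since $|m-2kz|>\frac1{\sqrt6}$ for all $m\in\Z+\frac12$), and invoke dominated convergence in $x$, using that the kernel $e^{\frac{\pi}{6kZ}(1-x^2)}$ is bounded on $[-1,1]$ since $\re(Z)>0$. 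Once these points are in place, the remainder is the elementary chain of manipulations above, requiring no input beyond the techniques already developed in \cite{CN}.
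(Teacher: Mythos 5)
Your proposal is correct and matches the paper's (largely implicit) argument: the paper proves this lemma by ``proceeding as in (3.11) of \cite{CN}'', and its fully written-out analogue for Durfee unimodal sequences (Lemma \ref{lem:lem46}) uses exactly your steps --- reindex $m=\nu+\tfrac12+2kj$, collapse the $j$-sum via the symmetric partial-fraction expansion $\pi\cot(\pi w)=\lim_{N\to\infty}\sum_{|j|\le N}\frac{1}{w+j}$, and rescale $x\mapsto\frac{x}{\sqrt6}$ so that $e^{2\pi D/(kZ)}$ with $D=\frac1{12}$ produces the kernel $e^{\frac{\pi}{6kZ}(1-x^2)}$. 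Your attention to the conditional convergence and the $(1+i\e)$ regularization is exactly the care the reference takes (with the minor caveat that keeping the poles off $[-1,1]$ requires $z$ sufficiently small, as in the surrounding statements, rather than being automatic for all real $z$).
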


\subsection{Durfee unimodal sequences}

We next rewrite $\E_\frac{[-H]_K}{K}$ as in Lemma \ref{L:EichlerRewrite}.

\begin{lemma}\label{lem:seclem}
	For $z\in\R$ with $|z|$ sufficiently small and $\re(Z)>0$, we have
	\begin{multline*}
		\E_\frac{[-H]_K}{K}\left(\frac{\b}{iW}-\frac{1}{6K};\frac{[-H]_K}{K}+\frac{i}{KW}\right)\\
		= \frac{1}{\pi i}{{\sum}^*_{m\in\Z+\frac12}} (-1)^{m-\frac12}e^{\pi i\left(-\frac{m}{3K}+\frac{[-H]_K}{K}m^2\right)}\lim_{\e\to0^+} \int_{-\infty}^\infty \frac{e^{-\frac{\pi x^2}{KW}}}{x-(m-K\b)(1+i\e)} dx.
	\end{multline*}
\end{lemma}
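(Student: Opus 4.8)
The plan is to run the same computation as in the proof of Lemma~\ref{L:EichlerRewrite} (which in turn follows Lemma~3.2 of \cite{CN}), now with the shifted arguments occurring in Corollary~\ref{C:psitransdurfeecase}. \textbf{Step 1 (plugging in).} I would substitute $z\mapsto\frac{\b}{iW}-\frac{1}{6K}$, $\t\mapsto\frac{[-H]_K}{K}+\frac{i}{KW}$, and $\frac ac=\frac{[-H]_K}{K}$ into the definition of $\E_\frac ac$. Since $\t-\frac ac=\frac{i}{KW}$, a short computation gives $\t_2=\frac{\re(W)}{K|W|^2}$ and $z_2=-\frac{\b\re(W)}{|W|^2}$, hence $m+\frac{z_2}{\t_2}=m-K\b$ for every $m$. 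Collecting the $\zz$-dependent part of each summand's exponent, the coefficient of $\pi i\zz$ equals $(m+\frac{z_2}{\t_2})^2=(m-K\b)^2$, so that
\begin{multline*}
\E_\frac{[-H]_K}{K}\!\left(\tfrac{\b}{iW}-\tfrac{1}{6K};\tfrac{[-H]_K}{K}+\tfrac{i}{KW}\right)\\
={{\sum}^*_{m\in\Z+\frac12}}(-1)^{m-\frac12}(m-K\b)\,e^{c_m}\int_\frac{[-H]_K}{K}^{\t+i\infty+\e}\frac{e^{\pi i\zz(m-K\b)^2}}{\sqrt{i(\zz-\t)}}\,d\zz,
\end{multline*}
where $c_m$ denotes the remaining $\zz$-free part of the exponent.

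\textbf{Step 2 (the inner integral).} As in the proof of Corollary~\ref{C:psitransdurfeecase}, the substitution $\zz\mapsto i\zz+\frac{[-H]_K}{K}+\frac{i}{KW}$ turns the inner integral into $i\,e^{\pi i\t(m-K\b)^2}\int_{-1/(KW)}^{\infty-i\e}\frac{e^{-\pi(m-K\b)^2\zz}}{\sqrt{-\zz}}\,d\zz$. Since $\re(\frac{1}{KW})>0$ and $m-K\b\ne0$ for the relevant indices (see Step~4), Lemma~\ref{lem:lem24} with $a=m-K\b$ and $V=\frac{1}{KW}$ evaluates this in terms of $\sgn(m-K\b)+\erf(i(m-K\b)\sqrt{\pi/(KW)})$. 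I would then invoke the identity
\[
e^{-\frac{\pi\lambda^2}{KW}}\left(\sgn(\lambda)+\erf\!\left(i\lambda\sqrt{\tfrac{\pi}{KW}}\right)\right)=\frac{1}{\pi i}\lim_{\e\to0^+}\int_{-\infty}^\infty\frac{e^{-\pi x^2/(KW)}}{x-\lambda(1+i\e)}\,dx
\]
(the Dawson/plasma-dispersion evaluation of the Mordell integral, already used in the proof of \cite[Lemma~3.2]{CN} and Lemma~\ref{L:EichlerRewrite}), which, together with $e^{\pi i\t(m-K\b)^2}=e^{\pi i\frac{[-H]_K}{K}(m-K\b)^2}e^{-\pi(m-K\b)^2/(KW)}$, gives
\begin{multline*}
(m-K\b)\int_\frac{[-H]_K}{K}^{\t+i\infty+\e}\frac{e^{\pi i\zz(m-K\b)^2}}{\sqrt{i(\zz-\t)}}\,d\zz\\
=\frac{e^{\pi i\frac{[-H]_K}{K}(m-K\b)^2}}{\pi i}\lim_{\e\to0^+}\int_{-\infty}^\infty\frac{e^{-\pi x^2/(KW)}}{x-(m-K\b)(1+i\e)}\,dx.
\end{multline*}

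\textbf{Step 3 (the phase).} It then remains to check that $c_m+\pi i\frac{[-H]_K}{K}(m-K\b)^2=\pi i\bigl(-\frac{m}{3K}+\frac{[-H]_K}{K}m^2\bigr)$. Expanding $(m-K\b)^2$ and using $\frac ac=\frac{[-H]_K}{K}$, the $\b^2$-contributions cancel; using $z=\frac{\b}{iW}-\frac{1}{6K}$ and $\t=\frac{[-H]_K}{K}+\frac{i}{KW}$, the two $\b$-linear contributions cancel as well, leaving only the $-\frac{\pi i m}{3K}$ coming from $2\pi i m z$ together with $\pi i\frac{[-H]_K}{K}m^2$; this is a routine expansion. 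Substituting back and interchanging the (conditionally convergent, alternating) sum with the limit and the integral — justified exactly as in the unrestricted case — yields the claim.

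\textbf{Step 4 (the restriction, and where the care goes).} The summation restricted to $|m-\a_{H,K}|>\frac{1}{\gcd(k,6)}$ is precisely what makes Step~2 legitimate term by term: at $z=0$ one has $K\b=\a_{H,K}$, so for these $m$ and $|z|$ sufficiently small we have $m-K\b\ne0$ and $\sgn(m-K\b)=\sgn(m-\a_{H,K})$ constant in $z$ — without this, Lemma~\ref{lem:lem24} and the $i\e$-regularized Mordell integral would not apply uniformly; on the complementary ``bad'' terms the argument genuinely fails, and those are exactly the ones peeled off separately in Corollary~\ref{C:psitransdurfeecase}. I expect the only delicate point of the whole argument to be bookkeeping: tracking the principal branch of $\sqrt{\,\cdot\,}$ through the substitution in Step~2 and tracking the sign produced by the $(1+i\e)$-regulator. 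This is the same technical care already exercised in \cite[Lemma~3.2]{CN} and Lemma~\ref{L:EichlerRewrite}, so no new idea is needed.
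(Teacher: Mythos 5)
Your proposal is correct and follows exactly the route the paper intends: the paper gives no written proof of this lemma, only the pointer ``as in Lemma \ref{L:EichlerRewrite}'' (i.e.\ the computation of \cite[Lemma 3.2]{CN}), and your Steps 1--3 are precisely that computation with the shifted arguments; in particular your phase check is right, since $\frac{z_2}{\t_2}=-K\b$ makes the $\b^2$ and $\b$-linear terms cancel against $\pi i\frac{[-H]_K}{K}(m-K\b)^2$, leaving $\pi i(-\frac{m}{3K}+\frac{[-H]_K}{K}m^2)$. Your Step 4 observation about why the starred restriction is needed (so that $m-K\b\neq0$ and Lemma \ref{lem:lem24} applies term by term) is also the correct and only delicate point.
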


Let  
\begin{multline}\label{E:wtcalEDdefs}
	\frac{e^\frac{2\pi D}{KW}}{\pi i} {{\sum}^*_{m\in\Z+\frac12}} (-1)^{m-\frac12}e^{\pi i\left(-\frac{m}{3K}+\frac{[-H]_K}{K}m^2\right)} \lim_{\e\to0^+} \int_{-\infty}^\infty \frac{e^{-\frac{\pi x^2}{KW}}}{x-(m-K\b)(1+i\e)} dx\\
	= \E_{\frac{[-H]_K}{K},D}^*\left(\frac{\b}{iW}-\frac{1}{6K};\frac{[-H]_K}{K}+\frac{i}{KW}\right) + \E_{\frac{[-H]_K}{K},D}^e\left(\frac{\b}{iW}-\frac{1}{6K};\frac{[-H]_K}{K}+\frac{i}{KW}\right),
\end{multline}
where 
\begin{align*}
	&\E_{\frac{[-H]_K}{K},D}^*\left(\frac{\b}{iW}-\frac{1}{6K};\frac{[-H]_K}{K}+\frac{i}{KW}\right)\\
	&\hspace{2cm}:= \frac{e^\frac{2\pi D}{KW}}{\pi i}{{\sum}^*_{m\in\Z+\frac12}} (-1)^{m-\frac12} e^{\pi i\left(-\frac{m}{3K}+\frac{[-H]_K}{K}m^2\right)} \lim_{\e\to0^+} \int_{-\sqrt{2D}}^{\sqrt{2D}} \frac{e^{-\frac{\pi x^2}{KW}}}{x-(m-K\b)(1+i\e)} dx,\\
	&\E_{\frac{[-H]_K}{K},D}^e\left(\frac{\b}{iW}-\frac{1}{6K};\frac{[-H]_K}{K}+\frac{i}{KW}\right)\\
	&\hspace{2cm}:= \frac{e^\frac{2\pi D}{KW}}{\pi i}{{\sum}^*_{m\in\Z+\frac12}} (-1)^{m-\frac12} e^{\pi i\left(-\frac{m}{3K}+\frac{[-H]_K}{K}m^2\right)} \lim_{\e\to0^+} \int_{|x|\ge\sqrt{2D}} \frac{e^{-\frac{\pi x^2}{KW}}}{x-(m-K\b)(1+i\e)} dx.
\end{align*}

The following bounds for the Taylor coefficients of $\E_{\frac{[-H]_K}{K},D}^e$ are proved similarly to Lemma \ref{lem:calEbound}.

\begin{lemma}\label{lem:del}
	Let $\ell\in2\N_0$.  With $0\le D\le\frac{1}{2\gcd(k,6)^2}$, we have
	\[
		\left[\frac{\del^\ell}{\del z^\ell} \E_{\frac{[-H]_K}{K},D}^e\left(\frac{\b}{iW}-\frac{1}{6K}; \frac{[-H]_K}{K}+\frac{i}{KW}\right)\right]_{z=0} \ll \log(k) + k^\ell.
	\]
\end{lemma}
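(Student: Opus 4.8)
The plan is to mirror the argument of Lemma~\ref{lem:calEbound}, which handles the unrestricted case; the present lemma differs only in that the summation is restricted to $m\in\Z+\tfrac12$ with $|m-\a_{H,K}|>\tfrac{1}{\gcd(k,6)}$ and that the parameters are now $K$, $W$, $\b$ in place of $k$, $Z$, $2kz$. First I would start from the representation in Lemma~\ref{lem:seclem}, restrict to the region $|x|\ge\sqrt{2D}$, combine the integrals over the positive and negative real axes and substitute $u=x^2-2D$ exactly as in the proof of Lemma~\ref{lem:calEbound}. This produces a sum over $m$ (now with the ${{\sum}^*}$ restriction) of integrals in $u$, with the numerator factor $m-K\b$ and a pole locus at $u=-2D+(m-K\b)^2(1+i\e)^2$.

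Next I would apply the identity $\frac{1}{a-b}=\frac{a}{b(a-b)}-\frac1b$ to split off $\E^e_{\frac{[-H]_K}{K},D,1}$ and $\E^e_{\frac{[-H]_K}{K},D,2}$. For the first piece, the hypothesis $\re(\frac1Z)\ge\frac k2$ (equivalently a lower bound on $\re(\frac{1}{KW})$ after accounting for $W=\frac{6}{\gcd(k,6)}Z$ and $K=\frac{k}{\gcd(k,6)}$) guarantees that either $\re(\frac{e^{\pi i/4}}{KW})$ or $\re(\frac{e^{-\pi i/4}}{KW})$ is bounded below, so we may rotate the contour to $e^{\pm\pi i/4}\R^+$. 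Rotating picks up residues at the poles with $|m-\a_{H,K}|>\tfrac{1}{\gcd(k,6)}$ only, and since on this range $|m-K\b|\gg 1$ uniformly in $k$, the resulting residue sum and its $\ell$-th $z$-derivative at $z=0$ are $O(k^\ell)$ by the same geometric-series bound as before; the rotated integral is bounded by $\ll k^\ell\sum_{m}\frac{1}{|m-K\b|^{2}}\int_0^\infty|\cdots|^{1/2}e^{-\pi u/\sqrt2}\,du\ll k^\ell$, using $|m-K\b|\gg 1$ to control the denominator. For $\E^e_{\frac{[-H]_K}{K},D,2}$ one sets $\e\to0^+$, Taylor expands in $z$ through $\b=3z-\frac hk+\frac12$, and is left with $\sum^*_m \frac{(-1)^{m-\frac12}e^{(\cdots)}}{(m-\a_{H,K}+\frac hk-\frac12)^{\ell+1}}\int_0^\infty\frac{e^{-\pi u/(KW)}}{\sqrt{u+2D}}\,du$; the integral is $O(1)$ because $\re(\frac{1}{KW})\gg1$, the series converges absolutely for $\ell\ge2$ giving $\ll k^\ell$, and for $\ell=0$ the tail where $|m-\a_{H,K}|>\tfrac{1}{\gcd(k,6)}$ still behaves like the harmonic sum truncated at $\asymp k$, giving $O(\log k)$.

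The main obstacle is bookkeeping rather than genuine difficulty: one has to verify that the restriction $|m-\a_{H,K}|>\tfrac{1}{\gcd(k,6)}$ indeed yields the uniform lower bound $|m-K\b|\gg 1$ needed at $z=0$ (here $K\b = 3Kz - \tfrac hk K + \tfrac K2 - \tfrac12 + \dots$, and at $z=0$ one checks $|m-K\b|\ge K|m-\a_{H,K}| \gg 1$ using $K/\gcd(k,6)\cdot\gcd(k,6)=k$ and the explicit form of $\a_{H,K}$), and that the gcd-dependent powers of $\gcd(k,6)$ hidden in $W$ and $K$ do not spoil the constant in the contour-rotation hypothesis. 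Since $\gcd(k,6)\le 6$ is bounded, all such factors are absorbed into the implied constants, and the estimates go through verbatim. Hence $\big[\tfrac{\del^\ell}{\del z^\ell}\E^e_{\frac{[-H]_K}{K},D}\big]_{z=0}\ll\log k+k^\ell$, as claimed.
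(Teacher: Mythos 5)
Your proposal matches the paper's approach exactly: the paper gives no separate argument for this lemma, stating only that it is proved similarly to Lemma \ref{lem:calEbound}, and your adaptation (the same splitting via $\frac{1}{a-b}=\frac{a}{b(a-b)}-\frac1b$, the contour rotation to $e^{\pm\pi i/4}\R^+$ justified by $\re(\frac{1}{KW})=\frac{\gcd(k,6)^2}{6k}\re(\frac1Z)\gg1$, and the key observation that the $\sum^*$ restriction keeps the denominators bounded away from zero at $z=0$) is precisely the intended one. The only blemishes are two harmless slips: since $K\b\big|_{z=0}=\a_{H,K}$, the correct statement is $|m-K\b|=|m-\a_{H,K}|>\frac{1}{\gcd(k,6)}\ge\frac16$ at $z=0$ (there is no extra factor of $K$), and the denominator in your second piece should be $(m-\a_{H,K})^{\ell+1}$ with numerator $(3K)^\ell$; neither affects the bounds.
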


We next write $\E_{\frac{[-H]_K}{K},\frac{1}{\gcd(k,6)^2}}^*$ as a Mordell-type integral. For $0\le\nu\le6K-1$ and $0\le\vp<\gcd(k,6)$, we set
\begin{align}\label{E:gammadef}
	\gamma_{k,\vp,\nu}&:=
	\begin{cases}
		1 & \text{if $\gcd(k,6)\left(\nu+\frac{1}{2}\right)\equiv \tfrac{k}{2}-\varrho-1 \pmod{6k},$} \\
		-1 & \text{if $\gcd(k,6)\left(\nu+\frac{1}{2}\right)\equiv \tfrac{k}{2}-\varrho+1 \pmod{6k}$,} \\
		0 & \text{if $\gcd(k,6)\left(\nu+\frac{1}{2}\right)\equiv \tfrac{k}{2}-\varrho \pmod{6k}$,}
	\end{cases}\\
	\label{E:deltadef}
	\delta_{k,\vp,\nu}&:=
	\begin{cases}
		1 & \text{if $\left|6kr+\gcd(k,6)\left(\nu+\tfrac{1}{2}\right)-\frac{k}{2}+\varrho\right|\leq 1$ for some $r\in \mathbb{Z}$,}\\
		0 & \text{otherwise.}
	\end{cases}
\end{align}

\begin{lemma}\label{lem:lem46}
	For $h=\mu\gcd(k,6)+\varrho$ with $0\le\varrho<\gcd(k,6)$, we have 
	\begin{multline*}
		\E^*_{\frac{[-H]_K}{K},\frac{1}{2\gcd(k,6)^2}}\left(\frac{\b}{iW}-\frac{1}{6K};\frac{[-H]_K}{K}+\frac{i}{KW}\right) = \frac{e^{-\frac{\pi i}{6K}}}{6ki}\sum_{\nu=0}^{6K-1} (-1)^{\nu+\mu}e^{\pi i\left(\frac{\mu-\nu}{3K}+\frac{[-H]_K}{K}\left(\nu-\mu+\frac12\right)^2\right)}\\
		\times \int_{-1}^1 \left(\cot\left(\tfrac{\pi}{6k}\left(x-\gcd(k,6)\left(\nu+\tfrac12\right)+\tfrac k2-\varrho+3kz\right)\right)-\frac{6k\d_{k,\vp,\nu}}{\pi(x+\g_{k,\vp,\nu}+3kz)}\right) e^{\frac{\pi}{6kZ}\left(1-x^2\right)} dx. 
	\end{multline*}
\end{lemma}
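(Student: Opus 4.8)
The plan is to start from the definition of $\E^*_{\frac{[-H]_K}{K},D}$ with $D=\frac{1}{2\gcd(k,6)^2}$, which by Lemma \ref{lem:seclem} is the sum over ${{\sum}^*_{m\in\Z+\frac12}}$ of the elementary integrals $\frac{e^{2\pi D/(KW)}}{\pi i}(-1)^{m-\frac12}e^{\pi i(-\frac m{3K}+\frac{[-H]_K}{K}m^2)}\lim_{\e\to0^+}\int_{-\sqrt{2D}}^{\sqrt{2D}}\frac{e^{-\pi x^2/(KW)}}{x-(m-K\b)(1+i\e)}\,dx$, and to reassemble it into a single cotangent integral. Since $W=\frac6{\gcd(k,6)}Z$ we have $KW=\frac{6k}{\gcd(k,6)^2}Z$ and $2D=\frac1{\gcd(k,6)^2}$, so $\sqrt{2D}=\frac1{\gcd(k,6)}$ and $e^{\frac{2\pi D}{KW}}=e^{\frac\pi{6kZ}}$. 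Writing $h=\mu\gcd(k,6)+\vp$, I would reindex $m=6Kr+\nu-\mu+\frac12$ with $0\le\nu\le6K-1$ and $r\in\Z$, a bijective parametrization of $\Z+\frac12$. Because $6K$ is even, $(-1)^{m-\frac12}=(-1)^{\nu+\mu}$, and since the $r$-dependent part of the exponent $\pi i(-\frac m{3K}+\frac{[-H]_K}{K}m^2)$ lies in $2\pi i\Z$ (the term $-\frac{6Kr}{3K}=-2r$ and the coefficient of $\pi i r$ coming from $m^2$ is divisible by $6$), the whole phase $(-1)^{m-\frac12}e^{\pi i(-\frac m{3K}+\frac{[-H]_K}{K}m^2)}$ is independent of $r$ and equals $e^{-\frac{\pi i}{6K}}(-1)^{\nu+\mu}e^{\pi i(\frac{\mu-\nu}{3K}+\frac{[-H]_K}{K}(\nu-\mu+\frac12)^2)}$; in particular the factor $e^{-\frac{\pi i}{6K}}$ pulls outside the $\nu$-sum.

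Next I would substitute $x=\frac t{\gcd(k,6)}$ with $t\in[-1,1]$. This turns $e^{\frac{2\pi D}{KW}}e^{-\pi x^2/(KW)}$ into $e^{\frac\pi{6kZ}(1-t^2)}$, and, after multiplying numerator and denominator by $\gcd(k,6)$, the denominator becomes $t-B_r(1+i\e)$ where, using $\b=3z-\frac hk+\frac12$, $K\b=3Kz+\a_{H,K}$, $\a_{H,K}=\frac K2-\frac H6$ and $K\gcd(k,6)=k$, one computes
\[
	B_r:=\gcd(k,6)(m-K\b)=6kr+\gcd(k,6)\left(\nu+\tfrac12\right)-\tfrac k2+\vp-3kz=:B_0+6kr.
\]
It then remains to carry out the sum over $r$ for each fixed $\nu$. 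The values of $m$ in a fixed residue class modulo $6K$ are spaced $6K$ apart and $6K\gcd(k,6)=6k>2$, so at most one index $r_0=r_0(\nu)$ yields an $m$ with $|m-\a_{H,K}|\le\frac1{\gcd(k,6)}$, equivalently $|6kr_0+\gcd(k,6)(\nu+\frac12)-\frac k2+\vp|\le1$; by \eqref{E:deltadef} and \eqref{E:gammadef} (cf. Lemma \ref{L:alphabadms}) such an $r_0$ exists precisely when $\d_{k,\vp,\nu}=1$, and in that case $t-B_{r_0}=t+3kz+\g_{k,\vp,\nu}$ with $\g_{k,\vp,\nu}\in\{-1,0,1\}$. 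This term $m=m_{r_0}$ is exactly the one omitted by ${{\sum}^*}$, so the restricted sum runs over $\Z\setminus\{r_0\}$ when $\d_{k,\vp,\nu}=1$ and over all of $\Z$ otherwise.

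For the full sum over $r\in\Z$, the $(1+i\e)$ prescription inherited from Lemma \ref{lem:seclem} makes the conditionally convergent series $\sum_{r\in\Z}\frac1{t-B_0-6kr(1+i\e)}$ converge as $\e\to0^+$ to the symmetric (principal value) limit, which by the partial-fraction expansion of the cotangent equals $\frac\pi{6k}\cot(\frac\pi{6k}(t-B_0))=\frac\pi{6k}\cot(\frac\pi{6k}(t-\gcd(k,6)(\nu+\frac12)+\frac k2-\vp+3kz))$. Hence the restricted $r$-sum equals this cotangent minus $\d_{k,\vp,\nu}\frac1{t+3kz+\g_{k,\vp,\nu}}$. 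Collecting the outer constant $\frac{e^{2\pi D/(KW)}}{\pi i}$, the factor $\frac\pi{6k}$ produced by the cotangent sum, the pulled-out $e^{-\frac{\pi i}{6K}}$, absorbing $e^{\frac{2\pi D}{KW}}=e^{\frac\pi{6kZ}}$ into the Gaussian to produce $e^{\frac\pi{6kZ}(1-t^2)}$, and relabeling $t$ as $x$, yields the asserted formula. The main obstacle is this last step: rigorously interchanging the $r$-sum, the $t$-integral and the limit $\e\to0^+$, and identifying $\sum_{r}\frac1{t-B_r(1+i\e)}$ with the principal-value cotangent rather than a shifted branch of it — this is handled exactly as in (3.11) of \cite{CN} and in the proof of Lemma \ref{lem:calE*}. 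What is genuinely new here, beyond that argument, is only the elementary bookkeeping sketched above (via Lemma \ref{L:alphabadms} and \eqref{E:deltadef}--\eqref{E:gammadef}) of the single term removed by ${{\sum}^*}$, which produces the correction $-\frac{6k\d_{k,\vp,\nu}}{\pi(x+\g_{k,\vp,\nu}+3kz)}$.
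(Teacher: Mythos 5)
Your proposal is correct and follows essentially the same route as the paper's proof: the same reindexing $m=6Kr+\nu-\mu+\tfrac12$, the same identification via Lemma \ref{L:alphabadms} and \eqref{E:gammadef}--\eqref{E:deltadef} of the single excluded $r_0$ producing the $-\frac{6k\d_{k,\vp,\nu}}{\pi(x+\g_{k,\vp,\nu}+3kz)}$ correction, the same cotangent partial-fraction identity for the full $r$-sum, and the same rescaling $x\mapsto x/\gcd(k,6)$. Your additional remarks on the $r$-independence of the phase and on the $(1+i\e)$ prescription forcing the symmetric (principal-value) limit are correct and merely make explicit what the paper leaves implicit.
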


\begin{proof}
	Let $m=6Kr+\nu-\mu+\frac12$ with $0\le\nu\le6K-1$ and $r\in\Z$. Then $-\mu-\a_{H,K}=-\frac K2+\frac{\varrho}{\gcd(k,6)}$, and we have
	\begin{multline}\label{E:EEstarint}
		\E_{\frac{[-H]_K}{K},D}^*\left(\frac{\b}{iW}-\frac{1}{6K};\frac{[-H]_K}{K}+\frac{i}{KW}\right) = \frac{e^{\frac{2\pi D}{KW}-\frac{\pi i}{6K}}}{\pi i}\sum_{\nu=0}^{6K-1} (-1)^{\nu+\mu}e^{\pi i\left(\frac{\mu-\nu}{3K}+\frac{[-H]_K}{K}\left(\nu-\mu+\frac12\right)^2\right)}\\
		\times \lim_{N\to\infty} \sum_{\substack{-N\le r\le N\\\left|6Kr+\nu+\frac12-\frac K2+\frac{\varrho}{\gcd(k,6)}\right| > \frac{1}{\gcd(k,6)}}} \int_{-\sqrt{2D}}^{\sqrt{2D}} \frac{e^{-\frac{\pi x^2}{KW}}}{x-\left(6Kr+\nu+\frac12-\frac K2+\frac{\varrho}{\gcd(k,6)}-3Kz\right)} dx.
	\end{multline}
	Now \Cref{L:alphabadms} implies that $|6Kr+\nu+\frac12-\frac K2+\frac{\varrho}{\gcd(k,6)}|\le\frac{1}{\gcd(k,6)}$ if and only if $\d_{k,\varrho,\nu}=1$ and
	\[
		6Kr + \nu + \frac12 - \frac K2 + \frac{\varrho}{\gcd(k,6)} = -\frac{\g_{\varrho,k,\nu}}{\gcd(k,6)}.
	\]
	Thus, using $\pi \cot(\pi x)=\lim_{N \to \infty} \sum_{-N \leq r \leq N} \frac{1}{x+r}$, we have 
	\begin{multline*}
		\lim_{N\to\infty}\sum_{\substack{-N\le r\le N\\\left|6Kr+\nu+\frac12-\frac K2+\frac{\varrho}{\gcd(k,6)}\right| > \frac{1}{\gcd(k,6)}}} \frac{1}{x-\left(6Kr+\nu+\frac12-\frac K2+\frac{\varrho}{\gcd(k,6)}-3Kz\right)}\\
		= \frac{\pi}{6K}\left(\cot\left(\frac{\pi}{6K}\left(x-\nu-\frac12+\frac K2-\frac{\varrho}{\gcd(k,6)}+3Kz\right)\right) - \frac{6K\d_{k,\vp,\nu}}{\pi\left(x+\frac{\g_{k,\vp,\nu}}{\gcd(k,6)}+3Kz\right)}\right).
	\end{multline*}
	Note that if $\d_{k,\vp,\nu}=1$, then the above function has a removable singularities corresponding to the denominators in the right three terms. Plugging this into \eqref{E:EEstarint} and setting $D=\frac{1}{2\gcd(k,6)^2}$, we have
	\begin{multline*}
		\frac{e^{-\frac{\pi i}{6K}}}{6K i}\sum_{\nu=0}^{6K-1} (-1)^{\nu+\mu} e^{\pi i\left(\frac{\mu-\nu}{3K}+\frac{[-H]_K}{K}\left(\nu-\mu+\frac12\right)^2\right)} \int_{-\frac{1}{\gcd(k,6)}}^\frac{1}{\gcd(k,6)} e^{\frac{\pi}{K\gcd(k,6)^2W}-\frac{\pi x^2}{KW}}\\
		\times \left(\cot\left(\tfrac{\pi}{6K}\left(x-\nu-\tfrac12+\tfrac K2-\tfrac{\vp}{\gcd(k,6)}+3Kz\right)\right) -   \frac{6K\d_{k,\vp,\nu}}{\pi\left(x+\frac{\g_{k,\vp,\nu}}{\gcd(k,6)}+3Kz\right)}\right) dx.
	\end{multline*}
	Making the change of variables $x\mapsto\frac{x}{\gcd(k,6)}$ gives the claim. \qedhere
\end{proof}

\section{Taylor coefficients}\label{sec:taylorcoeff}

\subsection{Unrestricted unimodal sequences}

The following theorem determines the main term of derivatives (in $z$) of $\calU_1$.

\begin{theorem}\label{T:unrestrictedtaylormainterm}
	Let $\ell\in2\N_0$. For $Z\in\C$ with $\re(\frac1Z)\ge\frac k2$ and $|Z|\ll\frac1k$, we have
	\begin{multline*}
		\left[\frac{\del^\ell}{\del z^\ell} \calU_1\left(z;\frac hk+\frac{iZ}{k}\right)\right]_{z=0} = \frac{(2\pi i)^\ell i^\frac32}{4\sqrt{6}k}\chi\mat{[-h]_k&-\frac{h[-h]_k +1}{k}\\k&-h}^{-1}\\
		\times \sum_{\nu=0}^{2K-1} (-1)^\nu e^{\frac{\pi i[-h]_k}{12k}(12\nu(\nu+1)+1)}\sum_{j=0}^\frac\ell2 \binom{\ell}{2j}\sum_{\substack{a,b,c\ge0\\a+b+c=j}} k^a\k(a,b,c)Z^{\frac12-a-2c}\\
		\times \int_{-1}^1 C_{\ell-2j}\left(\frac{1}{2k}\left(\frac{x}{\sqrt{6}}-\nu-\frac12\right)\right) e^{\frac{\pi}{6kZ}\left(1-x^2\right)} dx + O\left(\log(k)|Z|^{\frac12-\ell}\right).
	\end{multline*}
\end{theorem}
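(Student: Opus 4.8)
The plan is to insert the transformation of Theorem~\ref{thm:u1} into the explicit descriptions of the Eichler integral $\calE_{\frac{[-h]_k}{k}}$ provided by Lemma~\ref{L:EichlerRewrite}, the splitting displayed just before Lemma~\ref{lem:calEbound}, and Lemma~\ref{lem:calE*}, and then to expand in $z$ and read off Taylor coefficients at $z=0$; all functions occurring are holomorphic in $z$ on a common small disc about $0$ (the piece built from $\psi$ since $\Im(z)=0$ at the center), so this is legitimate. Throughout put $\t_d:=\frac{[-h]_k}{k}+\frac{i}{kZ}$ and $q_d:=e^{2\pi i\t_d}$; since $\re(\tfrac1Z)\ge\tfrac k2$ we have $\Im(\t_d)=\tfrac1k\re(\tfrac1Z)\ge\tfrac12$, so $|q_d|\le e^{-\pi}$, and since $|Z|\ll\tfrac1k$ we have $k|Z|\ll1$. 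By Theorem~\ref{thm:u1},
\[
	\calU_1\!\left(z;\tfrac hk+\tfrac{iZ}k\right)=-i\,\chi\mat{[-h]_k&-\frac{h[-h]_k+1}{k}\\k&-h}^{-1}(iZ)^{-\frac12}\big(T_1+T_2\big),
\]
with $T_1:=f_{3k}(z;-Z)\,\calU_1(\tfrac{z}{iZ};\t_d)$ and $T_2:=\tfrac12 f_k(z;Z)\,\tfrac{\calC^*(z/(iZ);\t_d)}{\eta(\t_d)}\,\calE_{\frac{[-h]_k}{k}}(\tfrac{2z}{iZ};\t_d)$. For $T_1$: the $q$-expansion of $\calU_1(w;\t)$ has lowest exponent $\tfrac18-\tfrac1{12}=\tfrac1{24}>0$, whence $\calU_1(w;\t_d)$ and each of its $w$-derivatives is $O(|q_d|^{1/24})$ uniformly for $w$ near $0$; combined with $[\del_z^j f_{3k}(z;-Z)]_{z=0}\ll_j|Z|^{1-j}$ (immediate from \eqref{E:f_kdef} and $k|Z|\ll1$; cf.\ Lemma~\ref{lem:boundsfnuck}), the Leibniz rule and the factor $(iZ)^{-\frac12}$ give that the $T_1$-contribution to $[\del_z^\ell\calU_1]_{z=0}$ is $\ll|Z|^{\frac12-\ell}|q_d|^{1/24}\ll\log(k)|Z|^{\frac12-\ell}$.

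To handle $T_2$, I would first write, using the infinite products for $\calC^*$ and $\eta$, $\tfrac{\calC^*(z/(iZ);\t_d)}{\eta(\t_d)}=q_d^{-1/12}\prod_{n\ge1}(1-\z_d q_d^n)^{-1}(1-\z_d^{-1}q_d^n)^{-1}$ with $\z_d:=e^{2\pi z/Z}$, note $q_d^{-1/12}=e^{-\pi i[-h]_k/(6k)}e^{\pi/(6kZ)}$, and substitute the splitting $\calE_{\frac{[-h]_k}{k}}(\tfrac{2z}{iZ};\t_d)=e^{-\pi/(6kZ)}\big(\calE^{*}_{\frac{[-h]_k}{k},\frac1{12}}(\tfrac{2z}{iZ};\t_d)+\calE^{e}_{\frac{[-h]_k}{k},\frac1{12}}(\tfrac{2z}{iZ};\t_d)\big)$ (the display preceding Lemma~\ref{lem:calEbound}, which rests on Lemma~\ref{L:EichlerRewrite}). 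The factors $e^{\pm\pi/(6kZ)}$ cancel, giving
\begin{multline*}
	T_2=\tfrac12\,e^{-\pi i[-h]_k/(6k)}f_k(z;Z)\big(1+R(z)\big)\\
	\times\Big(\calE^{*}_{\frac{[-h]_k}{k},\frac1{12}}\big(\tfrac{2z}{iZ};\t_d\big)+\calE^{e}_{\frac{[-h]_k}{k},\frac1{12}}\big(\tfrac{2z}{iZ};\t_d\big)\Big),
\end{multline*}
where $R(z):=\prod_{n\ge1}(1-\z_d q_d^n)^{-1}(1-\z_d^{-1}q_d^n)^{-1}-1$. A short computation with $\log(1+R)$ gives $[\del_z^j R(z)]_{z=0}\ll_j|Z|^{-j}|q_d|$; Lemma~\ref{lem:calEbound} gives $[\del_z^j\calE^{e}_{\frac{[-h]_k}{k},\frac1{12}}(\tfrac{2z}{iZ};\t_d)]_{z=0}\ll\log(k)+k^j$; and Lemma~\ref{lem:calE*}, together with the fact that the arguments $\tfrac1{2k}(\tfrac{x}{\sqrt6}-\nu-\tfrac12)$ ($x\in[-1,1]$) of $\cot$ lie in $(-1,0)$ at distance $\gg\tfrac{1+\min(\nu,\,2k-1-\nu)}{k}$ from $\Z$, yields
\[
	\Big[\del_z^j\calE^{*}_{\frac{[-h]_k}{k},\frac1{12}}\big(\tfrac{2z}{iZ};\t_d\big)\Big]_{z=0}\ll_j\big(\log(k)+k^j\big)\int_{-1}^1 e^{\frac{\pi}{6k}\re(1/Z)(1-x^2)}\,dx.
\]
Inserting these bounds, $[\del_z^j f_k(z;Z)]_{z=0}\ll_j|Z|^{1-j}$, $k|Z|\ll1$, and the inequality $|q_d|\int_{-1}^1 e^{\frac{\pi}{6k}\re(1/Z)(1-x^2)}dx\ll1$ (which uses $\re(\tfrac1Z)\ge\tfrac k2$) into the Leibniz rule, one finds that the $R(z)$-term and the $\calE^{e}$-term together contribute $O(\log(k)|Z|^{\frac12-\ell})$ to $[\del_z^\ell\calU_1]_{z=0}$.

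It then remains to evaluate the main term, namely $-i\,\chi(\cdots)^{-1}(iZ)^{-\frac12}\cdot\tfrac12 e^{-\pi i[-h]_k/(6k)}f_k(z;Z)\,\calE^{*}_{\frac{[-h]_k}{k},\frac1{12}}(\tfrac{2z}{iZ};\t_d)$. Insert the formula of Lemma~\ref{lem:calE*} and differentiate $\ell$ times at $z=0$ by the Leibniz rule. Only even derivatives of $f_k$ contribute, and $[\del_z^{2j}f_k(z;Z)]_{z=0}=(2\pi i)^{2j}\sum_{a+b+c=j}k^a\k(a,b,c)Z^{1-a-2c}$ by \eqref{E:f_kdef}; on the other hand, since $\del_z\big(\tfrac1{2k}(\tfrac{x}{\sqrt6}-\nu-\tfrac12+2kz)\big)=1$, the definition of $C_s$ gives $[\del_z^s\cot(\pi(\,\cdot\,))]_{z=0}=(2\pi i)^s C_s\big(\tfrac1{2k}(\tfrac{x}{\sqrt6}-\nu-\tfrac12)\big)$. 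Writing $\ell=2j+s$ with Leibniz weight $\binom{\ell}{2j}$, using $(2\pi i)^{2j}(2\pi i)^s=(2\pi i)^\ell$, $(iZ)^{-\frac12}Z^{1-a-2c}=i^{-1/2}Z^{\frac12-a-2c}$, the constant identity $-i\cdot\tfrac12\cdot\tfrac1{2\sqrt6\,ki}\cdot i^{-1/2}=\tfrac{i^{3/2}}{4\sqrt6\,k}$, and the phase identity $e^{-\pi i[-h]_k/(6k)}e^{\pi i(\nu+\frac12)^2[-h]_k/k}=e^{\frac{\pi i[-h]_k}{12k}(12\nu(\nu+1)+1)}$, the main term collapses to exactly the asserted expression. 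Together with the error bounds above this proves the theorem.

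I expect the main obstacle to be the error analysis of the third paragraph: the main term is exponentially large in $\re(\tfrac1Z)/k$, the derivatives of $f_k$ blow up like $|Z|^{1-j}$ as $|Z|\to0$, and $\calE^{e}$ is controlled only by a power of $k$, so forcing everything into the stated polynomial error $O(\log(k)|Z|^{\frac12-\ell})$ relies on playing $\re(\tfrac1Z)\ge\tfrac k2$ (to beat the exponential growth of $\calE^{*}_{1/12}$ against the exponential smallness of $|q_d|$) and $k|Z|\ll1$ (to beat the $k$-powers of $\calE^{e}$ against $|Z|^{1-j}$) against each other. The remaining tracking of normalizing constants and phases in the assembly is lengthy but mechanical.
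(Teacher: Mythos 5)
Your proposal is correct and follows essentially the same route as the paper: transform via Theorem \ref{thm:u1}, discard the $f_{3k}\cdot\calU_1$ piece using the positive $q$-power $q^{1/24}$, split the Eichler integral as $\calE^*_{\frac{[-h]_k}{k},\frac1{12}}+\calE^e_{\frac{[-h]_k}{k},\frac1{12}}$ with Lemmas \ref{lem:calEbound} and \ref{lem:calE*}, and extract the main term by Leibniz on $f_k\cdot\cot$ via \eqref{E:f_kdef}. The only (harmless) deviation is that you pair the non-principal part $R(z)$ of $\calC^*/\eta$ with the full $D=\tfrac1{12}$ decomposition and beat the exponential growth of $\calE^*_{1/12}$ with $|q_d|$, whereas the paper pairs that remainder with $\calE^e_0$ and so never needs that extra estimate; both yield the stated error.
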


\begin{proof}
	We first write  
	\begin{equation}\label{E:Cstaretaprinciplepart}
		\frac{\calC^*(z;\t)}{\eta(\t)} = q^{-\frac{1}{12}}\left(1+\sum_{j\ge0} \g_j(\t)\frac{(2\pi iz)^j}{j!}\right),
	\end{equation}
	where $|\g_j(\t)|\ll e^{2\pi i \tau}$ as $\t\to i\infty$. Recall the transformation of $\calU_1$ stated in Theorem \ref{thm:u1}. We first bound the Taylor coefficients in the first term (without prefactors). Define 
	\[
		\calU_1(z;\t) =: q^\frac{1}{24}\sum_{j\ge0} a_j(\t)\frac{(2\pi i z)^j}{j!}.
	\]
	We write
	\begin{align*}
		&f_{3k}\left(z;-Z\right) \calU_1\left(\frac{z}{iZ};\frac{[-h]_k}{k}+\frac{i}{kZ}\right)\\
		&\hspace{2.75cm}= e^{\frac{\pi i}{12}\left(\frac{[-h]_k}{k}+\frac{i}{kZ}\right)} \sum_{\ell\ge0} \left(\ell! \sum_{\substack{r,j\ge0\\j+2r=\ell}} \frac{b_r\left(3k;-Z\right)}{(2r)!j !} a_j\left(\frac{[-h]_k}{k}+\frac{i}{kZ}\right)\left(-\frac{1}{Z} \right)^j\right) \frac{(2\pi iz)^{\ell}}{\ell!}.
	\end{align*}
	Note that $a_j(\frac{[-h]_k}{k}+\frac{i}{kZ}) \ll 1$ as $Z \to 0$ with $\mathrm{Re}(Z)>0$, so 
	%Thus, using {\bf WB: add reference}, we have
	\begin{equation*}%\label{E:unrestrictedholomorphictaylorcoeffbound}
		\left[\frac{\del^{\ell}}{\del z^{\ell}}\left( f_{3k}\left(z;-Z\right) \calU_1\left(\frac{iz}{Z};\frac{[-h]_k}{k}+\frac{i}{kZ}\right)\right) \right]_{z=0}\ll {|Z|}^{1-\ell} e^{-\frac{\pi}{12k}\re\left(\frac{1}{Z}\right)}.
	\end{equation*}
	
	The main term comes from the second term in Theorem \ref{thm:u1}.  Using the decomposition of $\calE$ and subtracting the principal part from $\frac{\calC^*(z;\t)}{\eta(\t)}$ in \eqref{E:Cstaretaprinciplepart}, the second term in Theorem \ref{thm:u1} equals 
	\begin{align}\nonumber
		&-\frac i2\chi\mat{[-h]_k&-\frac{h[-h]_k +1}{k}\\k&-h}^{-1}(iZ)^{-\frac12}f_k(z;Z)\\
		\nonumber
		&\hspace{1cm}\times \left({\vphantom{\frac{\left(\frac{[-h]_k}{k}\right)}{\left(\frac{[-h]_k}{k}\right)}}} e^{-\frac{\pi i[-h]_k}{6k}}\calE_{\frac{[-h]_k}{k},\frac{1}{12}}^*\left(\frac{2z}{iZ}; \frac{[-h]_k}{k}+\frac{i}{kZ}\right) + e^{-\frac{\pi i[-h]_k}{6k}} \calE_{\frac{[-h]_k}{k},\frac{1}{12}}^e\left(\frac{2z}{iZ}; \frac{[-h]_k}{k}+\frac{i}{kZ}\right)\right.\\
		\label{E:unrestrictedsecondtermdecomposition}
		&\hspace{2.7cm}\left.+\left( \frac{\calC^*\left(\frac{z}{iZ};\frac{[-h]_k}{k}+\frac{i}{kZ}\right)} {\eta\left(\frac{[-h]_k}{k}+\frac{i}{kZ}\right)} - e^{-\frac{\pi i}{6}\left(\frac{[-h]_k}{k}+\frac{i}{kZ}\right)}\right) \calE_{\frac{[-h]_k}{k},0}^e\left(\frac{2z}{iZ};\frac{[-h_k]}{k}+\frac{i}{kZ}\right)\right).
	\end{align}
	It is easy to show using Lemma \ref{lem:boundsfnuck} and Lemma \ref{lem:calEbound} that for $0\leq D\leq \frac{1}{12}$ we have 
	\begin{equation}\label{E:EeDbound}
		\left[\frac{\del^{\ell}}{\del z^{\ell}}\left( f_{k}(z;Z) \calE^e_{\frac{[-h]_k}{k},D}\left(\frac{2z}{iZ};\frac{[-h]_k}{k}+ \frac{i}{kZ}\right)\right)\right]_{z=0} \ll \log(k)|Z|^{1-\ell}.
	\end{equation}
	Combining \eqref{E:unrestrictedsecondtermdecomposition} and \eqref{E:EeDbound}, we have
	\begin{multline}\label{E:unrestrictedmaintermEstar}
		\left[\frac{\del^\ell}{\del z^\ell} \calU_1\left(z;\frac hk+\frac{iZ}{k}\right)\right]_{z=0} = -\frac i2\chi\mat{[-h]_k&-\frac{h[-h]_k +1}{k}\\k&-h}^{-1} (iZ)^{-\frac12}  e^{-\frac{\pi i[-h]_k}{6k}}\\
		\times \left[\frac{\del^\ell}{\del z^\ell}\left( f_k(z;Z) \calE_{\frac{[-h]_k}{k},\frac{1}{12}}^*\left(\frac{2z}{iZ};\frac{[-h]_k}{k}+\frac{i}{kZ}\right)\right)\right]_{z=0} + O\left(\log(k)|Z|^{\frac12-\ell}\right).
	\end{multline}
	By Lemma \ref{lem:calE*}, we have
	\begin{multline*}
		\left[\frac{\del^{\ell}}{\del z^{\ell}}\left( f_k(z;Z)\calE_{\frac{[-h]_k}{k},\frac{1}{12}}^* \left(\frac{2z}{iZ}; \frac{[-h]_k}{k}+\frac{i}{kZ}\right)\right) \right]_{z=0} =-\frac{i}{2\sqrt{6}k}\sum_{\nu=0}^{2K-1} (-1)^\nu e^{\pi i\left(\nu+\frac12\right)^2\frac{[-h]_k}{k}}\\
		\times  \left[\frac{\del^\ell}{\del z^\ell} \left(f_k(z;Z)\int_{-1}^{1} \cot\left(\frac{\pi}{2k}\left(\frac{x}{\sqrt{6}}-\nu- \frac12+2kz\right)\right)\right)\right]_{z=0}e^{\frac{\pi }{6kZ}\left(1-x^2\right)}dx.
	\end{multline*}
	We next compute, using the product rule and the fact that $f_k$ is an even function,
	\begin{multline*}
		\left[\frac{\del^\ell}{\del z^\ell} \left(f_k(z;Z)\cot\left(\frac{\pi}{2k}\left(\frac{x}{\sqrt{6}}-\nu-\frac12+2kz\right)\right)\right)\right]_{z=0}\\
		= (2\pi i)^{\ell}\sum_{\substack{a,b,c\geq0\\0\le j\le\frac\ell2}} \binom{\ell}{2j}\sum_{\substack{a,b,c\geq0\\a+b+c=j}} k^a\k(a,b,c)Z^{1-a-2c}C_{\ell-2j}\left(\frac{1}{2k}\left(\frac{x}{\sqrt{6}}-\nu-\frac12\right)\right).
	\end{multline*}
	Combining with the prefactors in \eqref{E:unrestrictedmaintermEstar} completes the proof.  
\end{proof}

\subsection{Durfee unimodal sequences}

The following theorem determines the main terms of derivatives of $\calV_1$. Define the following two indicator functions $\e_{\vp,k}^+$ and $\e_{\vp,k}^-$,
\begin{equation}\label{E:epsilonpmdef}
	\e_{k,\vp}^\pm :=
	\begin{cases}
		1 & \text{if } \vp\equiv\pm1\Pmod{\gcd(k,3)} \ \text{and } \gcd(k,12) \in \{1,2,3,6\},\\
		0 & \text{otherwise.}
	\end{cases}
\end{equation}
We have the following for the derivatives of $\mathcal{V}_1$.

\begin{theorem}\label{T:durfeeTaylormainterm}
	Let $\ell\in2\N_0$ and suppose $h=\gcd(k,6)\mu+\vp$ with $0\le\vp<\gcd(k,6)$. For $Z\in\C$ with $\re(\frac1Z)\ge\frac k2$ and $|Z|\ll\frac1k$, we have 
	\begin{multline*}
		\left[\frac{\del^\ell}{\del z^\ell} \calV_1\left(z;\frac hk+\frac{iZ}{k}\right)\right]_{z=0} = (2\pi i)^\ell\frac{i^\frac12e^{\frac{\pi i}{6k}\left(k-h-[-h]_k-\gcd(k,6)\right)}\sqrt{\gcd(k,6)}}{6\sqrt{6}k}\chi_{h,k}\\
		\times \sum_{\nu=0}^{6K-1} (-1)^{\nu+\mu}e^{\pi i\left(\frac{\mu-\nu}{3K}+\frac{[-H]_K}{K}\left(\nu-\mu+\frac12\right)^2\right)} \sum_{j=0}^\frac\ell2 \binom{\ell}{2j}\sum_{\substack{a,b,c\ge0\\a+b+c=j}} k^a\k(a,b,c)Z^{\frac12-a-2c}2^{2j-\ell}\int_{-1}^1 e^{\frac{\pi}{6kZ}\left(1-x^2\right)}\\
		\times \left(C_{\ell-2j}\left(\tfrac{1}{6k}\left(x-\gcd(k,6)\left(\nu+\tfrac12\right)+\tfrac k2-\varrho\right)\right) - \d_{k,\vp,\nu}(-1)^{\frac\ell2+j}2(\ell-2j)!\left(\tfrac{3k}{\pi (x+\g_{k,\vp,\nu})}\right)^{\ell-2j+1}\right) dx\\
		\\
		+ (2\pi i)^\ell i^{-\frac12} \sum_\pm \e_{k,\vp}^\pm (-1)^{\a_{H,K}\pm\frac{1}{\gcd(k,6)}+\frac12}e^{\pi i\left(-\frac{[-h]_k}{6k}+\frac{[-H]_K}{K}\left(\a_{H,K}\pm\frac{1}{\gcd(k,6)}\right)^2 + \frac{H}{36K}\mp\frac{1}{3\gcd(k,6)K}\right)} \hspace{-.12cm} \sqrt{\gcd(k,6)} \\
		\times \hspace{-.1cm} \chi_{h,k}  \hspace{-.1cm} \left(e^\frac{\pi}{6kZ} \hspace{-.2cm} \sum_{\substack{j_1,j_3\ge0,j_2\ge1\\2j_1+j_2+j_3=\ell}} \sum_{\substack{a,b,c\ge0\\a+b+c=j_1}} \sum_{0\le j_4\le\frac{j_2-1}{2}} \frac{\ell!3^{j_4}k^{a+\frac12+j_4}}{(2j_1)!j_2\cdot j_3!j_4!(j_2-1-2j_4)!2^{\ell+a-2j_1+j_4}\gcd(k,6)^{j_3}\pi^{j_4+1}}\right.\\
		\left.\times \k(a,b,c)(-1)^{a}(\pm i)^{j_2+1}(\mp i)^{j_3}Z^{1-a-2c-j_2-j_3+j_4}\right.\\
		\left.\mp \int_0^1 e^\frac{\pi t^2}{6kZ} dt \sum_{\substack{j_1,j_3\ge0\\2j_1+j_3=\ell}} \sum_{\substack{a,b,c\ge0\\a+b+c=j_1}} \binom{\ell}{2j_1} \k(a,b,c)\frac{k^{a-\frac12}(-1)^{a+1}}{3\cdot2^{a+j_3}}\left(\frac{\pm i}{\gcd(k,6)}\right)^{j_3}Z^{-a-2c-j_3}\right)\\
		+ O\left(\log(k)|Z|^{\frac12-\ell}\right).
	\end{multline*}
\end{theorem}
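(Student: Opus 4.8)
The plan is to run the proof of Theorem~\ref{T:unrestrictedtaylormainterm} in the Durfee setting, starting now from the transformation of $\calV_1$ in Lemma~\ref{lem:calv}. First I would substitute the Laurent expansion \eqref{E:Cstaretaprinciplepart} for the factor $\frac{\calC^*}{\eta}$ evaluated at $\big(\frac{z}{iZ};\frac{[-h]_k}{k}+\frac{i}{kZ}\big)$ and keep only the principal part $q^{-\frac1{12}}$ (the constant term $1$ in \eqref{E:Cstaretaprinciplepart}); since $\re\big(\frac{1}{kZ}\big)\ge\frac12$, the coefficients $\g_j$ with $j\ge1$ are exponentially small and their contribution is absorbed into the error. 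This breaks $\big[\frac{\del^\ell}{\del z^\ell}\calV_1(z;\frac hk+\frac{iZ}{k})\big]_{z=0}$ into the contributions of the three summands of Lemma~\ref{lem:calv}: a \emph{false-theta piece} built from $f_{\frac k2}(z;-Z)\,\Psi$, an \emph{Eichler piece} built from $f_k(z;Z)\,\E_{\frac{[-H]_K}{K}}$, and a \emph{finite piece} built from the sum $\sum^{**}_m$ of error functions.

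The next step is to show that everything outside the two displayed blocks is $O\big(\log(k)|Z|^{\frac12-\ell}\big)$. For the false-theta piece I would combine the Gaussian prefactors $e^{-\frac{\pi\a_{H,K}^2}{KW}-\frac{6\pi\a_{H,K}z}{W}}$ of Lemma~\ref{lem:calv} with the $m$-th summand of $\Psi$ and complete the square: each surviving term (those with $|m-\a_{H,K}|>\frac1{\gcd(k,6)}$, by the restriction in $\Psi$) picks up a factor $\ll e^{-\pi\re(\frac1{KW})(m-\a_{H,K})^2}$, and since $\re\big(\frac{1}{KW}\big)=\frac{\gcd(k,6)^2}{6k}\re\big(\frac1Z\big)$ this is enough to beat the growth $\frac{\calC^*}{\eta}\ll e^{\frac{\pi}{6k}\re(\frac1Z)}$ of the remaining $\frac{\calC^*}{\eta}$ factor; together with $(iW)^{-\frac12}\ll|Z|^{-\frac12}$, the bounds $f_{\frac k2}^{(\ell')}(0;-Z)\ll|Z|^{1-\ell'}$ (cf. Lemma~\ref{lem:boundsfnuck}) and a convergent sign-alternating sum over $m$ contributing $\ll\log(k)$, this piece is $\ll\log(k)|Z|^{\frac12-\ell}$, exactly as the $\calU_1$-term is treated in Theorem~\ref{T:unrestrictedtaylormainterm}. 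For the Eichler piece I would first use Lemma~\ref{lem:seclem} and \eqref{E:wtcalEDdefs} to write $\E_{\frac{[-H]_K}{K}}=e^{-\frac{2\pi D}{KW}}\big(\E^*_{\frac{[-H]_K}{K},D}+\E^e_{\frac{[-H]_K}{K},D}\big)$ with $D=\frac{1}{2\gcd(k,6)^2}$, and then, combining Lemma~\ref{lem:del} with Lemma~\ref{lem:boundsfnuck} in the same way that \eqref{E:EeDbound} is obtained, deduce $\big[\frac{\del^\ell}{\del z^\ell}\big(f_k(z;Z)\,\E^e_{\frac{[-H]_K}{K},D}\big)\big]_{z=0}\ll\log(k)|Z|^{1-\ell}$; after the prefactor $(iW)^{-\frac12}\frac{\calC^*}{\eta}e^{-\frac{2\pi D}{KW}}$, whose factor $e^{-\frac{2\pi D}{KW}}=e^{-\frac{\pi}{6kZ}}$ cancels the growth of $\frac{\calC^*}{\eta}$, this is again $\ll\log(k)|Z|^{\frac12-\ell}$.

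The two main terms come from $\E^*_{\frac{[-H]_K}{K},D}$ (with $D=\frac{1}{2\gcd(k,6)^2}$) and from the finite piece. For the former I would insert the Mordell-type representation of Lemma~\ref{lem:lem46}, which already provides the integrand $\big(\cot(\tfrac{\pi}{6k}(x-\gcd(k,6)(\nu+\tfrac12)+\tfrac k2-\vp+3kz))-\tfrac{6k\d_{k,\vp,\nu}}{\pi(x+\g_{k,\vp,\nu}+3kz)}\big)e^{\frac{\pi}{6kZ}(1-x^2)}$ and the phases $e^{\pi i(\frac{\mu-\nu}{3K}+\frac{[-H]_K}{K}(\nu-\mu+\frac12)^2)}$ via the re-indexing $m\mapsto 6Kr+\nu-\mu+\frac12$; then, exactly as in the closing display of the proof of Theorem~\ref{T:unrestrictedtaylormainterm}, I would expand the product of $f_k(z;Z)$ with that cotangent-minus-pole factor in powers of $z$ using \eqref{E:f_kdef} and the definition of $C_j$, obtaining as $\ell$-th coefficient $(2\pi i)^\ell\sum_{j}\binom{\ell}{2j}\sum_{a+b+c=j}k^a\k(a,b,c)Z^{1-a-2c}$ times $C_{\ell-2j}(\cdots)$ minus the $\d_{k,\vp,\nu}$-term, where differentiating $(x+\g_{k,\vp,\nu}+3kz)^{-1}$ exactly $(\ell-2j)$ times yields $(-1)^{\frac\ell2+j}2(\ell-2j)!\big(\tfrac{3k}{\pi(x+\g_{k,\vp,\nu})}\big)^{\ell-2j+1}$; collecting the constants $(iW)^{-\frac12}$, $\chi_{h,k}$, $\sqrt{\gcd(k,6)}$, the phase $i^{\frac12}e^{\frac{\pi i}{6k}(k-h-[-h]_k-\gcd(k,6))}$ and the factor $2^{2j-\ell}$ coming from $\b=3z-\frac hk+\frac12$ entering quadratically gives the first block. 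For the finite piece, Lemma~\ref{L:alphabadms} and the indicators \eqref{E:epsilonpmdef} show that, for $\ell$ even, only the bad indices $m=\a_{H,K}\pm\frac1{\gcd(k,6)}$ survive (every bad index with $\a_{H,K}-m=0$ gives $\erf(i\cdot 3Kz\sqrt{\pi/(KW)})$, which is odd in $z$). For such $m$ I would set $c_0:=\a_{H,K}-m=\mp\frac1{\gcd(k,6)}$ (the $\mp$ opposite to the $\pm$ of $\sum_\pm$) and split $\erf(i(3Kz+c_0)\sqrt{\pi/(KW)})$ as $\frac{2i}{\sqrt\pi}$ times the sum of $\int_0^{c_0\sqrt{\pi/(KW)}}e^{t^2}dt$ and $\int_{c_0\sqrt{\pi/(KW)}}^{(3Kz+c_0)\sqrt{\pi/(KW)}}e^{t^2}dt$. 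The first ($z$-independent) integral, after $t=c_0\sqrt{\pi/(KW)}\,s$ and using $c_0^2\gcd(k,6)^2=1$ and $KW\gcd(k,6)^2=6kZ$, equals $c_0\sqrt{\pi/(KW)}\int_0^1 e^{\frac{\pi s^2}{6kZ}}ds$; multiplying by $f_{\frac k2}(z;-Z)$, the remaining $z$-dependent prefactor $e^{\frac{\pi z}{Z}(m-\a_{H,K})+\pi i(\cdots)}$, and extracting the $\ell$-th coefficient produces the $\int_0^1 e^{\frac{\pi t^2}{6kZ}}dt$-block (with $2j_1+j_3=\ell$, $j_1$ from $f_{\frac k2}$ and $j_3$ from the exponential). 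The second integral, after $t=(3Kzs+c_0)\sqrt{\pi/(KW)}$, factors as $e^{\frac{\pi}{6kZ}}\cdot 3Kz\sqrt{\pi/(KW)}\int_0^1 e^{(9K^2z^2s^2+6Kc_0zs)\frac{\pi}{KW}}ds$; expanding the inner exponential in powers of $z$ and evaluating $\int_0^1 s^{\,j_2-1}ds=\frac1{j_2}$ explicitly produces the $e^{\frac{\pi}{6kZ}}$-block with indices $j_1,j_2\ge1,j_3,j_4$ ($j_1$ from $f_{\frac k2}$, $j_4$ the inner exponent, $j_2$ the total $z$-degree of the $\erf$-piece, $j_3$ from $e^{\frac{\pi z}{Z}(m-\a_{H,K})}$). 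Matching all constants with \eqref{E:epsilonpmdef} and Lemma~\ref{lem:lem24} (for the normalization) and absorbing the remainders into $O(\log(k)|Z|^{\frac12-\ell})$ completes the proof.

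The routine but heaviest part will be the bookkeeping in the finite piece: tracking the multinomial coefficients from the triple product rule, the explicit constants $i^{\pm\frac12}$, $(\pm i)^{j_2+1}$, $(\mp i)^{j_3}$, the powers of $2,3,k,\pi,\gcd(k,6)$, and the collapse of the half-integer powers of $Z$ (from $\sqrt{\pi/(KW)}$ and $(iW)^{-\frac12}$) to the integer exponents $Z^{1-a-2c-j_2-j_3+j_4}$ and $Z^{-a-2c-j_3}$ in the statement. The one conceptually new point compared with Theorem~\ref{T:unrestrictedtaylormainterm} is the bound on the false-theta piece in the borderline cases $\a_{H,K}\equiv\pm\frac12$, where the square-completion only just dominates the growth of $\frac{\calC^*}{\eta}$: there one really needs that $\Psi$ discards all indices within distance $\frac1{\gcd(k,6)}$ of $\a_{H,K}$, not merely $\a_{H,K}$ itself.
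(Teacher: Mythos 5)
Your proposal follows essentially the same route as the paper: the same three-way decomposition of Lemma \ref{lem:calv} (discarding the $\Psi$-piece by completing the square, bounding $\E^e$ via Lemmas \ref{lem:boundsfnuck} and \ref{lem:del}, extracting the first main block from Lemma \ref{lem:lem46}, and reducing the finite piece to $m=\a_{H,K}\pm\frac{1}{\gcd(k,6)}$ by parity and Lemma \ref{L:alphabadms}). The only micro-level deviation is that you derive the Taylor coefficients of the error-function terms by splitting and rescaling the defining integral rather than quoting the closed-form $\ell$-th derivative as the paper does; this is an equivalent computation and leads to the same two blocks.
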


\begin{proof}
	Recall the transformation of $\calV_1$ in Lemma \ref{lem:calv}. We proceed as for Theorem \ref{T:unrestrictedtaylormainterm}. First consider the term with $\Psi$. We use \eqref{E:Cstaretaprinciplepart} and observe that the exponential growth is determined by
	\begin{multline*}
		e^{\frac{\pi}{6kZ}-\frac{\pi \alpha_{H,K}^2}{KW}}\Psi\left(\tfrac{\b}{iW}-\tfrac{1}{6K};\tfrac{[-H]_K}{K}+\tfrac{i}{KW}\right)\\ = ie^{\frac{\pi}{6kZ}-\frac{\pi \alpha_{H,K}^2}{KW}}{{\sum}^{*}_{m\in\Z+\frac12}} \sgn\left( m-K\b\right) (-1)^{m-\frac12} e^{\pi im^2\left(\frac{[-H]_K}{K}+\frac{i}{KW}\right)+2\pi im\left(\frac{\b}{iW}-\frac{1}{6K}\right)}.
	\end{multline*}
	As a function of $W$, exponential growth of the $m$-th term above is $e^{\frac{\pi}{KW}(\frac{1}{\gcd(k,6)^2}-(\a_{H,K}-m)^2)}$, which gives exponential decay for  $|m-\alpha_{H,K}|> \frac{1}{\gcd(k,6)}$. It follows that after differentiating $\ell$ times with respect to $z$ and setting $z=0$, the term with $\Psi$ in Lemma \ref{lem:calv}, goes into the error.
	
	Using \eqref{E:wtcalEDdefs}, \Cref{lem:seclem}, subtracting the principal part from $\frac{\calC^*(z;\t)}{\eta(\t)}$, and noting that $\frac{1}{6kZ}=\frac{1}{\gcd(k,6)^2KW}$, we write the term in \Cref{lem:calv} with $\E$ as
	\begin{align}\nonumber
		&-\chi_{h,k} (iW)^{-\frac12} e^{-\frac{\pi ih}{6k}+\frac{\pi i}{6}} f_k(z;Z) \left({\vphantom{\frac{\left(\frac{[-h]_k}{k}\right)}{\left(\frac{[-h]_k}{k}\right)}}} e^{-\frac{\pi i[-h]_k}{6k}}\E_{\frac{[-H]_K}{K},\frac{1}{2\gcd(k,6)^2}}^* \left(\frac{\b}{iW}-\frac{1}{6K};\frac{[-H]_K}{K}+\frac{i}{KW}\right)\right.\\
		\nonumber
		&\hspace{6cm}+ e^{-\frac{\pi i[-h]_k}{6k}}\E_{\frac{[-H]_K}{K},\frac{1}{2\gcd(k,6)^2}}^e \left(\frac{\b}{iW}-\frac{1}{6K};\frac{[-H]_K}{K}+\frac{i}{KW}\right)\\
		\label{E:durfeecalEfulltaylor}
		&\hspace{0.6cm}\left.+ {\vphantom{\frac{\left(\frac{[-h]_k}{k}\right)}{\left(\frac{[-h]_k}{k}\right)}}} \left(\frac{\calC^*\left(\frac{z}{iZ}; \frac{[-h]_k}{k}+\frac{i}{kZ}\right)}{\eta\left(\frac{[-h]_k}{k}+\frac{i}{kZ}\right)} - e^{-\frac{\pi i}{6}\left(\frac{[-h]_k}{k}+\frac{i}{kZ}\right)}\right) \E_{\frac{[-H]_K}{K},0}^e\left(\frac{\b}{iW}-\frac{1}{6K}; \frac{[-H]_K}{K}+\frac{i}{KW}\right)\right).
	\end{align}
	It is easy to show using Lemmas \ref{lem:boundsfnuck} and \ref{lem:del} that for $0\le D\le\frac{1}{2\gcd(k,6)^2}$ we have
	\[
		\left[\frac{\del^\ell}{\del z^\ell} \left(f_k(z;Z)\mathbb{E}_{\frac{[-H]_K}{K},D}^e\left(\frac{\b}{iW} - \frac{1}{6K};\frac{[-H]_K}{K}+\frac{i}{KW}\right)\right)\right]_{z=0} \ll \log(k)|Z|^{1-\ell}.
	\]
	We now use Lemma \ref{lem:lem46} and the above to write \eqref{E:durfeecalEfulltaylor} as
	\begin{align}\nonumber
		&-\chi_{h,k} (iW)^{-\frac12} e^{-\frac{\pi ih}{6k}+\frac{\pi i}{6}-\frac{\pi i[-h]_k}{6k}}\left[\frac{\del^\ell}{\del z^\ell} \left(f_k(z;Z)  \E_{\frac{[-H]_K}{K},\frac{1}{2\gcd(k,6)^2}}^*\left(\frac{\b}{iW}-\frac{1}{6K}; \frac{[-H]_K}{K} +\frac{i}{KW}\right)\right)\right]_{z=0} \nonumber  \\
		\nonumber
		&\hspace{10cm}+ O\left(\log(k)|Z|^{\frac12-\ell}\right)\\
		\nonumber
		&= i\chi_{h,k}(iW)^{-\frac12} \frac{e^{-\frac{\pi ih}{6k}+\frac{\pi i}{6}-\frac{\pi i[-h]_k}{6k}-\frac{\pi i}{6K}}}{6k}\sum_{\nu=0}^{6K-1} (-1)^{\nu+\mu} e^{\pi i\left(-\frac{\nu-\mu}{3K}+\frac{[-H]_K\left(\nu-\mu+\frac12\right)^2}{K}\right)} \int_{-1}^1 e^{\frac{\pi}{6kZ}\left(1-x^2\right)}\\
		\nonumber
		&\times \left[\pd{\ell}{z}\left({\vphantom{\frac{6k}{6K}}}f_k(z;Z) \left(\cot\left(\tfrac{\pi}{6k}\left(x-\gcd(k,6)\left(\nu+\tfrac12\right)+\tfrac k2-\vp+3kz\right)\right)-\frac{6k\d_{k,\vp,\nu}}{\pi(x+\g_{k,\vp,\nu}+3kz)}\right)\right)\right]_{z=0} dx\\
		\label{E:durfeetaylorfirstterm}
		&\hspace{10cm}+ O\left(\log(k)|Z|^{\frac12-\ell}\right).
	\end{align}
	By the product formula, we have
	\begin{multline*}
		\left[\tfrac{\del^\ell}{\del z^\ell}\left(f_k(z;Z) \left(\cot\left(\tfrac{\pi}{6k}\left(x-\gcd(k,6)\left(\nu+\tfrac12\right)+\tfrac k2-\vp+3kz\right)\right) - \frac{6k\d_{k,\vp,\nu}}{\pi(x+\g_{k,\vp,\nu}+3kz)}\right)\right)\right]_{z=0}\\
		= (2\pi i)^\ell\sum_{0\le j\le\frac\ell2} \binom{\ell}{2j}\sum_{\substack{a,b,c\ge0\\a+b+c=j}} k^a\k(a,b,c)Z^{1-a-2c}2^{2j-\ell}\\
		\times \left(C_{\ell-2j}\left(\tfrac{1}{6k}\left(x-\gcd(k,6)\left(\nu+\tfrac12\right)+\tfrac k2-\vp\right)\right)- \d_{k,\vp,\nu}(-1)^{\frac\ell2-j}2(\ell-2j)! \left(\tfrac{3k}{\pi(x+\g_{k,\vp,\nu})}\right)^{\ell-2j+1}\right),
	\end{multline*}
	which, if substituted into \eqref{E:durfeetaylorfirstterm} and then \eqref{E:durfeecalEfulltaylor}, is the first term in Theorem \ref{T:durfeeTaylormainterm}.
	
	The remaining terms in Lemma \ref{lem:calv} are 
	\begin{multline*}
		\chi_{h,k}(iW)^{-\frac12}f_\frac k2(z;-Z)\left(e^{-\frac{\pi i}{6}\left(\frac{[-h]_k}{k}+\frac{i}{kZ}\right)} + \left( \frac{\calC^*\left(\frac{z}{iZ};\frac{[-H]_K}{K}+\frac{i}{kZ}\right)}{\eta\left(\frac{[-H]_K}{K}+\frac{i}{kZ}\right)} - e^{-\frac{\pi i}{6}\left(\frac{[-h]_k}{k}+\frac{i}{kZ}\right)}\right)\right)\\
		\times {\sum}_{m\in\Z+\frac12}^{**} (-1)^{m+\frac12}\erf\left(i(3Kz+\a_{H,K}-m)\sqrt{\frac{\pi}{KW}}\right)\\
		\times e^{\frac{\pi z}{Z}(m-\a_{H,K})-\frac{\pi}{KW}(m-\a_{H,K})^2+\pi i\left(\frac{[-H]_Km^2}{K}+\frac{H}{36K}-\frac{1}{3K}(m-\a_{H,K})\right)}.
	\end{multline*}
	Recall that the summands with $|m-\a_{H,K}|\le\frac{1}{\gcd(k,6)}$ are described by Lemma \ref{L:alphabadms} as $m\in\{\a_{H,K},\,\a_{H,K}\pm\frac{1}{\gcd(k,6)}\}$. As in the previous subsection, the term with $e^{-\frac{\pi i}{6}(\frac{[-h]_k}{k}+\frac{i}{kZ})}$ dominates while the rest exponentially decays. 
	
	In the case that $m=\a_{H,K}$, we have that $f_\frac k2(z;-Z)\erf(3Kiz\sqrt{\frac{\pi}{KW}})$ is an odd function of $z$, so does not contribute to the $\ell$-th Taylor coefficient.
	
	In the case that $m=\a_{H,K}\pm\frac{1}{\gcd(k,6)}$, which by Lemma \ref{L:alphabadms} occurs if $\gcd(k,12)\in\{1,2,3,6\}$, we simplify using $\gcd(k,6)^2KW=6kZ$ to get
	\begin{multline}\label{E:erfterms}
		\chi_{h,k} (iW)^{-\frac12}f_{\frac{k}{2}}(z;-Z)  (-1)^{\alpha_{H,K}\pm\frac{1}{\gcd(k,6)}+\frac12} \erf\left(i\left(3Kz\mp\frac{1}{\gcd(k,6)}\right)\sqrt{\frac{\pi }{KW}}\right)   \\
		\times e^{\pm\frac{\pi z}{\gcd(k,6)Z}+\pi i\left(-\frac{[-h]_k}{6k}+\frac{[-H]_K}{K}\left( \alpha_{H,K}\pm\frac{1}{\gcd(k,6)}\right)^2+\frac{H}{36K}\mp\frac{1}{3\gcd(k,6)K}\right)}.
	\end{multline}
	Recalling the Taylor series of $f_\frac k2(z;-Z)$ in \eqref{E:f_kdef} and using
	\begin{multline*}
		\left[\frac{\del^\ell}{\del z^\ell} \erf\left(i\left(3Kz\mp\frac{1}{\gcd(k,6)}\right)\sqrt{\frac{\pi}{KW}}\right)\right]_{z=0}\\
		=
		\begin{cases}
			\mp\frac{\sqrt{2}i}{\sqrt{3kZ}}\int_0^1 e^\frac{\pi t^2}{6kZ} dt & \text{if }\ell=0,\\
			\vspace{-.4cm}\\
			\frac{(\mp\pi)^{\ell-1}\sqrt{6k}i}{Z^{\ell-\frac12}}e^\frac{\pi}{6kZ}\sum_{0\le j\le\frac{\ell-1}{2}} \frac{(\ell-1)!}{j!(\ell-1-2j)!}\left(\frac{3kZ}{2\pi}\right)^j & \text{if }\ell\ge1,
		\end{cases}
	\end{multline*}
	we see that the $\ell$-th Taylor coefficient in $z$ of \eqref{E:erfterms} is
	\begin{multline*}
		(2\pi i)^\ell i^{-\frac12}(-1)^{\a_{H,K}\pm\frac{1}{\gcd(k,6)}+\frac12}e^{\pi i\left(-\frac{[-h]_k}{6k}+\frac{[-H]_K}{K}\left(\a_{H,K}\pm\frac{1}{\gcd(k,6)}\right)^2 + \frac{H}{36K}\mp\frac{1}{3\gcd(k,6)K}\right)}\sqrt{\gcd(k,6)}\chi_{h,k}\\
		\times \left(e^\frac{\pi}{6kZ}\sum_{\substack{j_1,j_3\ge0,j_2\ge1\\2j_1+j_2+j_3=\ell}} \sum_{\substack{a,b,c\ge0\\a+b+c=j_1}} \sum_{0\le j_4\le\frac{j_2-1}{2}} \frac{\ell!3^{j_4}k^{a+j_4+\frac12}}{(2j_1)!j_2\cdot j_3!j_4!(j_2-1-2j_4)!2^{\ell+a-2j_1+j_4}\gcd(k,6)^{j_3}\pi^{j_4+1}}\right.\\
		\left.\times \k(a,b,c)(-1)^{a}(\pm i)^{j_2+1}(\mp i)^{j_3}Z^{1-a-2c-j_2-j_3+j_4}\right.\\
		\left.\mp \int_0^1 e^{\frac{\pi}{6kZ}t^2} dt \sum_{\substack{j_1,j_3\ge0\\2j_1+j_3=\ell}} \sum_{\substack{a,b,c\ge0\\a+b+c=j_1}} \binom{\ell}{2j_1} \k(a,b,c)\frac{k^{a-\frac12}(-1)^{1+a}}{3\cdot2^{a+j_3}}\left(\frac{\pm i}{\gcd(k,6)}\right)^{j_3}Z^{-a-2c-j_3}\right).
	\end{multline*}
	The theorem follows.
\end{proof}

\section{The Circle method and the Proofs of Theorems \ref{thm:ul} and \ref{thm:u2}}\label{sec:circmeth}

\subsection{Proof of Theorem \ref{thm:ul}}

We are now ready to prove Theorem \ref{thm:ul}.

\begin{proof}[Proof of Theorem \ref{thm:ul}]
	By Lemma \ref{L:genfnsrewrite} (1), we have 
	\[
		u_\ell(n) = \text{coeff}_{[q^n]}\frac{1}{(2\pi i)^\ell}\left[\frac{\del^\ell}{\del z^\ell} \left(q^{-\frac{1}{24}}\calU_1(z;\t)+H_1(\z;q)\right)\right]_{z=0}.
	\]
	The contribution of $H_1$ to $\sum_{m\in\Z}m^ku(m,n)$ can just be calculated directly. Indeed, if
	\[
		\frac{1}{(2\pi i)^\ell}\left[\frac{\del^\ell}{\del z^\ell} H_1(\z;q)\right]_{z=0} =: \sum_{n\ge0} a(n)q^n,
	\]
	then $a(n)\ll n^\frac{\ell-1}{2}$, so this piece goes into the error term. Write
	\[
		\calU_\ell(\t) := \frac{1}{(2\pi i)^{\ell}}\left[\frac{\partial^\ell}{\partial z^\ell} \calU_1(z;\t)\right]_{z=0} =: \sum_{n\ge0} a_\ell(n)q^{n+\frac{1}{24}}.
	\]
	With $Z=\frac kn-ik\TH$, $N=\flo{\sqrt{n}}$, $\vth_{h,k}':=\frac{1}{k(k_1+k)}$, and $\vth_{h,k}'':=\frac{1}{k(k_2+k)}$, where $\frac{h_1}{k_1}<\frac hk<\frac{h_2}{k_2}$ are three consecutive fractions in the Farey sequence of order $N$, we write
	\[
		a_\ell(n) = \sum_{\substack{0\le h<k\le N\\\gcd(h,k)=1}} e^{-\frac{2\pi i}{k}\left(n+\frac{1}{24}\right)h} \int_{-\vth_{h,k}'}^{\vth_{h,k}''} \calU_\ell\left(\frac hk+\frac{iZ}{k}\right)e^{\frac{2\pi\left(n+\frac{1}{24}\right)Z}{k}} d\TH.
	\]
	We now use Theorem \ref{T:unrestrictedtaylormainterm}.  The contribution of the error term may be bounded against
	\[
		\sum_{\substack{0\le h<k\le N\\\gcd(h,k)=1}} \log(k)\int_{-\vth_{h,k}'}^{\vth_{h,k}''} |Z|^{\frac12-\ell}e^{\frac{2\pi\left(n+\frac{1}{24}\right)\re(Z)}{k}} d\TH \ll \sum_{1\le k\le N} k\log(k)\int_{-\vth_{h,k}'}^{\vth_{h,k}''} |Z|^{\frac12-\ell}e^{2\pi\left(1+\frac{1}{24n}\right)} d\TH.
	\]
	Recall that $\vth_{h,k}',\vth_{h,k}''\asymp\frac{1}{kN}$. For $\ell=0$, we use that $|Z|^\frac12\ll(k|\TH|)^\frac12\ll n^{-\frac14}$ to bound the above against
	\[
		\ll n^{-\frac14}\frac1N\sum_{1\le k\le N} \log(k) \ll \log(N)n^{-\frac14} \ll \log(n)n^{-\frac14}.
	\]
	For $\ell\ge2$, we employ $|Z|^{\frac12-\ell}\le\frac{n^{\ell-\frac12}}{k^{\ell-\frac12}}$ and bound the error term as
	\[
		\ll \frac{n^{\ell-\frac12}}{N}\sum_{1\le k\le N} \frac{\log(k)}{k^{\ell-\frac12}} \ll \frac{n^{\ell-\frac12}}{N} \ll n^{\ell-\frac14} \ll n^{\frac{5\ell}{4}-\frac12}.
	\]

	Define the Kloostermann sums
	\begin{equation}\label{E:KloostermannUnrestricted}
		K_k(n,m) := i^\frac32(-1)^m\sum_{\substack{0\le h<k\\\gcd(h,k)=1}} \chi\mat{[-h]_k&-\frac{h[-h]_k+1}{k}\\k&-h}^{-1}e^{\frac{\pi i}{12k}(-(24n+1)h+(12m(m+1)+1)[-h]_k)}.
	\end{equation}
	Then the main term becomes
	\begin{multline*}\label{E:circlemethodalmostdone}
		\frac{1}{4\sqrt{6}}\sum_{\substack{1\le k\le N\\0\le\nu\le2k-1}} \frac{K_k(n,\nu)}{k}\sum_{0\le j\le\frac\ell2} \binom{\ell}{2j}\sum_{\substack{a,b,c\ge0\\a+b+c=j}} k^a\k(a,b,c)\\
		\times \int_{-1}^1 C_{\ell-2j}\left(\frac{1}{2k}\left(\frac{x}{\sqrt{6}}-\nu-\frac12\right)\right) \int_{-\vth_{h,k}'}^{\vth_{h,k}''} Z^{\frac12-a-2c} e^{\frac{\pi}{6kZ}\left(1-x^2\right)+\frac{2\pi\left(n+\frac{1}{24}\right)Z}{k}} d\TH dx.
	\end{multline*}
	The claim now follows using Lemma \ref{L:BesselBound} with $\nu=a+2c-\frac12$, $A=\frac{2\pi(n+\frac{1}{24})}{k}$, $B=\frac{\pi(1-x^2)}{6k}$, $\vth_1=\vth_{h,k}''$, and $\vth_2=\vth_{h,k}'$.
\end{proof}

\subsection{Proof of Theorem \ref{thm:u2}}

We are now ready to prove Theorem \ref{thm:u2}.

\begin{proof}[Proof of Theorem \ref{thm:u2}]
	By Lemma \ref{L:genfnsrewrite} (2), we have 
	\[
		v_\ell(n) = \coeff_{\left[q^n\right]}\frac{1}{(2\pi i)^\ell}\left[\frac{\del^\ell}{\del z^\ell} \left(q^{-\frac14}\calV_1(z;\t)+H_2(\z;q)\right)\right]_{z=0}.
	\]
	Again, $H_2(\z;q)$ only contributes to the error term. Write 
	\[
		\calV_\ell(\t) := \frac{1}{(2\pi i)^\ell}\left[\frac{\del^\ell}{\del z^\ell} \calV_1(z;\t)\right]_{z=0} =: \sum_{n\ge0} b_\ell(n)q^{n+\frac14}.
	\]
	With the same setup and notation as above, we have
	\begin{equation}\label{E:bellcircle}
		b_\ell(n) = \sum_{\substack{0\le h<k\le N\\\gcd(h,k)=1}} e^{-\frac{2\pi ih}{k}\left(n+\frac14\right)} \int_{-\vth_{h,k}'}^{\vth_{h,k}''} \calV_\ell\left(\frac hk+\frac{iZ}{k}\right)e^{\frac{2\pi\left(n+\frac14\right)Z}{k}} d\TH.
	\end{equation}
	Now set $h=\gcd(k,6)\mu+\vp$ with $0\le\vp<\gcd(k,6)$. Then we can rewrite the sum above as
  	\[
		\sum_{\substack{0\le h<k\le N\\\gcd(h,k)=1}} = \sum_{\substack{1\le k\le N\\0\le\vp\le\gcd(k,6)-1\\\gcd(\vp,\gcd(k,6))=1}} \sum_{\substack{0\le\mu\le K-1\\\gcd(k,h)=1}}.
	\]
	Continuing to write $h=\gcd(k,6)\mu+\vp$, we set
	\begin{align}\nonumber
		\calK_{k,\vp,\nu}(n) &:= i^\frac12 e^{\frac{\pi i}{6}}(-1)^{\nu}\sum_{\substack{0\le\mu\le K-1\\\gcd(k,h)=1}} (-1)^\mu\chi_{h,k}\\
		\label{E:DurfeeKloostermann}
		&\hspace{1.8cm}\times e^{\frac{\pi i}{6k}\left(-(12n+4)h-[-h]_k+\gcd(k,6)(2(\mu-\nu)-1)+6\gcd(k,6)\left(\nu-\mu+\tfrac12\right)^2[-H]_K\right)},\\
		\nonumber
		\calK_{k,\vp,j_2,j_3,a}^{\pm,[1]}(n) &:= i^\frac32e^{\mp\frac{\pi i}{3k}}(-1)^{a+\frac12\pm\frac{1}{\gcd(k,6)}}(\pm i)^{j_2+1}(\mp i)^{j_3}\sum_{\substack{0\le\mu\le K-1\\\gcd(k,h)=1}} (-1)^{\a_{H,K}}\chi_{h,k}\\
		\label{E:erfDurfeeKloostermann1}
		&\hspace{3cm}\times e^{\frac{\pi i}{6k}\left(-2(6n+1)h-[-h]_k+6\gcd(k,6)\left(\a_{H,K}\pm\frac{1}{\gcd(k,6)}\right)^2[-H]_K\right)},\\
		\nonumber
		\calK_{k,\vp,j_3,a}^{\pm,[2]}(n) &:= i^\frac32e^{\mp\frac{\pi i}{3k}}(-1)^{a+\frac12\pm\frac{1}{\gcd(k,6)}}(\pm i)^{j_3}\sum_{\substack{0\le\mu<K-1\\\gcd(k,h)=1}} (-1)^{\a_{H,K}}\chi_{h,k}\\
		\label{E:erfDurfeeKloostermann2}
		&\hspace{3cm}\times e^{\frac{\pi i}{6k}\left(-2(6n+1)h-[-h]_k+6\gcd(k,6)\left(\a_{H,K}\pm\frac{1}{\gcd(k,6)}\right)^2[-H]_K\right)}.
	\end{align}
	 We plug Theorem \ref{T:durfeeTaylormainterm} into \eqref{E:bellcircle} and conclude Theorem \ref{thm:u2} as in the previous subsection.
\end{proof}

\section{Outlook}\label{sec:outlook}

We demonstrate with two examples techniques for proving precise asymptotic series for Taylor coefficients of functions $F(\z;q)$ that factor as in \eqref{E:genfnfactor}. There are surely many more examples for which a similar analysis is possible. Notable among them are ranks for strongly unimodal sequences (in which the inequalities \eqref{E:unimodaldef} are strict) and ranks for overpartitions. The relevant generating functions are essentially mock Jacobi forms and are, respectively, \cite[equation (2.1)]{BJSMR} given by
\begin{align*}
	U^*(\zeta;q)=-\frac{1}{(1+\zeta^{-1})(q;q)_{\infty}}\left(\sum_{n \in \mathbb{Z} \setminus \{0\}} \frac{(-1)^nq^{\frac{n(3n+1)}{2}}}{1+\zeta q^n}+\sum_{n \in \mathbb{Z} \setminus \{0\}} \frac{q^{\frac{n(n+1)}{2}}\zeta^{-n}}{1+\zeta q^n}\right),
\end{align*}
and \cite[equation (1.3)]{BringmannLovejoy}
\begin{align*}
	\mathcal{O}(\zeta;q)=\frac{(-q;q)_{\infty}}{(q;q)_{\infty}}\left(1+2\sum_{n \geq 1} \frac{(-1)^n(1-\zeta)\left(1-\zeta^{-1}\right)q^{n^2+n}}{(1-\zeta q^n)\left(1-\zeta^{-1}q^n\right)} \right).
\end{align*}
Other examples depending on false theta functions include the companions to Caparelli's identities found by the second author and Mahlburg \cite[Theorem 1.1]{KMCaparelli}.

Finally, let $\calU(n)$ be the set of unimodal sequences with $u(n):=\#\calU(n)$, and define $\calV(n)$ and $v(n)$ for Durfee unimodal sequences similarly. The second author, Jennings-Shaffer, and Mahlburg used the method of moments to conclude limiting logistic distributions \cite[Proposition 1.2]{BJSM},
$$
	\lim_{n \to \infty} \frac{\#\left\{\sigma \in \mathcal{U}(n) : \frac{\mathrm{rank}(\sigma)}{\sqrt{3n}}\leq x \right\}}{u(n)}=\lim_{n \to \infty} \frac{\#\left\{\sigma \in \mathcal{V}(n) : \frac{\mathrm{rank}(\sigma)}{\sqrt{3n}}\leq x \right\}}{v(n)}=\frac{1}{1+e^{-\pi x}}.
$$
We leave it as an open problem to determine convergence rates above.

\end{document}